\documentclass[11pt, reqno]{amsart}
\usepackage{amsmath, amsthm, amscd, amsfonts, amssymb, graphicx, color, mathtools, mathrsfs}
\usepackage[bookmarksnumbered, colorlinks, plainpages]{hyperref}
\usepackage{enumerate}
\usepackage{setspace}
\usepackage{multicol}
\usepackage[margin=1in]{geometry}
\usepackage{comment}
\usepackage{booktabs}
\usepackage{caption}
\usepackage{subcaption}
\usepackage{dsfont}
\usepackage{tikz-cd}
\usepackage{algorithmic}
\usepackage{pgfplots}

\usepackage{dsfont}
\usepackage{tikz-cd}
\usepackage{algorithmic}

\usepackage[normalem]{ulem}
\newcommand\tsout{\bgroup\markoverwith{\textcolor{red}{\rule[0.5ex]{2pt}{1.4pt}}}\ULon}
\newcommand{\stkout}[1]{\ifmmode\text{\tsout{\ensuremath{#1}}}\else\tsout{#1}\fi}
\allowdisplaybreaks

\theoremstyle{definition}
\newtheorem{theorem}{Theorem}[section]
\newtheorem{lemma}[theorem]{Lemma}
\newtheorem{proposition}[theorem]{Proposition}

\newtheorem{definition}[theorem]{Definition}

\numberwithin{equation}{section}

\newcommand{\curl}{\mathrm{curl}\,}

\newcommand{\divg}{\mathrm{div}\,}
\newcommand{\bff}{\boldsymbol}
\newcommand{\bb}{\mathbb}

\newcommand{\du}{\mathrm{d}\bff{u}}
\newcommand{\dB}{\mathrm{d}\bff{B}}
\newcommand{\dt}{\,\mathrm{d}t}

\newcommand{\dx}{\,\mathrm{d}x}
\newcommand{\ds}{\,\mathrm{d}s}

\newcommand{\dbeta}{\mathrm{d}\beta}

\newcommand{\norm}[2]{\left\|{#1}\right\|_{#2}}
\newcommand{\inpro}[2]{\left\langle#1,#2\right\rangle}

\newcommand{\abs}[1]{\left|{#1}\right|}

\newcommand{\hdiv}{\mathbb{H}(\mathrm{div})}
\newcommand{\hcurl}{\mathbb{H}(\mathrm{curl})}
\newcommand{\hzerodiv}{\mathbb{H}_0(\mathrm{div})}
\newcommand{\hzerocurl}{\mathbb{H}_0(\mathrm{curl})}
\newcommand{\curlh}{\mathrm{curl}_h\,}

\begin{document}
\setcounter{page}{1}

\title[Stochastic Hall--MHD: weak solutions via a structure-preserving FEM]{Stochastic resistive Hall--MHD with current fluctuations: martingale weak solutions via a convergent structure-preserving finite element method}

\author[Agus L. Soenjaya]{Agus L. Soenjaya}
\address{Institute of Analysis and Scientific Computing, TU Wien, Wiedner Hauptstrasse 8--10, 1040 Vienna, Austria}
\email{\textcolor[rgb]{0.00,0.00,0.84}{agus.soenjaya@asc.tuwien.ac.at}}

\thanks{\textbf{Acknowledgment.} This research was initiated while the author was supported by the Commonwealth through an Australian Government Research Training Program (RTP) Scholarship (\href{https://doi.org/10.82133/C42F-K220}{DOI: 10.82133/C42F-K220}). The author has also been supported by an Australian Mathematical Society Lift-off Fellowship. The work was completed during the author's postdoctoral appointment at the Institute of Analysis and Scientific Computing of TU Wien, supported by the Austrian Science Fund (FWF) through the international project I6802 ``Functional error estimates for PDEs on unbounded domains'' (\href{https://doi.org/10.55776/I6802}{DOI: 10.55776/I6802}), led by Prof. Dirk Praetorius.}

\begin{abstract}
We study the stochastic resistive Hall--magnetohydrodynamic (Hall--MHD) system on bounded convex polyhedral domains, subject to nonlinear perfectly conducting boundary conditions. The system is driven by multiplicative Gaussian forcing in the momentum equation together with structured $\curl$-type noise in the induction equation arising from stochastic perturbations of the resistive and Hall parts in the generalised Ohm law. We develop a fully discrete, linearly implicit, structure-preserving mixed finite element method based on a compatible discrete de Rham complex, which preserves the magnetic Gauss law exactly. We prove subsequential convergence of the discrete solutions to a finite-energy weak martingale solution, thereby obtaining a constructive existence result for the stochastic Hall--MHD system with $\curl$-type noise. Numerical experiments illustrate the stochastic dynamics, exact divergence preservation, and convergence of the method.
\end{abstract}
\maketitle


\section{Introduction}
Magnetohydrodynamics (MHD) provides a continuum description of the interaction between the motion of an electrically conducting fluid, such as a plasma or a liquid metal, and an electromagnetic field. In the incompressible resistive regime, the MHD system couples the Navier--Stokes equations for the fluid velocity with the Maxwell equations for the electromagnetic variables, closed by a constitutive Ohm law; see, for example, \cite{Dav17, SerTem83}.
Stochastic MHD models incorporate random external forcing and unresolved fluctuations arising from turbulent or imperfectly observed processes. The resulting equations provide a probabilistic description of the possible fluid and magnetic-field evolutions and have been studied extensively from the viewpoint of well-posedness and ergodicity~\cite{BarPra07, Mot22, SriSun99,  Yam19}, often with simplified boundary conditions.

Classical single-fluid MHD is appropriate when the ion and electron components of the plasma move approximately together. At length scales smaller than the ion inertial length, however, the relative drift between the ion and electron velocities becomes significant. This is highly relevant to small-scale plasma dynamics, including magnetic reconnection and solar plasmas~\cite{SimCha08}. The resulting Hall effect is incorporated through a strongly nonlinear term in the generalised Ohm law, having the same differential order as magnetic diffusion. Consequently, Hall--MHD is a strongly coupled quasilinear parabolic-dispersive system subject to geometric constraints~\cite{AchDegFroLiu11, ChaDegLiu14, Dai21}.

\subsection{Problem introduction}

In this work, we consider a stochastic resistive Hall--MHD model on a bounded convex polyhedral domain $\mathscr{D}\subset\mathbb{R}^3$ over a time interval $[0,T]$. Randomness enters the momentum equation through a multiplicative stochastic force, and the induction equation through a multiplicative curl-type stochastic electromotive contributions. More precisely, we study
\begin{subequations}\label{equ:stoch mhd}
	\begin{alignat}{2}
		&\du+\big[-\nu\Delta\bff{u}+(\bff{u}\cdot\nabla)\bff{u}+\nabla p-\bff{J}\times\bff{B}\big]\dt
		=
		\bff{f}\dt+\sum_{i=1}^{\infty}\bff{g}_i(\bff{u},\bff{B})\,\dbeta_i^u(t)
		\; && \quad\text{in }(0,T)\times\mathscr{D},
		\label{equ:stoch mhd u}
		\\
		&\dB+\curl\bff{E}\,\dt
		=
		-\sum_{i=1}^{\infty}\curl\bigl(\sigma\bff{\chi}_i+\eta\bff{\chi}_i\times\bff{B}\bigr)\,\dbeta_i^B(t)
		\; && \quad\text{in }(0,T)\times\mathscr{D},
		\label{equ:stoch mhd B}
		\\[1ex]
		&\bff{E}= \sigma\bff{J}+\eta\bff{J}\times\bff{B} -\bff{u}\times\bff{B}
		\; && \quad\text{in }(0,T)\times\mathscr{D},
		\label{equ:stoch mhd ohm}
		\\[1ex]
		&\bff{J}-\curl\bff{B}=\bff{0}
		\; && \quad\text{in }(0,T)\times\mathscr{D},
		\label{equ:stoch mhd ampere}
		\\[1ex]
		&\divg\bff{u}=\divg\bff{B}=0
		\; && \quad\text{in }(0,T)\times\mathscr{D},
		\label{equ:stoch mhd div}
		\\[1ex]
		&\bff{u}(0)=\bff{u}_0,\qquad \bff{B}(0)=\bff{B}_0
		\; && \quad\text{in }\mathscr{D},
		\label{equ:stoch mhd init}
		\\[1ex]
		&\bff{u}=\bff{0},\qquad \bff{B}\cdot\bff{n}=0,\qquad \bff{E}\times\bff{n}=\bff{0}
		\; && \quad\text{on }\partial\mathscr{D}.
		\label{equ:stoch mhd boundary}
	\end{alignat}
\end{subequations}
Here, $\bff{u}$ and $p$ denote the bulk-fluid velocity and pressure, while $\bff{B}$, $\bff{E}$, and $\bff{J}$ denote the magnetic induction, electric field, and current density, respectively. The parameters $\nu>0$, $\sigma>0$, and $\eta\geq0$ are the kinematic viscosity, electrical resistivity, and Hall coefficient, respectively. The processes $\{\beta_i^u\}_{i\in \bb{N}}$ and $\{\beta_i^B\}_{i\in \bb{N}}$ are independent families of real-valued Brownian motions on a filtered probability space. The coefficients $\bff{g}_i(\bff{u},\bff{B})$ in \eqref{equ:stoch mhd u} describe a state-dependent random force acting on the bulk plasma momentum. The conditions depicted in \eqref{equ:stoch mhd boundary} are the so-called perfectly conducting boundary conditions, which are also nonlinear (in $\bff{B}$) in our case.

The magnetic noise in \eqref{equ:stoch mhd B} is introduced at the level of the electric-field closure. For fixed $\bff{B}$, the current-dependent part of the deterministic Ohm law is the linear map $\bff{j}\mapsto \sigma\bff{j}+\eta\bff{j}\times\bff{B}$.
The prescribed fields $\{\bff{\chi}_i\}_{i\in \bb{N}}$ determine current-like spatial directions in which this constitutive response is randomly perturbed. Accordingly, the $i$th stochastic electric-field mode is chosen as
\[
\bff{\Xi}_i(\bff{B})
:=
\sigma\bff{\chi}_i
+\eta\bff{\chi}_i\times\bff{B}.
\]
The first component represents a fluctuating resistive contribution, while the second has the structure of a fluctuating Hall or electron-drift contribution. This ansatz may thus be viewed as a stochastic parameterisation of unresolved electromotive effects in the generalised Ohm law~\eqref{equ:stoch mhd ohm}. Such current fluctuation is motivated by the importance of
small-scale current and Hall effects in turbulent plasmas; see, e.g.,
\cite{KanVasCer06, MinAlePou07, MinGomMah03}.

Since both the deterministic and stochastic electric-field contributions enter the induction equation under the $\curl$ operator, the magnetic solenoidality constraint is propagated formally:
\[
\divg\bff{B}_0=0
\quad\Longrightarrow\quad
\divg\bff{B}(t)=0, \quad \forall t\in [0,T],
\]
an important physical requirement depicting the absence of magnetic monopoles. Preserving this magnetic solenoidality constraint (Gauss' law) is central to the numerical method developed in this paper.
The fluid pressure enforces an independent incompressibility constraint $\divg\bff{u}=0$.

\subsection{Literature review}

Despite its physical importance, the mathematical literature on the Hall--MHD system is relatively scarce. Mathematical analysis of \emph{deterministic} Hall--MHD with periodic boundary was initiated in \cite{AchDegFroLiu11}, where a kinetic formulation and global weak solutions were established. This is further extended by~\cite{Tan21} to include the case of perfectly conducting boundary and density-dependent viscosity and resistivity. Local well-posedness and small-data global results for strong solutions on $\bb{R}^3$ were subsequently investigated in~\cite{ChaDegLiu14, Dai21}; see also the references therein. 

In the \emph{stochastic} setting, \cite{Yam17} established global martingale solutions for three- and two-and-a-half-dimensional Hall--MHD driven by Gaussian multiplicative noise, while~\cite{Mot23} treated the problem on $\bb{R}^3$ using stochastic compactness in non-metrisable path spaces. These results demonstrate that global martingale solutions are a natural solution concept in three dimensions, where uniqueness in the finite-energy class remains unavailable. Notably, the case of curl-type multiplicative noise with perfectly conducting (nonlinear) boundary conditions has never been studied before.

On the numerical aspect, there is a rather extensive mathematical literature on stable, convergent, and structure-preserving numerical methods for \emph{deterministic} MHD \emph{without} the Hall term. Compatible mixed finite element methods have been developed to preserve magnetic Gauss' law, discrete energy identities, and other invariants; see, e.g., \cite{VeiGomPerZam26, VeiHuMas25, HuMaXu17, HipLiMaoZhe18, MaoXi25}. Rigorous numerical analysis for \emph{deterministic} Hall--MHD is considerably more limited. In~\cite{LaaHuFar23}, a structure-preserving and helicity-conserving finite element scheme for the stationary resistive Hall--MHD, together with well-posed linearisation and solver analysis, was developed. The work~\cite{GolSoeTra26} proposed and analysed a structure-preserving mixed finite element scheme for the non-stationary case.

By contrast, rigorous numerical analysis for \emph{stochastic} MHD remains extremely limited. To the best of our knowledge, \cite{DeuFotTch25} is the only existing work that develops a finite element approximation for the standard stochastic MHD system with multiplicative Gaussian noise and proves convergence to a weak martingale solution. However, the scheme considered there is nonlinear and does not preserve the magnetic Gauss law exactly at the discrete level. No previous work appears to have established convergence of a fully discrete structure-preserving finite element method for stochastic MHD. In particular, there is no corresponding convergence analysis for stochastic Hall--MHD, nor for models driven by current-type stochastic electromotive forcing. The present work therefore provides, to the best of our knowledge, the first convergence result of this kind.

\subsection{Contributions of this paper}

The purpose of this work is twofold. First, we establish, constructively, the existence of a weak martingale solution to the stochastic resistive Hall--MHD system with $\curl$-type Gaussian current noise on bounded convex polyhedra. Second, we develop a structure-preserving mixed finite element method for approximating such solutions. The scheme is linearly implicit and preserves the magnetic Gauss law exactly at the discrete level. It is based on compatible finite element spaces forming a discrete de Rham complex. The velocity--pressure pair is chosen to satisfy the discrete incompressibility condition, while the electric field, current density, and magnetic induction are approximated in suitable conforming $\hcurl$- and $\hdiv$-spaces.

Convergence of the scheme to a weak martingale solution is established in several steps. First, we derive moment bounds for the discrete variables. Using these estimates, stochastic time-translate bounds, and compactness arguments, we prove tightness of laws of the discrete solutions in suitable path spaces. The Jakubowski--Skorokhod representation theorem then yields almost-sure convergence on a new probability space. We subsequently identify the limits and show that every convergent subsequence determines a weak martingale solution of \eqref{equ:stoch mhd}. Consequently, the numerical approximation provides a constructive existence proof for a weak martingale solution satisfying the perfectly conducting boundary conditions in the appropriate trace sense. Since uniqueness of finite-energy martingale solutions is not known, the convergence result is subsequential, and no convergence rate is asserted.

The remainder of the paper is organised as follows. Section~\ref{subsec:notation} introduces the relevant notation and function spaces. In Section~\ref{subsec:assumptions-noise}--\ref{subsec:weak mart sol}, we present the assumptions on the data and noise coefficients, as well as the definition of a weak martingale solution. Section~\ref{sec:fem} introduces the compatible finite element spaces and the fully discrete scheme, together with its solvability, stability, and structure-preserving properties. Section~\ref{sec:stoch convergence} proves tightness of the discrete laws and subsequential convergence to a weak martingale solution. Numerical experiments are reported in Section~\ref{sec:numerical experiments}. Auxiliary inequalities and results on time-increment estimates are gathered in the appendix.

\section{Preliminaries}

\subsection{Notation}\label{subsec:notation}

Throughout this paper, $\mathscr{D}\subset\mathbb{R}^3$ denotes a bounded convex contractible polyhedral domain with boundary $\partial\mathscr{D}$ and outward unit normal $\bff{n}$. We fix a final time $T>0$.

For $p\in [1,\infty]$ and $k\in\mathbb{N}_0$, we denote by
\[
L^p:=L^p(\mathscr{D}),
\qquad
W^{k,p}:=W^{k,p}(\mathscr{D})
\]
the usual scalar-valued Lebesgue and Sobolev spaces, respectively. Their
$\mathbb{R}^3$-valued counterparts are denoted by
\[
\bb{L}^p:=L^p(\mathscr{D};\mathbb{R}^3),
\qquad
\bb{W}^{k,p}:=W^{k,p}(\mathscr{D};\mathbb{R}^3).
\]
We write
\[
H^k:=W^{k,2},
\qquad
\bb{H}^k:=\bb{W}^{k,2}.
\]
The spaces $H_0^1$ and $\bb{H}_0^1$ consist of scalar- and
vector-valued $H^1$-functions, respectively, with vanishing trace on
$\partial\mathscr{D}$. For $m\in \bb{N}$, we denote by $\widetilde{\bb{H}}^{-m}$ the dual of $\bb{H}^m$. We also set
\[
L_0^2
:=
\left\{
q\in L^2:
\int_{\mathscr{D}}q\,\dx=0
\right\}.
\]
The space $\mathcal D'((0,T)\times\mathscr D)$ denotes the space of distributions on $(0,T)\times\mathscr D$.

The differential operators $\nabla$, $\divg$, $\curl$, and $\Delta$ are
understood in the distributional sense whenever the functions involved are
not smooth, with $\Delta$ acting componentwise on vector fields. We abbreviate
$\partial/\partial x_j$ by $\partial_j$ and denote the time derivative by
$\partial_t$.

The standard electromagnetic energy spaces are
\begin{align*}
\hcurl
&:=
\left\{
\bff{v}\in\bb{L}^2:
\curl\bff{v}\in\bb{L}^2
\right\},
\\
\hdiv 
&:=
\left\{
\bff{v}\in\bb{L}^2:
\divg\bff{v}\in L^2
\right\}.
\end{align*}
They are respectively equipped with the graph norms
\begin{align*}
\norm{\bff{v}}{\hcurl}^2
&:=
\norm{\bff{v}}{\bb{L}^2}^2
+
\norm{\curl\bff{v}}{\bb{L}^2}^2,
\\
\norm{\bff{v}}{\hdiv}^2
&:=
\norm{\bff{v}}{\bb{L}^2}^2
+
\norm{\divg\bff{v}}{L^2}^2.
\end{align*}
The corresponding spaces with homogeneous tangential and normal traces are, respectively,
\begin{align*}
\hzerocurl
&:=
\left\{
\bff{v}\in\hcurl:
\bff{v}\times\bff{n}=\bff{0}
\text{ on }\partial\mathscr{D}
\right\},
\\
\hzerodiv
&:=
\left\{
\bff{v}\in\hdiv:
\bff{v}\cdot\bff{n}=0
\text{ on }\partial\mathscr{D}
\right\},
\end{align*}
where the boundary conditions are understood in the appropriate trace sense.

We introduce the smooth solenoidal velocity test space
\[
\mathcal D_\sigma
:=
\left\{
\bff v\in C_c^\infty(\mathscr D;\bb R^3):
\divg\bff v=0
\right\},
\]
and define
\begin{align}\label{equ:V sigma}
	\bb L_\sigma^2
	:=
	\overline{\mathcal D_\sigma}^{\,\bb L^2},
	\qquad
	\bb V_\sigma
	:=
	\overline{\mathcal D_\sigma}^{\,\bb H^1}.
\end{align}
For bounded Lipschitz domains, these spaces admit the
characterisations~\cite{Tem01}:
\[
\bb L_\sigma^2
=
\left\{
\bff v\in\bb L^2:
\inpro{\bff v}{\nabla q}=0
\;\text{ for every }q\in H^1
\right\}
=
\left\{
\bff v\in\bb L^2:
\divg\bff v=0,\ 
\bff v\cdot\bff n=0
\right\},
\]
where the divergence and normal trace are understood in the weak sense, and $\bb V_\sigma= \bb H_0^1\cap\bb L_\sigma^2$.

For the magnetic test space, we similarly set
\[
\mathcal D_B
:=
C^\infty(\overline{\mathscr D};\bb R^3)
\cap\hzerodiv,
\]
and define
\begin{align}\label{equ:V B}
	\bb V_B
	:=
	\overline{\mathcal D_B}^{\,\bb W^{1,3}}.
\end{align}

For $\bff{v},\bff{w}\in\bb{L}^2$, we write
\[
\inpro{\bff{v}}{\bff{w}}
:=
\int_{\mathscr{D}}
\bff{v}\cdot\bff{w}\,\dx.
\]
The same notation is used for the $L^2$-inner product of matrix-valued
functions, with the Euclidean scalar product replaced by the Frobenius
product. More generally, the inner product in a Hilbert space $H$ is denoted
by $\inpro{\cdot}{\cdot}_H$, and its corresponding norm by
$\norm{\cdot}{H}$. If $X$ is a Banach space, its topological dual is denoted
by $X'$, and the duality pairing between $X'$ and $X$ is written as 
$\inpro{\cdot}{\cdot}_{X',X}$.

For a Banach space $X$ and $1\leq p\leq\infty$, we use the abbreviations
\[
L_T^p(X):=L^p(0,T;X),
\qquad
W_T^{k,p}(X):=W^{k,p}(0,T;X),
\]
for the usual Bochner--Lebesgue and Bochner--Sobolev spaces, respectively.
We denote by $X_{\mathrm{w}}$ the space $X$ endowed with its weak topology.
Accordingly, $C([0,T];X_{\mathrm{w}})$
denotes the space of weakly continuous $X$-valued functions on $[0,T]$.
For $p\in [1,\infty)$, the notation $L_T^p(X)_{\mathrm{w}}$
refers to $L_T^p(X)$ endowed with its weak topology. Similarly, $L_T^\infty(X)_{\mathrm{w}^\ast}$
denotes $L_T^\infty(X)$ endowed with its weak-$\ast$ topology. When no
topological subscript is displayed, the corresponding space is understood
to carry its usual strong topology.

Let $\mathfrak{S}
=
\bigl(
\Omega,\mathscr{F},
\{\mathscr{F}_t\}_{t\in[0,T]},
\bb{P}
\bigr)$ be a stochastic basis satisfying the usual conditions. The expectation with respect to $\bb{P}$ is denoted by $\bb{E}$. For a Banach space $X$, the space $L^p(\Omega;X)$ consists of strongly measurable $X$-valued random variables $Y$ such that $\bb{E}\bigl[\norm{Y}{X}^p\bigr]<\infty$. The families $\{\beta_i^u\}_{i\in \bb{N}}$ and $\{\beta_i^B\}_{i\in \bb{N}}$ denote families of real-valued standard Brownian motions adapted to
$\{\mathscr{F}_t\}_{t\in[0,T]}$. Any independence assumptions between these
families will be stated explicitly.

For the temporal discretisation, let $N\in\mathbb{N}$, and
\begin{equation}\label{equ:tau}
\tau:=\frac{T}{N},
\qquad
t_n:=n\tau,
\qquad n=0,\ldots,N.
\end{equation}
The Brownian increments are denoted by
\begin{equation}\label{equ:Brownian increment}
\Delta_n\beta_i^u
:=
\beta_i^u(t_n)-\beta_i^u(t_{n-1}),
\qquad
\Delta_n\beta_i^B
:=
\beta_i^B(t_n)-\beta_i^B(t_{n-1}).
\end{equation}
The finite element spaces, discrete differential operators, projections, and
temporal interpolants are introduced in Section~\ref{sec:fem}.

Finally, $C>0$ denotes a generic deterministic constant whose value may
change from line to line. Unless stated otherwise, $C$ is independent of the
mesh size $h$, the time-step size $\tau$, and the number of time steps $N$.
Dependence on relevant parameters is indicated by subscripts, for example
$C_{p,T}$.

\subsection{Assumptions on the data and the noise}
\label{subsec:assumptions-noise}

Let $(\Omega,\mathscr{F},\{\mathscr{F}_t\}_{t\in[0,T]},\bb{P})$ be a stochastic basis satisfying the usual conditions. Let $\{\beta_i^u\}_{i\in\bb{N}}$ and $\{\beta_i^B\}_{i\in\bb{N}}$ be two independent families of real-valued Brownian motions. We assume that $\bff{f}\in L^2(0,T;\bb{L}^2)$ is a deterministic function.

Next, we assume that
\begin{align}\label{equ:chi assumption}
	\bff{\chi}_i\in \bb{W}^{1,\infty}_0(\mathscr{D}),\qquad
	\sum_{i=1}^\infty \norm{\bff{\chi}_i}{\bb{W}^{1,\infty}}^2<\infty.
\end{align}
The condition $\bff{\chi}_i=\bff{0}$ on $\partial\mathscr{D}$ ensures that $\bff{\Xi}_i(\bff{B})\in\hzerocurl$ whenever $\bff{B}$ is sufficiently regular.

The velocity-noise coefficients satisfy boundedness and Lipschitz conditions: there exist positive constants $C$ and $L$ such that
\begin{align}
	\label{equ:g growth}
	\sum_{i=1}^\infty \norm{\bff{g}_i(\bff{v},\bff{C})}{\bb{L}^2}^2
	&\leq
	C\bigl(1+\norm{\bff{v}}{\bb{L}^2}^2+\norm{\bff{C}}{\bb{L}^2}^2\bigr), \quad \forall \bff{v},\bff{C}\in \bb{L}^2,
	\\
	\label{equ:g lipschitz}
	\sum_{i=1}^\infty \norm{\bff{g}_i(\bff{v}_1,\bff{C}_1)-\bff{g}_i(\bff{v}_2,\bff{C}_2)}{\bb{L}^2}^2
	&\leq
	L\bigl(\norm{\bff{v}_1-\bff{v}_2}{\bb{L}^2}^2+\norm{\bff{C}_1-\bff{C}_2}{\bb{L}^2}^2\bigr),
	\quad \forall \bff{v}_1,\bff{v}_2,\bff{C}_1,\bff{C}_2\in \bb{L}^2.
\end{align}
Finally, we assume the Hall-current noise is sufficiently small relative to resistivity:
\begin{align}\label{equ:hall noise small}
	\delta_\chi:=C_{\mathscr{D}}\eta^2\sum_{i=1}^\infty \norm{\bff{\chi}_i}{\bb{W}^{1,\infty}}^2<\sigma,
\end{align}
where $C_{\mathscr{D}}$ is a constant depending on $\mathscr{D}$ arising from the Gaffney estimate~\cite{AmrBerDauGir98}.

\subsection{Weak martingale solution}
\label{subsec:weak mart sol}

We state the following definition of a weak martingale solution of the stochastic Hall--MHD problem. Recall that the spaces $\bb{V}_\sigma$ and $\bb{V}_B$ were defined in \eqref{equ:V sigma}--\eqref{equ:V B}.

\begin{definition}\label{def:weak sol}
A weak martingale solution of \eqref{equ:stoch mhd} consists of a stochastic
basis
\[
(\Omega,\mathscr F,\{\mathscr F_t\}_{t\in[0,T]},\bb P),
\]
two independent families of real-valued Brownian motions $\{\beta_i^u\}_{i\in\bb{N}}$ and $\{\beta_i^B\}_{i\in\bb{N}}$,
and progressively measurable processes $(\bff u,\bff B,\bff J)$ such that $\bff{u},\bff{B} \in C([0,T];\bb L^2_{\mathrm w})$, $\bb{P}$-a.s. and
\begin{subequations}
\begin{alignat*}{1}
	\bff u&\in
	L^2\bigl(\Omega;L^\infty_T(\bb L^2)\cap L^2_T(\bb H_0^1)\bigr),
	\\
	\bff B&\in
	L^2\bigl(\Omega;L^\infty_T(\bb L^2)\cap L^2_T(\bb H^1)\bigr),
	\\
	\bff J&\in L^2\bigl(\Omega;L^2_T(\bb L^2)\bigr).
\end{alignat*}
\end{subequations}
Moreover,
\begin{align*}
	\divg\bff u=0,
	\qquad
	\divg\bff B=0,
	\qquad
	\bff B\cdot\bff n=0,
\end{align*}
in the sense of distributions and normal traces. The current is identified by
\begin{align*}
	\inpro{\bff J(t)}{\bff\omega}
	-
	\inpro{\bff B(t)}{\curl\bff\omega}
	=0,
	\qquad
	\forall\bff\omega\in\hzerocurl,
	\quad
	\text{for a.e. }t\in(0,T),\ \bb P\text{-a.s.}
\end{align*}
For every $\bff\phi\in\bb V_\sigma$ and every $t\in[0,T]$, the velocity equation
holds $\bb P$-a.s.:
\begin{align}
	\label{equ:mart u}
	&\inpro{\bff u(t)}{\bff\phi}
	+\int_0^t\inpro{(\bff u\cdot\nabla)\bff u}{\bff\phi}\ds
	+\nu\int_0^t\inpro{\nabla\bff u}{\nabla\bff\phi}\ds
	\nonumber\\
	&\quad=
	\inpro{\bff u_0}{\bff\phi}
	+\int_0^t\inpro{\bff J\times\bff B}{\bff\phi}\ds
	+\int_0^t\inpro{\bff f}{\bff\phi}\ds
	+\sum_{i=1}^{\infty}
	\int_0^t
	\inpro{\bff g_i(\bff u,\bff B)}{\bff\phi}
	\,\dbeta_i^u(s).
\end{align}
For every $\bff\psi \in\bb{V}_B$,
and every $t\in[0,T]$, the magnetic equation holds $\bb P$-a.s.:
\begin{align}
	\label{equ:mart B}
	&\inpro{\bff B(t)}{\bff\psi}
	+\sigma\int_0^t\inpro{\bff J}{\curl\bff\psi}\ds
	+\eta\int_0^t\inpro{\bff J\times\bff B}{\curl\bff\psi}\ds
	-\int_0^t\inpro{\bff u\times\bff B}{\curl\bff\psi}\ds
	\nonumber\\
	&\quad=
	\inpro{\bff B_0}{\bff\psi}
	-\sum_{i=1}^{\infty}
	\int_0^t
	\inpro{\sigma\bff\chi_i+\eta\bff\chi_i\times\bff B}{\curl\bff\psi}
	\,\dbeta_i^B(s).
\end{align}
\end{definition}
The electric field is not included as a primary unknown in the definition.
Indeed, in the limiting weak formulation Ohm's law \eqref{equ:stoch mhd ohm} has already been
substituted into the magnetic induction equation. If desired, one may recover
\[
\bff E
:=
\sigma\bff J+\eta\bff J\times\bff B-\bff u\times\bff B
\]
as an element of $L^1(0,T;\bb L^{3/2})$, or equivalently as a distributional field.

\section{Finite element approximation}
\label{sec:fem}

\subsection{Finite element setup}

We now introduce the finite element spaces employed in the numerical approximation of problem~\eqref{equ:stoch mhd}. Let $\{\mathcal{T}_h\}_{h>0}$ be a family of quasi-uniform tetrahedral meshes of $\mathscr{D}$ with maximal mesh size $h$. Let $\bb{P}_k$ and $P_k$ denote, respectively, the space of vector-valued and scalar-valued polynomials of degree at most $k$. 

Let $(\bb{V}_h, Q_h)$ denote the MINI finite element pair, i.e. the lowest-order inf-sup stable conforming pair for incompressible flow~\cite{Vol16}. We denote by $\bb{X}_h$, $\bb{RT}_h$, and $DG_h$ the lowest-order N\'ed\'elec space, the lowest-order Raviart--Thomas space, and the space of scalar-valued piecewise constant functions, respectively.
The corresponding finite element spaces with zero (standard, tangential, or normal) traces are $\bb{V}_h^0:= \bb{V}_h\cap \bb{H}^1_0$, $\bb{X}_h^0:= \bb{X}_h\cap \hzerocurl$, and $\bb{RT}_h^0 := \bb{RT}_h \cap \hzerodiv$. Moreover, let $Q_h^0:= Q_h\cap L^2_0$ and $V_h^0:= Q_h\cap H^1_0$.

For contractible domains in $\bb{R}^3$, a key ingredient in our scheme is the following de Rham exact sequence structure for the function spaces and the associated conforming finite element spaces~\cite{ArnFalWin06}:
\begin{equation}\label{equ:comm}
	\begin{tikzcd}
		H^1_0 \arrow[r, "\nabla"] \arrow[d, "\mathcal{I}_h^Q"] & \hzerocurl \arrow[r, "\mathrm{curl}"] \arrow[d, "\mathcal{I}_h^{\bb{X}}"] & \hzerodiv \arrow[r, "\mathrm{div}"] \arrow[d, "\mathcal{I}_h^{\bb{RT}}"] & L^2_0 \arrow[d, "\mathcal{I}_h^{DG}"] \\
		Q_{h}^0 \arrow[r, "\nabla"] & \mathbb{X}_h^0 \arrow[r, "\mathrm{curl}"]                                                     & \mathbb{RT}_h^0 \arrow[r, "\mathrm{div}"]                     & DG_h              
	\end{tikzcd}
\end{equation}
The above de Rham complexes are linked by the corresponding canonical interpolators $\mathcal{I}_h^\bullet$, where $\bullet\in \{Q, \bb{X}, \bb{RT}, DG\}$. These can also be replaced by quasi-interpolators defined in~\cite{ErnGue17}.

We introduce the discrete curl operator $\curlh:\bb{RT}_h^0\to\bb{X}_h^0$ defined by
\begin{align}\label{equ:stoch curlh}
	\inpro{\curlh\bff{C}_h}{\bff{\omega}_h}=\inpro{\bff{C}_h}{\curl\bff{\omega}_h},
	\qquad
	\forall \bff{\omega}_h\in\bb{X}_h^0.
\end{align}
By the discrete Maxwell compactness theorem for finite element differential
forms~\cite{HeHuXu19}, the Raviart--Thomas spaces satisfy the following compactness
property: if $\{\bff C_h\}_h\subset\bb{RT}_h^0$ is such that
\[
\sup_{h>0}
\left(
\norm{\bff C_h}{\bb L^2}
+
\norm{\curlh\bff C_h}{\bb L^2}
+
\norm{\divg\bff C_h}{L^2}
\right)<\infty,
\]
then there exist a subsequence, not relabelled, and
$\bff C\in\hcurl \cap \hzerodiv$ such that
\[
\bff C_h\to\bff C
\quad\text{strongly in }\bb L^2(\mathscr D).
\]


In the analysis of the fluid variables, we utilise the discretely solenoidal space
\[
\bb V_{h,\sigma}
:=
\left\{\bff v_h\in\bb V_h^0: \inpro{\divg\bff v_h}{q_h}=0,
\quad\forall q_h\in Q_h^0
\right\},
\]
and the $\bb{L}^2$-orthogonal projector $\Pi_h^{\bb V}: \bb{L}^2\to \bb{V}_h^0$.
We shall also use the Stokes projection operator associated with the velocity-pressure pair. More precisely, we define the Stokes projector $\mathcal{S}_h: \bb{H}^1_0\times L^2_0\to \bb{V}_{h}^0\times Q_h^0$ by $\mathcal{S}_h(\bff{v},q):=(\mathcal{S}_h \bff{v}, \mathcal{S}_h q)$ such that 
\begin{subequations}\label{equ:stokes proj}
	\begin{alignat}{2}
		&\nu \inpro{\nabla \mathcal{S}_h \bff{v}-\nabla \bff{v}}{\nabla\bff{\phi}_h}- \inpro{\mathcal{S}_h q - q}{\divg \bff{\phi}_h}= 0,
		\; && \quad\forall \bff{\phi}_h \in \bb{V}_{h}^0,
		\label{equ:stokes proj u}
		\\[1ex]
		&\inpro{\divg \mathcal{S}_h \bff{v}- \divg \bff{v}}{q_h} = 0,
		\; && \quad\forall q_h \in Q_h^0.
		\label{equ:stokes proj p}
	\end{alignat}
\end{subequations}
The Stokes projector satisfies the following boundedness and approximation properties~\cite{GaoQiu19, Vol16}:
\begin{align}\label{equ:Stokes approx}
	\norm{\bff{v}- \mathcal{S}_h \bff{v}}{\bb{H}^1} + \norm{q-\mathcal{S}_h q}{L^2}
	&\leq
	Ch \left(\norm{\bff{v}}{\bb{H}^2}+ \norm{q}{H^1}\right),
	\\
	\label{equ:Stokes bdd}
	\norm{\mathcal{S}_h \bff{v}}{\bb{L}^\infty}+ \norm{\mathcal{S}_h \bff{v}}{\bb{W}^{1,3}} &\leq C \left(\norm{\bff{v}}{\bb{H}^2}+ \norm{q}{H^1}\right).
\end{align}

For the magnetic test functions, let $\Pi_h^{\bb{RT}}$ and $\Pi_h^{\bb{X}}$ denote the $\bb{L}^2$-orthogonal projections onto $\bb{RT}_h^0$ and $\bb{X}_h^0$, respectively. The following lemma gathers some properties necessary for our analysis. 

\begin{lemma}
	For every $\bff{\psi}\in \hcurl$, we have the identity
	\begin{align}\label{equ:conv curlh projection}
		\curlh \Pi_h^{\bb{RT}}\bff{\psi}=\Pi_h^{\bb{X}}\curl\bff{\psi}.
	\end{align}
	The operators $\Pi_h^{\bb{RT}}$ and $\Pi_h^{\bb{X}}$ satisfy the following bounds: for $q\in [1,\infty]$,
	\begin{subequations}
		\begin{alignat}{2}
			\label{equ:conv magnetic projection estimates}
			\norm{\bff{\psi}-\Pi_h^{\bb{X}} \bff{\psi}}{\bb{L}^q} 
			&\leq C_{r,q} h^r \norm{\bff{\psi}}{\bb{W}^{r,q}}, \qquad &&\forall \bff{\psi}\in \bb{W}^{r,q}, \quad r\in \{0,1\},
			\\
			\label{equ:stab magnetic projection}
			\norm{\Pi_h^{\bb{X}} \bff{\psi}}{\bb{L}^q}
			&\leq
			C_q \norm{\bff{\psi}}{\bb{L}^q}, \qquad &&\forall \bff{\psi}\in \bb{L}^q,
			\\
			\label{equ:conv magnetic projection Linfty}
			\norm{\curlh\Pi_h^{\bb{RT}}\bff{\psi}}{\bb{L}^q} &\le C_q \norm{\curl \bff{\psi}}{\bb{L}^q}, \qquad &&\forall \bff{\psi}\in\hzerodiv \cap \hcurl.
		\end{alignat}
	\end{subequations}
	Furthermore, we also have
	\begin{subequations}
		\begin{alignat}{2}
			\label{equ:PiX curl H1 stability}
			\norm{\curl\Pi_h^{\bb X}\bff \psi}{\bb L^2}
			&\le
			C\norm{\bff \psi}{\bb H^1},
			\qquad
			&&\forall\bff \psi\in\bb H^1 \cap\hzerocurl,
			\\
			\label{equ:PiX negative curl stability}
			\norm{\curl\Pi_h^{\bb X}\bff \psi}{\widetilde{\bb H}^{-m}}
			&\le
			C_m \norm{\bff \psi}{\bb L^2},
			\qquad
			&&\forall\bff \psi\in\bb L^2, \quad m\geq 1,
		\end{alignat}
	\end{subequations}
\end{lemma}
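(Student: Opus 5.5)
The plan is to obtain the identity \eqref{equ:conv curlh projection} directly from the definition \eqref{equ:stoch curlh}, and the bounds from standard properties of $\bb{L}^2$-projections on quasi-uniform meshes.

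\textbf{The identity.} Since $\curl \bb{X}_h^0\subset \bb{RT}_h^0$ by the complex \eqref{equ:comm}, for every $\bff\omega_h\in\bb X_h^0$ we have
\[
\inpro{\curlh\Pi_h^{\bb{RT}}\bff\psi}{\bff\omega_h}=\inpro{\Pi_h^{\bb{RT}}\bff\psi}{\curl\bff\omega_h}=\inpro{\bff\psi}{\curl\bff\omega_h}.
\]
We next need $\inpro{\bff\psi}{\curl\bff\omega_h}=\inpro{\curl\bff\psi}{\bff\omega_h}$. This integration by parts holds because $\bff\omega_h\in\hzerocurl$ and $\bff\psi\in\hcurl$, so the boundary term $\int_{\partial\mathscr D}(\bff\omega_h\times\bff n)\cdot\bff\psi$ vanishes. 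Hence the right-hand side equals $\inpro{\Pi_h^{\bb X}\curl\bff\psi}{\bff\omega_h}$. Both sides lie in $\bb X_h^0$, so they coincide.

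\textbf{The $\bb{L}^q$ bounds.} The stability estimate \eqref{equ:stab magnetic projection} is the classical $\bb{L}^q$-stability of $\bb L^2$-projections onto finite element spaces on quasi-uniform meshes. It follows from the exponential decay of the inverse mass matrix (Douglas--Dupont--Wahlbin, Crouzeix--Thomée type arguments), which applies equally to N\'ed\'elec spaces with zero tangential trace since the mass matrix is local and uniformly well-conditioned. This step is the main technical input; I would cite it rather than reprove it.

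The approximation estimate \eqref{equ:conv magnetic projection estimates} then follows. The case $r=0$ is just stability plus the triangle inequality. For $r=1$, write
\[
\bff\psi-\Pi_h^{\bb X}\bff\psi=(I-\Pi_h^{\bb X})(\bff\psi-\bff\psi_h)
\]
for a suitable quasi-interpolant $\bff\psi_h\in\bb X_h^0$ with $\|\bff\psi-\bff\psi_h\|_{\bb L^q}\le Ch\|\bff\psi\|_{\bb W^{1,q}}$, which exists by \cite{ErnGue17}. Note that boundary conditions are an issue for $r=1$ with zero trace. If needed, I would interpret $\bb{W}^{1,q}$ here as incorporating the trace condition, or use a quasi-interpolant into $\bb X_h^0$ with the required trace.

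The estimate \eqref{equ:conv magnetic projection Linfty} combines the identity with \eqref{equ:stab magnetic projection}:
\[
\|\curlh\Pi_h^{\bb{RT}}\bff\psi\|_{\bb L^q}=\|\Pi_h^{\bb X}\curl\bff\psi\|_{\bb L^q}\le C_q\|\curl\bff\psi\|_{\bb L^q}.
\]

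\textbf{The curl estimates.} For \eqref{equ:PiX curl H1 stability}, take the Ern--Guermond commuting quasi-interpolant $\mathcal{J}_h^{\bb X}$, which is $\bb L^2$-stable and satisfies $\|\curl\mathcal J_h^{\bb X}\bff\psi\|\le C\|\bff\psi\|_{\bb H^1}$ and $\|\bff\psi-\mathcal J_h^{\bb X}\bff\psi\|\le Ch\|\bff\psi\|_{\bb H^1}$. Then
\[
\curl\Pi_h^{\bb X}\bff\psi=\curl\mathcal J_h^{\bb X}\bff\psi+\curl\Pi_h^{\bb X}(\bff\psi-\mathcal J_h^{\bb X}\bff\psi).
\]
Applying the inverse inequality to the second term gives the bound
\[
Ch^{-1}\|\Pi_h^{\bb X}(\bff\psi-\mathcal J_h^{\bb X}\bff\psi)\|\le Ch^{-1}\cdot Ch\|\bff\psi\|_{\bb H^1}.
\]
Quasi-uniformity is used exactly here.

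For \eqref{equ:PiX negative curl stability}, use duality. For $\bff\varphi\in\bb H^m$ with $m\ge1$, we have
\[
\inpro{\curl\Pi_h^{\bb X}\bff\psi}{\bff\varphi}=\inpro{\Pi_h^{\bb X}\bff\psi}{\curl\bff\varphi},
\]
integrating by parts with no boundary term since $\Pi_h^{\bb X}\bff\psi\in\hzerocurl$. This is bounded by $\|\bff\psi\|_{\bb L^2}\|\bff\varphi\|_{\bb H^1}\le\|\bff\psi\|_{\bb L^2}\|\bff\varphi\|_{\bb H^m}$. Taking the supremum over $\bff\varphi$ gives the claim.
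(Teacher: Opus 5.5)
Your proposal follows essentially the same route as the paper: the identity from the definition of $\curlh$ plus Green's formula against $\bff\omega_h\in\hzerocurl$, the $\bb L^q$-stability taken from the literature, the third bound from the identity combined with $\bb L^q$-stability, the curl bound via a quasi-interpolant $\mathcal J_h^{\bb X}$ and an inverse estimate on quasi-uniform meshes, and the negative-norm bound by duality. Your caveat about the case $r=1$ of the approximation estimate is justified, and the paper does not address it because it only cites that estimate: since $\Pi_h^{\bb X}$ maps into $\bb X_h^0$, a nonzero constant field already gives an $\bb L^2$-error of order $h^{1/2}$ (and of order one in $\bb L^\infty$), so that estimate requires $\bff\psi\times\bff n=\bff 0$ on $\partial\mathscr D$, under which your quasi-interpolation argument goes through.
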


\begin{proof}
	The identity \eqref{equ:conv curlh projection} follows from \eqref{equ:stoch curlh}. Indeed, for every $\bff{\omega}_h\in\bb{X}_h^0$, we have
	\[
	\inpro{\curlh\Pi_h^{\bb{RT}}\bff{\psi}}{\bff{\omega}_h}
	=
	\inpro{\Pi_h^{\bb{RT}}\bff{\psi}}{\curl\bff{\omega}_h}
	=
	\inpro{\bff{\psi}}{\curl\bff{\omega}_h}
	=
	\inpro{\curl\bff{\psi}}{\bff{\omega}_h}
	=
	\inpro{\Pi_h^{\bb{X}}\curl\bff{\psi}}{\bff{\omega}_h}.
	\]
	Estimates~\eqref{equ:conv magnetic projection estimates} and~\eqref{equ:stab magnetic projection} were shown in~~\cite{VeiGomPerZam26,VeiHuMas25}, while inequality~\eqref{equ:conv magnetic projection Linfty} follows immediately by \eqref{equ:conv curlh projection} and \eqref{equ:stab magnetic projection}.
	
	Next, we prove \eqref{equ:PiX curl H1 stability}. Let
	$\mathcal J_h^{\bb X}$ be a first-order N\'ed\'elec quasi-interpolation operator~\cite{ErnGue18} satisfying
	\[
	\norm{\bff \psi-\mathcal J_h^{\bb X}\bff \psi}{\bb L^2}
	\le
	Ch\norm{\bff \psi}{\bb H^1},
	\qquad
	\norm{\curl\mathcal J_h^{\bb X}\bff \psi}{\bb L^2}
	\le
	C\norm{\bff \psi}{\bb H^1}.
	\]
	We write
	\[
	\curl\Pi_h^{\bb X}\bff \psi
	=
	\curl\mathcal J_h^{\bb X}\bff \psi
	+
	\curl \Pi_h^{\bb X} \left(\bff \psi-\mathcal J_h^{\bb X}\bff \psi\right).
	\]
	Using the inverse estimate and the $\bb L^2$-stability of
	$\Pi_h^{\bb X}$, we get
	\[
	\begin{aligned}
		\norm{\curl\Pi_h^{\bb X}\bff \psi}{\bb L^2}
		&\le
		\norm{\curl\mathcal J_h^{\bb X}\bff \psi}{\bb L^2}
		+
		Ch^{-1}
		\norm{\Pi_h^{\bb X}\left(\bff \psi-\mathcal J_h^{\bb X}\bff \psi\right)}
		{\bb L^2}
		\\
		&\le
		C\norm{\bff \psi}{\bb H^1}
		+
		Ch^{-1}
		\norm{\bff \psi-\mathcal J_h^{\bb X}\bff \psi}{\bb L^2}
		\\
		&\le
		C\norm{\bff \psi}{\bb H^1}.
	\end{aligned}
	\]
	This proves \eqref{equ:PiX curl H1 stability}.
	
	Finally, for $\bff z \in\bb H^m(\mathscr D)$ with
	$\norm{\bff z}{\bb H^m}\le1$, integration by parts gives
	\[
	\abs{\inpro{\curl\Pi_h^{\bb X}\bff \psi}{\bff z}}
	=
	\abs{\inpro{\Pi_h^{\bb X}\bff \psi}{\curl\bff z}}
	\le
	\norm{\Pi_h^{\bb X}\bff \psi}{\bb L^2}
	\norm{\curl \bff z}{\bb L^2}
	\le
	C\norm{\bff \psi}{\bb L^2}.
	\]
	Taking the supremum over such $\bff z$ gives~\eqref{equ:PiX negative curl stability}. This completes the proof.
\end{proof}

In our analysis, we need to introduce a reconstruction space $\bb{W}_h^0\subset\bb{H}^1(\mathscr{D})\cap\hzerodiv$ as follows. Let
\[
\bb{S}_h^{\mathcal R}:=\{\bff v_h\in \bb{H}^1(\mathscr D): \bff{v}_h|_K\in \mathbb P_1(K),\;\forall K\in\mathcal T_h\}.
\]
We set
\begin{align}\label{equ:Wh0 definition}
	\bb W_h^0
	:=
	\Bigl\{
	\bff\Phi_h\in \bb{S}_h^{\mathcal R}:
	\bff\Phi_h\cdot\bff n_F=0
	\text{ on every boundary face }F\subset\partial\mathscr D
	\Bigr\}.
\end{align}
Here $\bff n_F$ denotes the constant outward unit normal on the boundary face $F$. Then
\[
\bb W_h^0\subset \bb H^1(\mathscr D)\cap\hzerodiv.
\]
Moreover, the spaces $\bb W_h^0$ are dense in
$\bb H^1(\mathscr D)\cap\hzerodiv$ in the following sense: for every
$\bff\Phi\in\bb H^1(\mathscr D)\cap\hzerodiv$, there exists
$\bff\Phi_h\in\bb W_h^0$ such that
\begin{align}\label{equ:Wh0 approximation property}
	\norm{\bff\Phi_h-\bff\Phi}{\bb H^1}\to0
	\qquad\text{as }h\to0.
\end{align}
Indeed, this follows from the density of smooth vector fields satisfying the
homogeneous normal boundary condition in $\bb H^1(\mathscr D)\cap\hzerodiv$ on polyhedral domains~\cite{BauPau16}. For such smooth fields, the nodal Lagrange interpolant preserves the condition $\bff\Phi_h\cdot\bff n_F=0$ on each boundary face $F$, since $\bff n_F$ is constant and the interpolant is affine on $F$. The space $\bb W_h^0$ is
used only as an auxiliary conforming space for the computable Maxwell
reconstruction to be defined next.

For $\bff{C}_h\in\bb{RT}_h^0$, define its Maxwell reconstruction $\mathcal{R}_h\bff{C}_h\in\bb{W}_h^0$ by
\begin{align}\label{equ:Maxwell reconstruction scheme}
	&\inpro{\mathcal{R}_h\bff{C}_h}{\bff{\Phi}_h}
	+\inpro{\curl\mathcal{R}_h\bff{C}_h}{\curl\bff{\Phi}_h}
	+\inpro{\divg\mathcal{R}_h\bff{C}_h}{\divg\bff{\Phi}_h}
	\nonumber\\
	&\quad
	=
	\inpro{\bff{C}_h}{\bff{\Phi}_h}
	+\inpro{\curlh\bff{C}_h}{\curl\bff{\Phi}_h}
	+\inpro{\divg\bff{C}_h}{\divg\bff{\Phi}_h},
	\qquad
	\forall \bff{\Phi}_h\in\bb{W}_h^0.
\end{align}
Since the bilinear form in \eqref{equ:Maxwell reconstruction scheme} is coercive on $\bb{W}_h^0$, this operator $\mathcal{R}_h$ is well-defined.
We have the following lemma on the boundedness and the consistency of the operator $\mathcal R_h$.

\begin{lemma}[Boundedness and consistency of $\mathcal{R}_h$]
	\label{lem:conv reconstruction consistency}
	Let $\{\bff C_h\}_{h>0}$ be a sequence with $\bff C_h\in\bb{RT}_h^0$. Then we have
	\begin{align}\label{equ:Maxwell reconstruction stability}
		\norm{\mathcal{R}_h\bff{C}_h}{\bb{H}^1}
		\leq
		C\bigl(\norm{\bff{C}_h}{\bb{L}^2}+\norm{\curlh\bff{C}_h}{\bb{L}^2}+\norm{\divg\bff{C}_h}{L^2}\bigr),
		\qquad
		\forall \bff{C}_h\in\bb{RT}_h^0.
	\end{align}
	Furthermore, suppose that as $h\to 0$,
	\begin{align*}
		&\bff C_h\to\bff C\quad\text{strongly in }\bb L^2(\mathscr D),
		\\
		&\curlh\bff C_h\rightharpoonup\bff K\quad\text{weakly in }\bb L^2(\mathscr D),
		\\
		&\divg\bff C_h\rightharpoonup D\quad\text{weakly in }L^2(\mathscr D).
	\end{align*}
	Assume that $\bff K=\curl\bff C$ and $D=\divg\bff C$
	in the sense of distributions. Then
	\begin{align}
		\label{equ:recon consistency conclusion}
		\mathcal R_h\bff C_h\to\bff C
		\quad\text{strongly in }\bb L^2(\mathscr D).
	\end{align}
\end{lemma}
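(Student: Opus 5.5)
Both claims will be proved by working with the coercive bilinear form
\[
a(\bff\Phi,\bff\Psi):=\inpro{\bff\Phi}{\bff\Psi}+\inpro{\curl\bff\Phi}{\curl\bff\Psi}+\inpro{\divg\bff\Phi}{\divg\bff\Psi}.
\]
On $\bb H^1\cap\hzerodiv$ over a convex domain, the Gaffney inequality $\norm{\bff\Phi}{\bb H^1}^2\le C\,a(\bff\Phi,\bff\Phi)$ holds~\cite{AmrBerDauGir98}.

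For the stability bound \eqref{equ:Maxwell reconstruction stability}, I test \eqref{equ:Maxwell reconstruction scheme} with $\bff\Phi_h=\mathcal R_h\bff C_h$. Cauchy--Schwarz then gives
\[
a(\mathcal R_h\bff C_h,\mathcal R_h\bff C_h)\le \bigl(\norm{\bff C_h}{\bb L^2}+\norm{\curlh\bff C_h}{\bb L^2}+\norm{\divg\bff C_h}{L^2}\bigr)\,a(\mathcal R_h\bff C_h,\mathcal R_h\bff C_h)^{1/2}.
\]
Combining this with Gaffney yields the $\bb H^1$ bound.

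For consistency, set $\bff R_h:=\mathcal R_h\bff C_h$. The assumed convergences imply, by uniform boundedness, that the right-hand norms are bounded, so $\bff R_h$ is bounded in $\bb H^1$. Rellich then gives a subsequence with $\bff R_h\to\bff R$ strongly in $\bb L^2$ and weakly in $\bb H^1$, and $\bff R\in\bb H^1\cap\hzerodiv$ because this set is weakly closed.

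I identify $\bff R$ as follows. Fix $\bff\Phi\in\bb H^1\cap\hzerodiv$ and choose $\bff\Phi_h\in\bb W_h^0$ with $\bff\Phi_h\to\bff\Phi$ in $\bb H^1$ by \eqref{equ:Wh0 approximation property}. Passing to the limit in \eqref{equ:Maxwell reconstruction scheme} (each term is a weak--strong product) gives
\[
a(\bff R,\bff\Phi)=\inpro{\bff C}{\bff\Phi}+\inpro{\bff K}{\curl\bff\Phi}+\inpro{D}{\divg\bff\Phi}.
\]
Using $\bff K=\curl\bff C$ and $D=\divg\bff C$, the right-hand side equals $a(\bff C,\bff\Phi)$. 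This step needs $\bff C\in\bb H^1\cap\hzerodiv$ itself.

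The membership of $\bff C$ is the main point to check. First, $\bff C\in\hcurl\cap\hdiv$ because $\bff K,D\in L^2$. Second, $\bff C\cdot\bff n=0$ holds because $\bff C_h\in\hzerodiv$ converge strongly in $\bb L^2$ with divergences converging weakly, so $\hzerodiv$ (weakly closed in $\hdiv$) contains $\bff C$. On convex domains, $\hcurl\cap\hzerodiv\hookrightarrow\bb H^1$, which settles membership. There is one subtlety: the identification $\bff K=\curl\bff C$ in distributions is given as an assumption, so no boundary issue arises for the curl part.

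Taking $\bff\Phi=\bff R-\bff C$ in $a(\bff R-\bff C,\bff\Phi)=0$ then gives $\bff R=\bff C$. Since the limit is unique, the whole sequence converges, and we obtain $\mathcal R_h\bff C_h\to\bff C$ strongly in $\bb L^2$, which is \eqref{equ:recon consistency conclusion}.
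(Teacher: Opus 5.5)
Your proof is correct and follows the paper's argument essentially step for step. Both use the Cauchy--Schwarz plus Gaffney stability bound, extract a weakly $\bb H^1$-convergent subsequence, pass to the limit in the reconstruction equation with $\bb W_h^0$-approximants, test the limiting identity with $\bff R-\bff C$, and finish with the subsequence principle; your explicit justification that $\bff C\in\hzerodiv$ (weak closedness of $\hzerodiv$ in $\hdiv$) fills in a point the paper only asserts.
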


\begin{proof}
	Taking $\bff\Phi_h=\mathcal R_h\bff C_h$ in the definition of the reconstruction gives
	\begin{align*}
		&\norm{\mathcal R_h\bff C_h}{\bb L^2}^2
		+\norm{\curl\mathcal R_h\bff C_h}{\bb L^2}^2
		+\norm{\divg\mathcal R_h\bff C_h}{L^2}^2
		\\
		&=
		\inpro{\bff C_h}{\mathcal R_h\bff C_h}
		+\inpro{\curlh\bff C_h}{\curl\mathcal R_h\bff C_h}
		+\inpro{\divg\bff C_h}{\divg\mathcal R_h\bff C_h}
		\\
		&\le
		\bigl(
		\norm{\bff C_h}{\bb L^2}
		+\norm{\curlh\bff C_h}{\bb L^2}
		+\norm{\divg\bff C_h}{L^2}
		\bigr)
		\bigl(
		\norm{\mathcal R_h\bff C_h}{\bb L^2}
		+\norm{\curl\mathcal R_h\bff C_h}{\bb L^2}
		+\norm{\divg\mathcal R_h\bff C_h}{L^2}
		\bigr).
	\end{align*}
	Therefore,
	\begin{align}
		\label{equ:recon graph bound proof}
		\norm{\mathcal R_h\bff C_h}{\bb L^2}
		+\norm{\curl\mathcal R_h\bff C_h}{\bb L^2}
		+\norm{\divg\mathcal R_h\bff C_h}{L^2}
		\le
		C\bigl(
		\norm{\bff C_h}{\bb L^2}
		+\norm{\curlh\bff C_h}{\bb L^2}
		+\norm{\divg\bff C_h}{L^2}
		\bigr).
	\end{align}
	Since $\mathscr D$ is a convex contractible polyhedron, the Maxwell regularity estimate gives
	\[
	\norm{\bff v}{\bb H^1}
	\le
	C\bigl(
	\norm{\bff v}{\bb L^2}
	+\norm{\curl\bff v}{\bb L^2}
	+\norm{\divg\bff v}{L^2}
	\bigr),
	\qquad
	\forall \bff v\in \bb H^1(\mathscr D)\cap\hzerodiv .
	\]
	Using $\mathcal R_h\bff C_h\in\bb W_h^0\subset \bb H^1(\mathscr D)\cap\hzerodiv$, we obtain
	\begin{align}
		\label{equ:recon H1 bound proof}
		\norm{\mathcal R_h\bff C_h}{\bb H^1}
		\le
		C\bigl(
		\norm{\bff C_h}{\bb L^2}
		+\norm{\curlh\bff C_h}{\bb L^2}
		+\norm{\divg\bff C_h}{L^2}
		\bigr).
	\end{align}
	Thus the boundedness assertion \eqref{equ:Maxwell reconstruction stability} follows.
	
	We now prove consistency. By \eqref{equ:recon H1 bound proof}, there exist a subsequence, not relabelled, and $\bff R\in\bb H^1(\mathscr D)$ such that
	\[
	\mathcal R_h\bff C_h\rightharpoonup\bff R\quad\text{weakly in }\bb H^1(\mathscr D),
	\qquad
	\mathcal R_h\bff C_h\to\bff R\quad\text{strongly in }\bb L^2(\mathscr D).
	\]
	Since $\bb W_h^0\subset\hzerodiv$ and $\hzerodiv$ is weakly closed, $\bff R\in\hzerodiv$.
	
	Let $\bff\Phi\in\bb H^1(\mathscr D)\cap\hzerodiv$. By the approximation property of $\bb W_h^0$, choose $\bff\Phi_h\in\bb W_h^0$ such that
	\[
	\bff\Phi_h\to\bff\Phi
	\qquad\text{strongly in }\bb H^1(\mathscr D).
	\]
	Testing the reconstruction equation with $\bff\Phi_h$ gives
	\begin{align}
		\label{equ:recon equation test}
		&\inpro{\mathcal R_h\bff C_h}{\bff\Phi_h}
		+\inpro{\curl\mathcal R_h\bff C_h}{\curl\bff\Phi_h}
		+\inpro{\divg\mathcal R_h\bff C_h}{\divg\bff\Phi_h}
		\nonumber\\
		&=
		\inpro{\bff C_h}{\bff\Phi_h}
		+\inpro{\curlh\bff C_h}{\curl\bff\Phi_h}
		+\inpro{\divg\bff C_h}{\divg\bff\Phi_h}.
	\end{align}
	Passing to the limit gives
	\begin{align}
		\label{equ:recon limit variational}
		\inpro{\bff R}{\bff\Phi}
		+\inpro{\curl\bff R}{\curl\bff\Phi}
		+\inpro{\divg\bff R}{\divg\bff\Phi}
		=
		\inpro{\bff C}{\bff\Phi}
		+\inpro{\bff K}{\curl\bff\Phi}
		+\inpro{D}{\divg\bff\Phi}.
	\end{align}
	Using $\bff K=\curl\bff C$ and $D=\divg\bff C$,
	\begin{align}
		\label{equ:recon limit variational C}
		\inpro{\bff R}{\bff\Phi}
		+\inpro{\curl\bff R}{\curl\bff\Phi}
		+\inpro{\divg\bff R}{\divg\bff\Phi}
		=
		\inpro{\bff C}{\bff\Phi}
		+\inpro{\curl\bff C}{\curl\bff\Phi}
		+\inpro{\divg\bff C}{\divg\bff\Phi}.
	\end{align}
	Since $\bff C\in\hzerodiv$, $\curl\bff C\in\bb L^2$, and $\divg\bff C\in L^2$, the Maxwell regularity estimate gives $\bff C\in\bb H^1(\mathscr D)$. Hence $\bff Z:=\bff R-\bff C\in \bb H^1(\mathscr D)\cap\hzerodiv$. Taking $\bff\Phi=\bff Z$ in \eqref{equ:recon limit variational C} gives
	\[
	\norm{\bff Z}{\bb L^2}^2+\norm{\curl\bff Z}{\bb L^2}^2+\norm{\divg\bff Z}{L^2}^2=0.
	\]
	Thus $\bff R=\bff C$. Since every subsequence has a further subsequence converging to $\bff C$, the whole sequence converges strongly in $\bb L^2$.
\end{proof}

\subsection{Fully discrete scheme}
\label{subsec:stoch fem scheme}

Let the discretisation parameters $h,\tau>0$, and recall the notation in \eqref{equ:tau} and \eqref{equ:Brownian increment}. For $n=1,\ldots,N$, we set $\bff f^n:= \frac1{\tau} \int_{t_{n-1}}^{t_n} \bff f(s) \ds$. A fully discrete scheme for solving \eqref{equ:stoch mhd} can be described as follows.
Let
\[
\bff u_h^0\in\bb V_{h,\sigma},
\qquad
\bff B_h^0\in\bb{RT}_h^0,
\qquad
\bff J_h^0\in\bb X_h^0,
\]
satisfy
\[
\divg\bff B_h^0=0,
\qquad
\bff J_h^0=\curlh\bff B_h^0.
\]
We assume that
\[
\bff u_h^0\to\bff u_0,
\qquad
\bff B_h^0\to\bff B_0
\quad\text{strongly in }\bb L^2,
\]
and $\sup_h\norm{\bff J_h^0}{\bb L^2}<\infty$.

For $n=1,2,\ldots,N$, assume $(\bff{u}_h^{n-1},\bff{B}_h^{n-1},\bff{J}_h^{n-1})$ is known. The scheme seeks to find
\[
(\bff{u}_h^n,p_h^n,\bff{B}_h^n,\bff{J}_h^n,\bff{E}_h^n)\in \bb{V}_h^0\times Q_h^0\times\bb{RT}_h^0\times\bb{X}_h\times\bb{X}_h^0
\]
such that, for all $(\bff{\phi}_h,q_h,\bff{\psi}_h,\bff{\chi}_h,\bff{\omega}_h)\in \bb{V}_h^0\times Q_h^0\times\bb{RT}_h^0\times\bb{X}_h\times\bb{X}_h^0$,
\begin{subequations}\label{equ:stoch fem}
	\begin{align}
		&\inpro{\bff{u}_h^n-\bff{u}_h^{n-1}}{\bff{\phi}_h}
		+\nu\tau\inpro{\nabla\bff{u}_h^n}{\nabla\bff{\phi}_h}
		+\frac{\tau}{2}\Big[\inpro{(\bff{u}_h^{n-1}\cdot\nabla)\bff{u}_h^n}{\bff{\phi}_h}-\inpro{(\bff{u}_h^{n-1}\cdot\nabla)\bff{\phi}_h}{\bff{u}_h^n}\Big]
		\nonumber\\
		&\quad
		-\tau\inpro{p_h^n}{\divg\bff{\phi}_h}
		-\tau\inpro{\bff{J}_h^n\times\bff{B}_h^{n-1}}{\bff{\phi}_h}
		=
		\tau\inpro{\bff{f}^n}{\bff{\phi}_h}
		+\sum_{i=1}^\infty\inpro{\bff{g}_i(\bff{u}_h^{n-1},\bff{B}_h^{n-1})}{\bff{\phi}_h}\Delta_n\beta_i^u,
		\label{equ:stoch fem u}
		\\[1ex]
		&\inpro{\divg\bff{u}_h^n}{q_h}=0,
		\label{equ:stoch fem div u}
		\\[1ex]
		&\inpro{\bff{B}_h^n-\bff{B}_h^{n-1}}{\bff{\psi}_h}
		+\tau\inpro{\curl\bff{E}_h^n}{\bff{\psi}_h}
		=
		-\sum_{i=1}^{\infty}
		\inpro{\curl \Pi_h^{\bb{X}}\bigl(\sigma\bff{\chi}_i+\eta\bff{\chi}_i\times\mathcal{R}_h\bff{B}_h^{n-1}\bigr)}{\bff{\psi}_h}\Delta_n\beta_i^B.
		\label{equ:stoch fem B}
		\\[1ex]
		&\sigma\inpro{\bff{J}_h^n}{\bff{\chi}_h}
		+\eta\inpro{\bff{J}_h^n\times\bff{B}_h^{n-1}}{\bff{\chi}_h}
		=
		\inpro{\bff{E}_h^n}{\bff{\chi}_h}
		+\inpro{\bff{u}_h^n\times\bff{B}_h^{n-1}}{\bff{\chi}_h},
		\label{equ:stoch fem Ohm}
		\\[1ex]
		&\inpro{\bff{J}_h^n}{\bff{\omega}_h}
		-\inpro{\bff{B}_h^n}{\curl\bff{\omega}_h}=0,
		\label{equ:stoch fem J}
	\end{align}
\end{subequations}
where $\Delta_n\beta_i^u$ and $\Delta_n\beta_i^B$ are the Brownian increments given by \eqref{equ:Brownian increment}.

We remark that the scheme is well-posed by an argument similar to~\cite{GolSoeTra26}.

\subsection{Stability estimates}
\label{subsec:stoch stability estimates}

In this subsection we derive the estimates needed for compactness of the fully discrete stochastic scheme.

We first note that \eqref{equ:stoch fem J} is equivalent to $\Pi_h^{\bb{X}} \bff{J}_h^{n-1}=\curlh\bff{B}_h^{n-1}$ \eqref{equ:Maxwell reconstruction stability}. If $\divg\bff{B}_h^{n-1}=0$, then
\begin{align}\label{equ:stoch reconstruction B bound}
	\norm{\mathcal{R}_h\bff{B}_h^{n-1}}{\bb{H}^1}
	\leq
	C\bigl(\norm{\bff{B}_h^{n-1}}{\bb{L}^2}+\norm{\bff{J}_h^{n-1}}{\bb{L}^2}\bigr),
\end{align}
which implies by \eqref{equ:PiX curl H1 stability} and Young's inequality,
\begin{align}\label{equ:hall current coeff bound final}
	\sum_{i=1}^{\infty}\norm{\curl \Pi_h^{\bb{X}}\bigl(\sigma\bff{\chi}_i+\eta\bff{\chi}_i\times\mathcal{R}_h\bff{B}_h^{n-1}\bigr)}{\bb{L}^2}^2
	\leq
	C_\chi\bigl(1+\norm{\bff{B}_h^{n-1}}{\bb{L}^2}^2\bigr)+\delta_\chi\norm{\bff{J}_h^{n-1}}{\bb{L}^2}^2,
\end{align}
where $\delta_\chi$ was defined in \eqref{equ:hall noise small} and
\begin{align*}
	C_\chi:=C\sigma^2\sum_{i=1}^{\infty}\norm{\bff{\chi}_i}{\bb{H}^1}^2+C\eta^2\sum_{i=1}^{\infty}\norm{\bff{\chi}_i}{\bb{W}^{1,\infty}}^2.
\end{align*}
The constant $C$ is independent of $h$, $\tau$, and $n$.

The following proposition shows the preservation of magnetic divergence at the discrete level and higher moment bounds under the assumption that $\delta_\chi$ is sufficiently small.

\begin{proposition}
	\label{prop:stoch stability revised}
	Let $(\bff{u}_h^n,p_h^n,\bff{B}_h^n,\bff{E}_h^n,\bff{J}_h^n)$, $n=0,\ldots,N$, be a solution of the fully discrete scheme \eqref{equ:stoch fem}. Suppose that
	\[
	\divg\bff{B}_h^0=0,
	\qquad
	\Pi_h^{\bb{X}} \bff{J}_h^0=\curlh\bff{B}_h^0,
	\]
	and that the velocity-noise growth condition \eqref{equ:g growth} and the current-noise assumptions \eqref{equ:chi assumption} and \eqref{equ:hall noise small} hold.
	Then
	\begin{align}\label{equ:div B exact revised}
		\divg\bff{B}_h^n=0
		\quad\text{pointwise on each element of }\mathcal T_h,
		\qquad n=0,\ldots,N.
	\end{align}
	Moreover, for every integer $p\ge1$, there exists
	$\delta_{p}^\ast>0$ such that, if
	$\delta_\chi\le \delta_{p}^\ast$, then
	\begin{align}\label{equ:stoch stability revised main}
		&\bb E \left[\max_{0\le n\le N}
		\left(
		\norm{\bff u_h^n}{\bb L^2}^2
		+
		\norm{\bff B_h^n}{\bb L^2}^2
		\right)^p\right]
		+
		\bb E\left[
		\left(
		\sum_{n=1}^{N}
		\norm{\bff u_h^n-\bff u_h^{n-1}}{\bb L^2}^2
		+
		\sum_{n=1}^{N}
		\norm{\bff B_h^n-\bff B_h^{n-1}}{\bb L^2}^2
		\right)^p
		\right]
		\nonumber\\
		&\quad
		+
		\bb E\left[
		\left(
		\sum_{n=1}^{N}\tau
		\left(
		\nu\norm{\nabla\bff u_h^n}{\bb L^2}^2
		+
		\sigma\norm{\bff J_h^n}{\bb L^2}^2
		\right)
		\right)^p
		\right]
		\nonumber\\
		&\le
		C_{p,T}
		\left(
		1
		+
		\bb E\left[
		\left(
		\norm{\bff u_h^0}{\bb L^2}^2
		+
		\norm{\bff B_h^0}{\bb L^2}^2
		\right)^p
		\right]
		+
		\bb E\left[
		\left(
		\tau\norm{\bff J_h^0}{\bb L^2}^2
		\right)^p
		\right]
		+
		\norm{\bff f}{L^2_T(\bb L^2)}^{2p}
		\right).
	\end{align}
\end{proposition}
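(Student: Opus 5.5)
We split the argument into the exact divergence identity, a pathwise one-step energy identity, and a discrete stochastic Gronwall argument for the $p$-th moments.

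\textbf{Divergence preservation.} We prove \eqref{equ:div B exact revised} by induction on $n$. In \eqref{equ:stoch fem B}, every term except $\bff B_h^n-\bff B_h^{n-1}$ is a curl of an element of $\bb X_h^0$: both $\curl\bff E_h^n$ and $\curl\Pi_h^{\bb X}(\cdots)$ lie in $\curl\bb X_h^0\subset\bb{RT}_h^0$ by \eqref{equ:comm}. Since $\bff B_h^n-\bff B_h^{n-1}\in\bb{RT}_h^0$ and the test space is all of $\bb{RT}_h^0$, the equation holds strongly in $\bb{RT}_h^0$:
\[
\bff B_h^n=\bff B_h^{n-1}-\tau\curl\bff E_h^n-\sum_i\curl\Pi_h^{\bb X}(\cdots)\Delta_n\beta_i^B .
\]
Taking the divergence gives $\divg\bff B_h^n=\divg\bff B_h^{n-1}$, and $\divg\bff B_h^n$ is piecewise constant. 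This also shows that \eqref{equ:stoch reconstruction B bound} is available at every step.

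\textbf{One-step energy identity.} Test \eqref{equ:stoch fem u} with $\bff\phi_h=\bff u_h^n$ and \eqref{equ:stoch fem div u} with $q_h=p_h^n$. The skew-symmetrised convection term and the pressure term then vanish. Test \eqref{equ:stoch fem B} with $\bff\psi_h=\bff B_h^n$. Using \eqref{equ:stoch fem J} with $\bff\omega_h=\bff E_h^n$, we get $\tau\inpro{\curl\bff E_h^n}{\bff B_h^n}=\tau\inpro{\bff E_h^n}{\bff J_h^n}$. Then \eqref{equ:stoch fem Ohm} with $\bff\chi_h=\bff J_h^n$ turns this into $\tau\sigma\norm{\bff J_h^n}{\bb L^2}^2-\tau\inpro{\bff u_h^n\times\bff B_h^{n-1}}{\bff J_h^n}$, because the Hall term $(\bff J\times\bff B)\cdot\bff J$ vanishes. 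The Lorentz term in the momentum equation cancels exactly this coupling term.

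Using $2\inpro{a-b}{a}=\norm{a}{}^2-\norm{b}{}^2+\norm{a-b}{}^2$, we obtain
\[
\tfrac12\bigl(e^n-e^{n-1}\bigr)+\tfrac12 d^n+\tau\bigl(\nu\norm{\nabla\bff u_h^n}{}^2+\sigma\norm{\bff J_h^n}{}^2\bigr)=\tau\inpro{\bff f^n}{\bff u_h^n}+\sum_i\inpro{\bff g_i^{n-1}}{\bff u_h^n}\Delta_n\beta_i^u-\sum_i\inpro{\bff G_i^{n-1}}{\bff B_h^n}\Delta_n\beta_i^B ,
\]
where $e^n$ is the energy, $d^n$ collects the squared increments, and $\bff G_i^{n-1}$ is the magnetic noise coefficient.

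Each noise term is split by writing $\bff u_h^n=\bff u_h^{n-1}+(\bff u_h^n-\bff u_h^{n-1})$, and similarly for $\bff B_h^n$. The parts with $\bff u_h^{n-1}$ and $\bff B_h^{n-1}$ are martingale increments $M^n-M^{n-1}$. The increment parts are bounded by $\tfrac14 d^n$ plus $\norm{\sum_i\bff g_i^{n-1}\Delta_n\beta_i^u}{}^2+\norm{\sum_i\bff G_i^{n-1}\Delta_n\beta_i^B}{}^2$. The forcing term is handled with the Poincaré inequality and absorbed into the viscous term.

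\textbf{Moments and main obstacle.} Summing over $n$, raising to the power $p$, taking the maximum and then the expectation, we bound the quadratic noise terms by conditional It\^o isometry. For higher moments we use Burkholder--Davis--Gundy for discrete sums of Gaussian increments, so they are bounded by $\bb E(\sum_n\tau\sum_i\norm{\bff G_i^{n-1}}{}^2)^p$ and its $\bff g$ analogue. By \eqref{equ:g growth} and \eqref{equ:hall current coeff bound final}, these are at most $C\,\bb E(\sum\tau(1+e^{n-1}))^p+\delta_\chi^p c_p\,\bb E(\sum_{n}\tau\norm{\bff J_h^{n-1}}{}^2)^p$. The martingale $\max_n|M^n|$ is treated by BDG as $\le\epsilon\,\bb E\max_n(e^n)^p+C_\epsilon\,\bb E(\sum\tau(\cdots))^p$, with the same $\delta_\chi$ term.

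The key step, and the main obstacle, is the lagged current. After an index shift, $\sum_{n=1}^N\tau\norm{\bff J_h^{n-1}}{}^2=\tau\norm{\bff J_h^0}{}^2+\sum_{n=1}^{N-1}\tau\norm{\bff J_h^n}{}^2$. This is why $\tau\norm{\bff J_h^0}{}^2$ appears on the right of \eqref{equ:stoch stability revised main}. The remaining sum is absorbed into the left-hand dissipation $\sigma\sum\tau\norm{\bff J_h^n}{}^2$ provided $c_p\delta_\chi^p<\tfrac12\sigma^p$, which defines $\delta_p^\ast$. The constant $c_p$ comes from the BDG and Young's inequality constants and so depends on $p$. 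The constants must be tracked carefully so that the absorption is uniform in $h$ and $\tau$. With that done, a discrete Gronwall lemma applied to $\bb E\max_{k\le n}(e^k)^p$ yields \eqref{equ:stoch stability revised main}.
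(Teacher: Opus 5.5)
Your proposal is correct and follows the paper's proof essentially step by step. The shared steps are: the strong identity in $\bb{RT}_h^0$ giving divergence preservation; the tested energy identity, in which the Hall term drops out and the Lorentz and transport couplings cancel; splitting each noise term into a martingale increment plus a Young-absorbed increment part; moment bounds via \eqref{equ:g growth} and \eqref{equ:hall current coeff bound final}; absorption of the index-shifted lagged current for small $\delta_\chi$, which produces the $\tau\|\bff J_h^0\|_{\bb L^2}^2$ term; and discrete Gronwall.

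The only minor difference is in the $p$-th moments of the quadratic remainders $\sum_n\|\cdot\|_{\bb L^2}^2$. You bound them directly by a BDG argument, while the paper uses $(x+y)^p-x^p\le C_p x^{p-1}y+C_p y^p$ together with conditional Gaussian moments and Young's inequality. Both give the same bound.
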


\begin{proof}
	We divide the proof into several steps.
	
	\emph{Step 1: Exact preservation of magnetic divergence.}
	Since
	\[
	\bff E_h^n\in\bb X_h^0,
	\qquad
	\Pi_h^{\bb X}\bigl(\sigma\bff\chi_i+\eta\bff\chi_i\times\mathcal R_h\bff B_h^{n-1}\bigr)\in\bb X_h^0,
	\]
	we have
	\[
	\curl\bff E_h^n\in\bb{RT}_h^0,
	\qquad
	\curl\Pi_h^{\bb X}\bigl(\sigma\bff\chi_i+\eta\bff\chi_i\times\mathcal R_h\bff B_h^{n-1}\bigr)\in\bb{RT}_h^0.
	\]
	The magnetic update \eqref{equ:stoch fem B} therefore gives, as an identity in $\bb{RT}_h^0$,
	\begin{align}\label{equ:B update strong revised}
		\bff B_h^n
		=
		\bff B_h^{n-1}
		-\tau\,\curl\bff E_h^n
		-\sum_{i=1}^{\infty}
		\curl\Pi_h^{\bb X}\bigl(\sigma\bff\chi_i+\eta\bff\chi_i\times\mathcal R_h\bff B_h^{n-1}\bigr)\Delta_n\beta_i^B.
	\end{align}
	Taking the elementwise divergence and using $\divg\curl\bff\omega_h=0$ for all $\bff\omega_h\in\bb X_h^0$, we get
	\[
	\divg\bff B_h^n=\divg\bff B_h^{n-1}.
	\]
	The claim \eqref{equ:div B exact revised} follows by induction over $n$.
	
	\emph{Step 2: Discrete energy identity.}
	We define $\bff g_{i,h}^{n-1}:= \Pi_h^{\bb V} \bff g_i(\bff u_h^{n-1},\bff B_h^{n-1})$ and set
	\begin{align}\label{equ:G u G B revised}
		\bff G_{u,h}^n
		:=
		\sum_{i=1}^{\infty}\bff g_{i,h}^{n-1}\Delta_n\beta_i^u,
		\qquad
		\bff G_{B,h}^n
		:=
		\sum_{i=1}^{\infty}
		\Pi_h^{\bb X}\bigl(\sigma\bff\chi_i+\eta\bff\chi_i\times\mathcal R_h\bff B_h^{n-1}\bigr)\Delta_n\beta_i^B.
	\end{align}
	For ease of presentation, we define the discrete energy
	\[
	\mathcal E_h^n
	:=
	\frac12\norm{\bff u_h^n}{\bb L^2}^2
	+
	\frac12\norm{\bff B_h^n}{\bb L^2}^2,
	\]
	and take 
	\[
	\bff\phi_h=\bff u_h^n,\qquad
	q_h=p_h^n,\qquad
	\bff\psi_h=\bff B_h^n,\qquad
	\bff\chi_h=\bff J_h^n,\qquad
	\bff\omega_h=\bff E_h^n
	\]
	in the scheme \eqref{equ:stoch fem}. We note the identities
	\begin{align*}
	\inpro{\curl\bff E_h^n}{\bff B_h^n}
	&=
	\inpro{\bff E_h^n}{\bff J_h^n}
	=
	\sigma\norm{\bff J_h^n}{\bb L^2}^2
	-
	\inpro{\bff u_h^n\times\bff B_h^{n-1}}{\bff J_h^n},
	\end{align*}
	by the definition of $\curlh$ and Ohm's law \eqref{equ:stoch fem Ohm}. 
	Using the elementary vector identity
	\[
	\inpro{\bff a-\bff b}{\bff a}
	=
	\frac12\norm{\bff a}{\bb L^2}^2
	-
	\frac12\norm{\bff b}{\bb L^2}^2
	+
	\frac12\norm{\bff a-\bff b}{\bb L^2}^2,
	\]
	we obtain the pathwise energy identity
	\begin{align}\label{equ:energy identity revised}
		&\mathcal E_h^n-\mathcal E_h^{n-1}
		+
		\frac12\norm{\bff u_h^n-\bff u_h^{n-1}}{\bb L^2}^2
		+
		\frac12\norm{\bff B_h^n-\bff B_h^{n-1}}{\bb L^2}^2
		+
		\nu\tau\norm{\nabla\bff u_h^n}{\bb L^2}^2
		+
		\sigma\tau\norm{\bff J_h^n}{\bb L^2}^2
		\nonumber\\
		&\qquad
		=
		\tau\inpro{\bff f^n}{\bff u_h^n}
		+
		\inpro{\bff G_{u,h}^n}{\bff u_h^n}
		-
		\inpro{\curl\bff G_{B,h}^n}{\bff B_h^n}.
	\end{align}
	
	\emph{Step 3: Pathwise inequality with martingale increment.}
	Split
	\[
	\inpro{\bff G_{u,h}^n}{\bff u_h^n}
	=
	\inpro{\bff G_{u,h}^n}{\bff u_h^{n-1}}
	+
	\inpro{\bff G_{u,h}^n}{\bff u_h^n-\bff u_h^{n-1}},
	\]
	and
	\[
	-\inpro{\curl\bff G_{B,h}^n}{\bff B_h^n}
	=
	-\inpro{\curl\bff G_{B,h}^n}{\bff B_h^{n-1}}
	-
	\inpro{\curl\bff G_{B,h}^n}{\bff B_h^n-\bff B_h^{n-1}}.
	\]
	Define
	\begin{align}\label{equ:mart increment revised}
		M_h^n
		:=
		\inpro{\bff G_{u,h}^n}{\bff u_h^{n-1}}
		-
		\inpro{\curl\bff G_{B,h}^n}{\bff B_h^{n-1}}.
	\end{align}
	Young's inequality then yields
	\begin{align}\label{equ:pathwise inequality revised}
		&\mathcal E_h^n-\mathcal E_h^{n-1}
		+
		c_0\Big(
		\norm{\bff u_h^n-\bff u_h^{n-1}}{\bb L^2}^2
		+
		\norm{\bff B_h^n-\bff B_h^{n-1}}{\bb L^2}^2
		+
		\tau\norm{\nabla\bff u_h^n}{\bb L^2}^2
		+
		\tau\norm{\bff J_h^n}{\bb L^2}^2
		\Big)
		\nonumber\\
		&\qquad
		\le
		C\tau\norm{\bff f^n}{\widetilde{\bb H}^{-1}}^2
		+
		M_h^n
		+
		C\norm{\bff G_{u,h}^n}{\bb L^2}^2
		+
		C\norm{\curl\bff G_{B,h}^n}{\bb L^2}^2,
	\end{align}
	where $c_0>0$ depends on $\nu$ and $\sigma$ but not on $h,\tau,n$.
	
	\emph{Step 4: First moment estimate.}
	We note that the conditional It\^o isometry gives
	\begin{align}\label{equ:cond Ito u revised}
		\bb E\left[
		\norm{\bff G_{u,h}^n}{\bb L^2}^2\mid\mathscr F_{t_{n-1}}
		\right]
		=
		\tau\sum_{i=1}^{\infty}\norm{\bff g_{i,h}^{n-1}}{\bb L^2}^2,
	\end{align}
	and
	\begin{align}\label{equ:cond Ito B revised}
		\bb E\left[
		\norm{\curl\bff G_{B,h}^n}{\bb L^2}^2\mid\mathscr F_{t_{n-1}}
		\right]
		&=
		\tau\sum_{i=1}^{\infty}
		\norm{
			\curl\Pi_h^{\bb X}\bigl(\sigma\bff\chi_i+\eta\bff\chi_i\times\mathcal R_h\bff B_h^{n-1}\bigr)
		}{\bb L^2}^2
		\nonumber\\
		&\le
		C_\chi \tau \bigl(1+\norm{\bff{B}_h^{n-1}}{\bb{L}^2}^2\bigr)+\delta_\chi \tau \norm{\bff{J}_h^{n-1}}{\bb{L}^2}^2,
	\end{align}
	where we also used \eqref{equ:hall current coeff bound final}.

	Now, taking conditional expectation in \eqref{equ:pathwise inequality revised}, noting the fact that $\bb E[M_h^n\mid\mathscr F_{t_{n-1}}]=0$, using \eqref{equ:cond Ito u revised}--\eqref{equ:cond Ito B revised}, \eqref{equ:g growth}, and the tower property, we obtain
	\begin{align}\label{equ:first moment one step revised}
		&\bb E \left[\mathcal E_h^n\right]
		+
		c_0\bb E\Big[
		\norm{\bff u_h^n-\bff u_h^{n-1}}{\bb L^2}^2
		+
		\norm{\bff B_h^n-\bff B_h^{n-1}}{\bb L^2}^2
		+
		\tau\norm{\nabla\bff u_h^n}{\bb L^2}^2
		+
		\tau\norm{\bff J_h^n}{\bb L^2}^2
		\Big]
		\nonumber\\
		&\qquad
		\le
		(1+C\tau)\bb E\mathcal E_h^{n-1}
		+
		C\tau
		+
		C\tau\bb E\norm{\bff f^n}{\widetilde{\bb H}^{-1}}^2
		+
		C\delta_\chi\tau\bb E\norm{\bff J_h^{n-1}}{\bb L^2}^2.
	\end{align}
	Summing over $n=1,\ldots,m$, and absorbing the last term to the left-hand side when $\delta_\chi$ is sufficiently small, we obtain the first moment estimate by the discrete Gronwall lemma.

	\emph{Step 5: Quadratic stochastic remainders.}
	For $m=1,\ldots,N$, set
	\[
	Q_{u,m}
	:=
	\sum_{n=1}^{m}
	\norm{\bff G_{u,h}^n}{\bb L^2}^2,
	\qquad
	Q_{B,m}
	:=
	\sum_{n=1}^{m}
	\norm{\curl\bff G_{B,h}^n}{\bb L^2}^2.
	\]
	Conditionally on $\mathscr F_{t_{n-1}}$, the random variables
	$\bff G_{u,h}^n$ and $\curl\bff G_{B,h}^n$ are centred
	$\bb L^2$-valued Gaussian random variables. Hence, for every integer
	$p\ge1$,
	\begin{align}
		\label{equ:conditional Gaussian Gu}
		\bb E\left[
		\norm{\bff G_{u,h}^n}{\bb L^2}^{2p}
		\mid\mathscr F_{t_{n-1}}
		\right]
		&\le
		C_p\tau^p
		\left(
		\sum_{i=1}^{\infty}
		\norm{\bff g_{i,h}^{n-1}}{\bb L^2}^2
		\right)^p,
		\\
		\label{equ:conditional Gaussian GB}
		\bb E\left[
		\norm{\curl\bff G_{B,h}^n}{\bb L^2}^{2p}
		\mid\mathscr F_{t_{n-1}}
		\right]
		&\le
		C_p\tau^p
		\left(
		\sum_{i=1}^{\infty}
		\norm{
			\curl\Pi_h^{\bb X}
			\bigl(
			\sigma\bff\chi_i
			+
			\eta\bff\chi_i\times\mathcal R_h\bff B_h^{n-1}
			\bigr)}
		{\bb L^2}^2
		\right)^p.
	\end{align}
	The corresponding identities for $p=1$ are
	\eqref{equ:cond Ito u revised} and \eqref{equ:cond Ito B revised}.
	
	For $p>1$, the elementary inequality
	\[
	(x+y)^p-x^p
	\le
	C_px^{p-1}y+C_py^p,
	\qquad x,y\ge0,
	\]
	gives
	\[
	Q_{u,n}^p-Q_{u,n-1}^p
	\le
	C_pQ_{u,n-1}^{p-1}
	\norm{\bff G_{u,h}^n}{\bb L^2}^2
	+
	C_p\norm{\bff G_{u,h}^n}{\bb L^2}^{2p}.
	\]
	Taking conditional expectations, summing over $n=1,\ldots,m$, and using
	$Q_{u,n-1}\le Q_{u,m}$, we obtain
	\begin{align*}
		\bb E[Q_{u,m}^p]
		&\le
		C_p\bb E\left[
		Q_{u,m}^{p-1}
		\sum_{n=1}^{m}\tau
		\sum_{i=1}^{\infty}
		\norm{\bff g_{i,h}^{n-1}}{\bb L^2}^2
		\right]
		+
		C_p\bb E\left[
		\sum_{n=1}^{m}
		\left(
		\tau\sum_{i=1}^{\infty}
		\norm{\bff g_{i,h}^{n-1}}{\bb L^2}^2
		\right)^p
		\right].
	\end{align*}
	Since $\sum_{n=1}^{m}a_n^p
	\le
	\left(\sum_{n=1}^{m}a_n\right)^p$ for $a_n\ge0$,
	Young's inequality allows the first term on the right-hand side to be absorbed, yielding
	\begin{align}
		\label{equ:Q u p bound revised}
		\bb E[Q_{u,m}^p]
		\le
		C_p\bb E\left[
		\left(
		\sum_{n=1}^{m}\tau
		\sum_{i=1}^{\infty}
		\norm{\bff g_{i,h}^{n-1}}{\bb L^2}^2
		\right)^p
		\right]
		\le
		C_{p,T}
		\left[
		1+
		\sum_{n=1}^{m}\tau
		\bb E\left[(\mathcal E_h^{n-1})^p\right]
		\right],
	\end{align}
	where we also used \eqref{equ:g growth} and
	Lemma~\ref{lem:weighted-discrete-holder}.

	Repeating the same argument with
	$\norm{\curl\bff G_{B,h}^n}{\bb L^2}$ gives
	\begin{align}\label{equ:Q B p bound revised}
		\bb E[Q_{B,m}^p]
		&\le
		C_p\bb E\left[
		\left(
		\sum_{n=1}^{m}\tau
		\sum_{i=1}^{\infty}
		\norm{
			\curl\Pi_h^{\bb X}
			\bigl(
			\sigma\bff\chi_i
			+
			\eta\bff\chi_i\times\mathcal R_h\bff B_h^{n-1}
			\bigr)}
		{\bb L^2}^2
		\right)^p
		\right]
		\nonumber\\
		&\leq
		C_{p,T}
		\left[
		1+
		\sum_{n=1}^{m}\tau
		\bb E\left[(\mathcal E_h^{n-1})^p\right]
		\right]
		+
		C_p\delta_\chi^p
		\bb E\left[
		\left(
		\sum_{n=1}^{m}\tau
		\norm{\bff J_h^{n-1}}{\bb L^2}^2
		\right)^p
		\right],
	\end{align}
	where in the last step we used \eqref{equ:hall current coeff bound final}, Lemma~\ref{lem:weighted-discrete-holder}, and the fact that
	$\norm{\bff B_h^{n-1}}{\bb L^2}^2\le2\mathcal E_h^{n-1}$.
	The final term is retained until the conclusion of the proof, where it is absorbed into the dissipation moment.

	\emph{Step 6: Martingale estimate.}
	Let
	\[
	S_m:=\max_{0\le j\le m}\mathcal E_h^j.
	\]
	Since $\{M_h^n\}_{n=1}^{N}$ defined in
	\eqref{equ:mart increment revised} is a real-valued martingale difference
	sequence, we can apply Lemma~\ref{lem:discrete-BDG}. To this end, note that by \eqref{equ:mart increment revised} and the Cauchy--Schwarz inequality,
	\begin{align*}
		\abs{M_h^n}^2
		&\le
		2\norm{\bff G_{u,h}^n}{\bb L^2}^2
		\norm{\bff u_h^{n-1}}{\bb L^2}^2
		+
		2\norm{\curl\bff G_{B,h}^n}{\bb L^2}^2
		\norm{\bff B_h^{n-1}}{\bb L^2}^2
		\\
		&\le
		C\mathcal E_h^{n-1}
		\left(
		\norm{\bff G_{u,h}^n}{\bb L^2}^2
		+
		\norm{\curl\bff G_{B,h}^n}{\bb L^2}^2
		\right).
	\end{align*}
	Consequently,
	\[
	\sum_{n=1}^{m}\abs{M_h^n}^2
	\le
	CS_m\left(Q_{u,m}+Q_{B,m}\right).
	\]
	Using Lemma~\ref{lem:discrete-BDG}, Young's inequality, and
	\eqref{equ:Q u p bound revised}--\eqref{equ:Q B p bound revised}, we obtain
	\begin{align*}
		\bb E\left[
		\max_{1\le r\le m}
		\abs{\sum_{n=1}^{r}M_h^n}^{p}
		\right]
		&\le
		C_p\bb E\left[
		S_m^{p/2}
		\left(Q_{u,m}+Q_{B,m}\right)^{p/2}
		\right]
		\nonumber\\
		&\le
		\varepsilon\bb E[S_m^p]
		+
		C_{\varepsilon,p}
		\bb E\left[
		\left(Q_{u,m}+Q_{B,m}\right)^p
		\right]
		\nonumber\\
		&\le
		\varepsilon\bb E[S_m^p]
		+
		C_{\varepsilon,p,T}
		\left[
		1+
		\sum_{n=1}^{m}\tau
		\bb E\left[(\mathcal E_h^{n-1})^p\right]
		\right]
		+
		C_{\varepsilon,p}\delta_\chi^p
		\bb E\left[
		\left(
		\sum_{n=1}^{m}\tau
		\norm{\bff J_h^{n-1}}{\bb L^2}^2
		\right)^p
		\right].
	\end{align*}

	\emph{Step 7: Conclusion.}
	The case $p=1$ was established in Step~4, so let $p>1$. For
	$m=1,\ldots,N$, let $S_m$ be as in Step 6, and define
	\begin{align*}
		D_m &:=
		\sum_{n=1}^{m}
		\left(
		\norm{\bff u_h^n-\bff u_h^{n-1}}{\bb L^2}^2
		+
		\norm{\bff B_h^n-\bff B_h^{n-1}}{\bb L^2}^2
		\right)
		+
		\sum_{n=1}^{m}\tau
		\left(
		\nu\norm{\nabla\bff u_h^n}{\bb L^2}^2
		+
		\sigma\norm{\bff J_h^n}{\bb L^2}^2
		\right).
	\end{align*}
	Summing \eqref{equ:pathwise inequality revised} from $n=1$ to $m$,
	taking the maximum over the partial sums of the martingale term, raising
	to the power $p$, and using Steps~5--6 yields, for any sufficiently small
	$\varepsilon>0$,
	\begin{align}
		\label{equ:stability conclusion before absorption}
		\bb E[S_m^p]
		+
		c_p\bb E[D_m^p]
		&\le
		C_{p,T}
		\left(
		1
		+
		\bb E[(\mathcal E_h^0)^p]
		+
		\left(
		\sum_{n=1}^{N}
		\tau\norm{\bff f^n}{\bb H^{-1}}^2
		\right)^p
		\right)
		+
		C_{p,T}
		\sum_{n=1}^{m}
		\tau\bb E[(\mathcal E_h^{n-1})^p]
		+
		\varepsilon\bb E[S_m^p]
		\nonumber\\
		&\quad
		+
		C_{\varepsilon,p}\delta_\chi^p
		\bb E\left[
		\left(
		\sum_{n=1}^{m}
		\tau\norm{\bff J_h^{n-1}}{\bb L^2}^2
		\right)^p
		\right],
	\end{align}
	where $c_p>0$ is independent of $h$, $\tau$, and $m$.
	
	The shifted current term satisfies
	\begin{align*}
		\sum_{n=1}^{m}
		\tau\norm{\bff J_h^{n-1}}{\bb L^2}^2
		=
		\tau\norm{\bff J_h^0}{\bb L^2}^2
		+
		\sum_{n=1}^{m-1}
		\tau\norm{\bff J_h^n}{\bb L^2}^2
		\le
		\tau\norm{\bff J_h^0}{\bb L^2}^2
		+
		\frac{1}{\sigma}D_m.
	\end{align*}
	Consequently,
	\[
	\bb E\left[
	\left(
	\sum_{n=1}^{m}
	\tau\norm{\bff J_h^{n-1}}{\bb L^2}^2
	\right)^p
	\right]
	\le
	C_p\bb E\left[
	\left(
	\tau\norm{\bff J_h^0}{\bb L^2}^2
	\right)^p
	\right]
	+
	C_{p,\sigma}\bb E[D_m^p].
	\]
	We first choose $\varepsilon>0$ sufficiently small and then choose
	$\delta_p^\ast>0$ sufficiently small such that, whenever
	$\delta_\chi\le\delta_p^\ast$, the terms involving
	$\bb E[S_m^p]$ and $\bb E[D_m^p]$ on the right-hand side of
	\eqref{equ:stability conclusion before absorption} can be absorbed into
	the left-hand side. We thus obtain
	\begin{align*}
		\bb E[S_m^p]
		+
		\bb E[D_m^p]
		&\le
		C_{p,T}
		\left(
		1
		+
		\bb E[(\mathcal E_h^0)^p]
		+
		\bb E\left[
		\left(
		\tau\norm{\bff J_h^0}{\bb L^2}^2
		\right)^p
		\right]
		+
		\left(
		\sum_{n=1}^{N}
		\tau\norm{\bff f^n}{\bb H^{-1}}^2
		\right)^p
		\right)
		\\
		&\quad
		+
		C_{p,T}
		\sum_{n=1}^{m}
		\tau\bb E[(\mathcal E_h^{n-1})^p].
	\end{align*}
	Since $(\mathcal E_h^{n-1})^p\le S_{n-1}^p$, the discrete Gronwall
	lemma yields \eqref{equ:stoch stability revised main}. This completes the
	proof.
\end{proof}

\section{Convergence to a martingale solution}
\label{sec:stoch convergence}

In this section we prove that the finite element approximations generated by \eqref{equ:stoch fem} converge, along a subsequence, to a weak martingale solution of \eqref{equ:stoch mhd}. We consider a sequence of discretisation parameters, still denoted by $(h,\tau)$, such that $h\to0$ and $\tau\to0$.

\subsection{Interpolants}

For $t\in(t_{n-1},t_n]$, define the interpolants
\begin{align}\label{equ:conv interpolants}
	\bff{u}_{h,\tau}^{+}(t):=\bff{u}_h^n,\qquad \bff{u}_{h,\tau}^{-}(t):=\bff{u}_h^{n-1},\qquad
	\bff{B}_{h,\tau}^{+}(t):=\bff{B}_h^n,\qquad 
	\bff{B}_{h,\tau}^{-}(t):=\bff{B}_h^{n-1},
\end{align}
and
\begin{align}\label{equ:conv interpolants J}
	\bff{f}_\tau^+ := \bff{f}^n, \qquad
	\bff{J}_{h,\tau}^{+}(t):=\bff{J}_h^n,\qquad \bff{J}_{h,\tau}^{-}(t):=\bff{J}_h^{n-1}.
\end{align}
We also set for $t\in(t_{n-1},t_n]$,
\begin{align}\label{equ:conv noise coefficients}
	\bff{g}_{i,h,\tau}(t):= \Pi_h^{\bb{V}} \bff{g}_i(\bff{u}_h^{n-1},\bff{B}_h^{n-1}),\qquad
	\bff{H}_{i,h,\tau}(t):= \Pi_h^{\bb{X}}\bigl(\sigma\bff{\chi}_i+\eta\bff{\chi}_i\times\mathcal{R}_h\bff{B}_h^{n-1}\bigr).
\end{align}
Furthermore, for any sequence of time-discrete functions $\{\bff{v}_h^n\}$, we define the continuous piecewise affine interpolant
\[
\bff v_{h,\tau}(t)
:=
\frac{t-t_{n-1}}{\tau}\bff v_h^n+\frac{t_n-t}{\tau}\bff v_h^{n-1}.
\]

Recall that from Proposition~\ref{prop:stoch stability revised}, we obtain for every fixed $p\ge1$,
\begin{equation}\label{equ:conv uniform bounds}
\begin{aligned}
	\bb{E}\left[
	\norm{\bff{u}_{h,\tau}^{+}}{L^\infty(0,T;\bb{L}^2)}^{2p}
	+
	\norm{\bff{B}_{h,\tau}^{+}}{L^\infty(0,T;\bb{L}^2)}^{2p}
	+
	\norm{\bff{u}_{h,\tau}^{+}}{L^2(0,T;\bb{H}_0^1)}^{2p}
	+
	\norm{\bff{J}_{h,\tau}^{+}}{L^2(0,T;\bb{L}^2)}^{2p}
	\right]
	&\leq C,
	\\
	\bb{E}\left[
	\left(\sum_{n=1}^{N}
	\norm{\bff{u}_h^n-\bff{u}_h^{n-1}}{\bb{L}^2}^2
	\right)^p
	+
	\left(\sum_{n=1}^{N}
	\norm{\bff{B}_h^n-\bff{B}_h^{n-1}}{\bb{L}^2}^2
	\right)^p
	\right]
	&\leq C.
\end{aligned}
\end{equation}
In addition, the shifted interpolants satisfy, for every fixed $p\ge1$,
\begin{align}\label{equ:conv shifted uniform bounds}
	\bb E\left[
	\norm{\bff u_{h,\tau}^{-}}{L^\infty(0,T;\bb L^2)}^{2p}
	+
	\norm{\bff B_{h,\tau}^{-}}{L^\infty(0,T;\bb L^2)}^{2p}
	+
	\norm{\bff J_{h,\tau}^{-}}{L^2(0,T;\bb L^2)}^{2p}
	\right]
	\le C.
\end{align}
Consequently, as $\tau\to0$,
\begin{align}\label{equ:conv left right vanish}
	&\bb{E}\left[
	\norm{\bff{u}_{h,\tau}^{+}-\bff{u}_{h,\tau}^{-}}
	{L^2(0,T;\bb{L}^2)}^{2p}
	+
	\norm{\bff{B}_{h,\tau}^{+}-\bff{B}_{h,\tau}^{-}}
	{L^2(0,T;\bb{L}^2)}^{2p}
	\right] \leq C\tau^p \to 0.
\end{align}
Moreover, we have
\begin{align}\label{equ:conv discrete divB}
	\divg\bff{B}_{h,\tau}^{+}=0
	\quad\text{elementwise},
	\qquad
	\bff{B}_{h,\tau}^{+}\cdot\bff{n}=0
	\quad\text{on }\partial\mathscr{D}.
\end{align}
The same statements hold for $\bff B_{h,\tau}^{-}$.

Since the affine interpolants are convex combinations of the left and right piecewise constant interpolants, the same uniform bounds hold for $\bff u_{h,\tau}$ and $\bff B_{h,\tau}$ in $L^{2p}(\Omega;L^\infty(0,T;\bb L^2))$. Moreover,
\begin{equation}\label{equ:uht plus minus}
\norm{\bff u_{h,\tau}-\bff u_{h,\tau}^{+}}
{L^2(0,T;\bb L^2)}
+
\norm{\bff u_{h,\tau}-\bff u_{h,\tau}^{-}}
{L^2(0,T;\bb L^2)}
\le
2\norm{\bff u_{h,\tau}^{+}-\bff u_{h,\tau}^{-}}
{L^2(0,T;\bb L^2)},
\end{equation}
and similarly for $\bff B_{h,\tau}$. Hence, by
\eqref{equ:conv left right vanish}, we have
\[
\bff u_{h,\tau}-\bff u_{h,\tau}^{+}\to0,\qquad
\bff u_{h,\tau}-\bff u_{h,\tau}^{-}\to0,\qquad
\bff B_{h,\tau}-\bff B_{h,\tau}^{+}\to0,\qquad
\bff B_{h,\tau}-\bff B_{h,\tau}^{-}\to0
\]
strongly in $L^{2p}\bigl(\Omega;L^2(0,T;\bb L^2)\bigr)$. Note that we also have $\bff f_\tau^+\to \bff f$ strongly in $L^2_T(\bb L^2)$.

\subsection{Compactness estimates}

Let $m\ge3$ be fixed. We define the smooth solenoidal test space
\[
\bb V^m_\sigma:=\{\bff\phi\in \bb H^m(\mathscr D)\cap \bb H_0^1(\mathscr D):\divg\bff\phi=0\},
\]
equipped with the $\bb H^m$ norm, and denote its dual by $(\bb{V}^m_{\sigma})'$. We note the continuous embeddings $\bb{V}^m_\sigma \hookrightarrow \bb H^m \hookrightarrow \bb W^{1,\infty}$.
The following lemma records the time-translate estimates needed for compactness argument.

\begin{lemma}[Time-translate estimates]
	\label{lem:conv time translation}
	Let $m\ge3$. There exists $C>0$, independent of $h$ and $\tau$, such that
	for every $\kappa\in (0,T)$,
	\begin{align}
		\label{equ:conv time translation u}
		\bb E \left[
		\int_0^{T-\kappa}
		\norm{\bff u_{h,\tau}^{+}(t+\kappa)-\bff u_{h,\tau}^{+}(t)}
		{(\bb V_\sigma^m)'}^2 \dt
		\right]
		&\le C(\kappa+\tau+h^2),
		\\
		\label{equ:conv time translation B}
		\bb E \left[
		\int_0^{T-\kappa}
		\norm{\bff B_{h,\tau}^{+}(t+\kappa)-\bff B_{h,\tau}^{+}(t)}
		{\widetilde{\bb H}^{-m}}^2 \dt
		\right]
		&\le C(\kappa+\tau+h^2).
	\end{align}
\end{lemma}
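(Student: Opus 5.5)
We work with the step-function interpolant. For $t\in(t_{j-1},t_j]$ and $t+\kappa\in(t_{k-1},t_k]$ we write $\bff u_{h,\tau}^+(t+\kappa)-\bff u_{h,\tau}^+(t)=\sum_{n=j+1}^{k}(\bff u_h^n-\bff u_h^{n-1})$. The number of summed steps is at most $\kappa/\tau+1$. We then represent this telescoping sum through the scheme, tested against suitable discrete approximations of the smooth test functions.

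\textbf{Velocity.} Fix $\bff\phi\in\bb V^m_\sigma$ with $\norm{\bff\phi}{\bb H^m}\le1$. We take $\bff\phi_h:=\mathcal S_h\bff\phi$, the Stokes projection with $q=0$, which is discretely divergence free. We split
\[
\inpro{\bff u_h^k-\bff u_h^j}{\bff\phi}=\inpro{\bff u_h^k-\bff u_h^j}{\bff\phi-\mathcal S_h\bff\phi}+\inpro{\bff u_h^k-\bff u_h^j}{\mathcal S_h\bff\phi}.
\]
By \eqref{equ:Stokes approx}, the first term is bounded by $Ch\,\norm{\bff u_h^k-\bff u_h^j}{\bb L^2}$. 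Squared and integrated, this gives the $h^2$ contribution, using the $L^\infty(\bb L^2)$ moment bound \eqref{equ:conv uniform bounds}. The pressure term vanishes against $\mathcal S_h\bff\phi$, since $\mathcal S_h\bff\phi\in\bb V_{h,\sigma}$. For the second term we sum \eqref{equ:stoch fem u} over $n=j+1,\dots,k$ and use the $\bb L^\infty$ and $\bb W^{1,3}$ bounds \eqref{equ:Stokes bdd} to estimate the deterministic terms:
\begin{itemize}
\item the viscous term by $\tau\sum\norm{\nabla\bff u_h^n}{\bb L^2}$;
\item the skew convection by $\tau\sum\norm{\bff u_h^{n-1}}{\bb L^2}\norm{\bff u_h^n}{\bb H^1}$, with $\bb L^6$--$\bb L^3$ pairings against $\nabla\mathcal S_h\bff\phi\in\bb L^3$ and $\mathcal S_h\bff\phi\in\bb L^\infty$;
\item the Lorentz term by $\tau\sum\norm{\bff J_h^n}{\bb L^2}\norm{\bff B_h^{n-1}}{\bb L^2}$;
\item the forcing term by $\tau\sum\norm{\bff f^n}{\bb L^2}$.
\end{itemize}
Cauchy--Schwarz over at most $\kappa/\tau+1$ steps bounds each squared sum by $(\kappa+\tau)\sum_n\tau(\cdots)^2$. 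Taking the supremum over $\bff\phi$ and then expectations, the moments \eqref{equ:conv uniform bounds}--\eqref{equ:conv shifted uniform bounds} with $p=2$ bound the products (e.g. $\norm{\bff B}{L^\infty\bb L^2}^2\norm{\bff J}{L^2\bb L^2}^2$ via Cauchy--Schwarz in $\Omega$). The $t$-integral then only contributes a factor $T$.

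\textbf{Stochastic term.} For the martingale part, the supremum over $\bff\phi$ is awkward. We avoid it by bounding the norm directly: $\abs{\inpro{\bff G}{\mathcal S_h\bff\phi}}\le C\norm{\bff G}{\bb L^2}$, so we estimate $\bb E\norm{\sum_{n=j+1}^k\bff G_{u,h}^n}{\bb L^2}^2$. By the discrete It\^o isometry (the increments are orthogonal) this equals $\bb E\sum_n\tau\sum_i\norm{\bff g_{i,h}^{n-1}}{\bb L^2}^2\le C(\kappa+\tau)\bigl(1+\bb E\max_n\mathcal E_h^n\bigr)$, by \eqref{equ:g growth}. There is one subtlety: $j,k$ depend on $t$ but are deterministic, so the isometry applies pointwise in $t$ before integrating.

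\textbf{Magnetic field.} Take $\bff\psi\in\bb H^m$ with $\norm{\bff\psi}{\bb H^m}\le1$ and split with $\Pi_h^{\bb{RT}}\bff\psi$. The projection error gives $Ch$ times $\bb L^2$ norms. One would like \eqref{equ:conv magnetic projection estimates}-type $\bb{RT}$ approximation here; alternatively, use the canonical interpolant, after noting that $\bff B_h^k-\bff B_h^j\in\bb{RT}_h^0$ makes only its projection matter. Testing \eqref{equ:stoch fem B} with $\Pi_h^{\bb{RT}}\bff\psi$ turns $\inpro{\curl\bff E_h^n}{\Pi_h^{\bb{RT}}\bff\psi}$ into $\inpro{\bff E_h^n}{\curlh\Pi_h^{\bb{RT}}\bff\psi}=\inpro{\bff E_h^n}{\Pi_h^{\bb X}\curl\bff\psi}$ by \eqref{equ:conv curlh projection}. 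Ohm's law \eqref{equ:stoch fem Ohm}, tested with $\bff\chi_h=\Pi_h^{\bb X}\curl\bff\psi$, then replaces $\bff E_h^n$ by $\sigma\bff J_h^n+\eta\bff J_h^n\times\bff B_h^{n-1}-\bff u_h^n\times\bff B_h^{n-1}$. With $\norm{\Pi_h^{\bb X}\curl\bff\psi}{\bb L^\infty}\le C$ from \eqref{equ:stab magnetic projection} and $m\ge3$, these terms are bounded by $\norm{\bff J_h^n}{\bb L^2}(1+\norm{\bff B_h^{n-1}}{\bb L^2})+\norm{\bff u_h^n}{\bb L^2}\norm{\bff B_h^{n-1}}{\bb L^2}$. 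They are handled exactly as in the velocity case.

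The noise term is $\inpro{\curl\bff G_{B,h}}{\Pi_h^{\bb{RT}}\bff\psi}$. We bound it by $\norm{\curl\sum\bff G_{B,h}^n}{\bb L^2}$ and use the isometry together with \eqref{equ:hall current coeff bound final}. This gives $C\tau\sum_{n}(1+\norm{\bff B_h^{n-1}}{\bb L^2}^2+\norm{\bff J_h^{n-1}}{\bb L^2}^2)$ over the window.

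\textbf{Main obstacle.} We expect the main obstacle to be this last window sum, since $\norm{\bff J}{\bb L^2}^2$ is only integrable in time, not bounded. Integrating over $t$ and exchanging the order of summation resolves it: each index $n$ lies in at most $\kappa/\tau+1$ windows, so
\[
\int_0^{T-\kappa}\tau\sum_{\text{window}}\norm{\bff J_h^{n-1}}{\bb L^2}^2\dt\le(\kappa+\tau)\sum_n\tau\norm{\bff J_h^{n-1}}{\bb L^2}^2.
\]
This is bounded in expectation by \eqref{equ:conv shifted uniform bounds}. The same Fubini counting should be used for all deterministic terms as well.
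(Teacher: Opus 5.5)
Your proposal is correct and follows essentially the paper's argument. The ingredients are the same: Stokes-projected test functions with an $O(h)$ splitting for the velocity, and $\Pi_h^{\bb{RT}}\bff\psi$ combined with \eqref{equ:conv curlh projection} and the discrete Ohm law for the magnetic field. The noise terms are handled by It\^o isometry, and a finite-overlap (Fubini) count controls the window sums of $\norm{\bff J_h^{n-1}}{\bb L^2}^2$, which are not uniformly bounded in time. The differences are only bookkeeping: you work directly with $t$-dependent windows, whereas the paper first treats grid translations $\kappa=\ell\tau$ and then passes to general $\kappa$. Also, for $\bff B$ the projection-error term is identically zero because $\bff B_h^k-\bff B_h^j\in\bb{RT}_h^0$, so no $\bb{RT}$ approximation estimate (and no $h^2$ contribution) is needed there.
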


\begin{proof}
	We first prove the estimates for grid translations. Let $\kappa=\ell\tau$,
	where $\ell\in\{1,\ldots,N\}$.
	
	\emph{Velocity estimate.}
	Fix $n\in\{0,\ldots,N-\ell\}$ and let
	$\bff\phi\in\bb V_\sigma^m$ with $\norm{\bff\phi}{\bb H^m}\le1$.
	Let $(\bff\phi_h,\pi_h):=\mathcal S_h(\bff\phi,0)$ be the Stokes projection of
	$(\bff\phi,0)$ onto $\bb V_h\times Q_h$. Since $\bff\phi$ is solenoidal, we
	have $\bff\phi_h\in\bb V_{h,\sigma}$.
	
	Summing \eqref{equ:stoch fem u} from $r=n+1$ to $r=n+\ell$, we obtain
	\begin{align}
		\label{equ:conv velocity increment testing}
		\inpro{\bff u_h^{n+\ell}-\bff u_h^n}{\bff\phi_h}
		&=
		-\nu\sum_{r=n+1}^{n+\ell}\tau
		\inpro{\nabla\bff u_h^r}{\nabla\bff\phi_h}
		-\frac12\sum_{r=n+1}^{n+\ell}\tau
		\left[
		\inpro{(\bff u_h^{r-1}\cdot\nabla)\bff u_h^r}{\bff\phi_h}
		-
		\inpro{(\bff u_h^{r-1}\cdot\nabla)\bff\phi_h}{\bff u_h^r}
		\right]
		\nonumber\\
		&\quad
		+\sum_{r=n+1}^{n+\ell}\tau
		\inpro{\bff J_h^r\times\bff B_h^{r-1}}{\bff\phi_h}
		+
		\sum_{r=n+1}^{n+\ell}\tau
		\inpro{\bff f^r}{\bff\phi_h}
		\nonumber\\
		&\quad
		+
		\sum_{r=n+1}^{n+\ell}\sum_{i=1}^{\infty}
		\inpro{\bff g_{i,h}^{r-1}}{\bff\phi_h}\Delta_r\beta_i^u,
	\end{align}
	where $\bff g_{i,h}^{r-1}
	:=
	\Pi_h^{\bb V}
	\bff g_i(\bff u_h^{r-1},\bff B_h^{r-1})$.
	We will estimate each term in \eqref{equ:conv velocity increment testing}.
	First, we write
	\begin{equation}\label{equ:uhnl phi}
	\inpro{\bff u_h^{n+\ell}-\bff u_h^n}{\bff\phi_h}
		=
	\inpro{\bff u_h^{n+\ell}-\bff u_h^n}{\bff\phi}
	-
	\inpro{\bff u_h^{n+\ell}-\bff u_h^n}{\bff\phi-\bff\phi_h}.
	\end{equation}
	By the approximation property \eqref{equ:Stokes approx} and Cauchy--Schwarz inequality,
	\[
	\abs{
		\inpro{\bff u_h^{n+\ell}-\bff u_h^n}{\bff\phi-\bff\phi_h}
	}
	\le
	Ch\left(
	\norm{\bff u_h^{n+\ell}}{\bb L^2}
	+
	\norm{\bff u_h^n}{\bb L^2}
	\right).
	\]
	By \eqref{equ:Stokes approx}, \eqref{equ:Stokes bdd}, and the embedding
	$\bb H^m\hookrightarrow \bb W^{1,\infty}$, noting that $\norm{\bff\phi}{\bb H^m}\leq 1$, we have
	\begin{equation}\label{equ:phi bound}
	\norm{\bff\phi_h}{\bb H^1}
	+
	\norm{\bff\phi_h}{\bb L^\infty}
	+
	\norm{\bff\phi_h}{\bb W^{1,3}}
	\le
	C\norm{\bff\phi}{\bb H^m}
	\le
	C,
	\end{equation}
	where $C$ is independent of $\bff{\phi}$.
	Therefore,
	\begin{align*}
		\abs{
			\inpro{(\bff u_h^{r-1}\cdot\nabla)\bff u_h^r}{\bff\phi_h}
		}
		&\le
		\norm{\bff u_h^{r-1}}{\bb L^2}
		\norm{\nabla\bff u_h^r}{\bb L^2}
		\norm{\bff\phi_h}{\bb L^\infty}
		\le
		C\norm{\bff u_h^{r-1}}{\bb L^2}
		\norm{\nabla\bff u_h^r}{\bb L^2},
	\end{align*}
	and similarly, using the Sobolev embedding $\bb H_0^1\hookrightarrow \bb L^6$,
	\begin{align*}
		\abs{
			\inpro{(\bff u_h^{r-1}\cdot\nabla)\bff\phi_h}{\bff u_h^r}
		}
		&\le
		\norm{\bff u_h^{r-1}}{\bb L^2}
		\norm{\nabla\bff\phi_h}{\bb L^3}
		\norm{\bff u_h^r}{\bb L^6}
		\le
		C\norm{\bff u_h^{r-1}}{\bb L^2}
		\norm{\nabla\bff u_h^r}{\bb L^2}.
	\end{align*}
	Furthermore, by \eqref{equ:phi bound} and H\"older's inequality, we have the estimates
	\[
	\abs{
		\inpro{\bff J_h^r\times\bff B_h^{r-1}}{\bff\phi_h}
	}
	\le
	C\norm{\bff J_h^r}{\bb L^2}
	\norm{\bff B_h^{r-1}}{\bb L^2},
	\]
	and
	\[
	\abs{\inpro{\bff f^r}{\bff\phi_h}}
	\le
	\norm{\bff f^r}{\widetilde{\bb H}^{-1}}
	\norm{\bff\phi_h}{\bb H^1}
	\le
	C\norm{\bff f^r}{\widetilde{\bb H}^{-1}}.
	\]
	Taking the supremum over
	$\bff\phi\in\bb V_\sigma^m$ with $\norm{\bff\phi}{\bb H^m}\le1$, noting \eqref{equ:uhnl phi} and \eqref{equ:phi bound}, we infer
	\begin{align}
		\label{equ:conv velocity increment bound}
		\norm{\bff u_h^{n+\ell}-\bff u_h^n}{(\bb{V}^m_{\sigma})'}
		&\le
		Ch\left(
		\norm{\bff u_h^{n+\ell}}{\bb L^2}
		+
		\norm{\bff u_h^n}{\bb L^2}
		\right)
		+
		C\sum_{r=n+1}^{n+\ell}\tau
		\norm{\nabla\bff u_h^r}{\bb L^2}
		\nonumber\\
		&\quad
		+
		C\sum_{r=n+1}^{n+\ell}\tau
		\norm{\bff u_h^{r-1}}{\bb L^2}
		\norm{\nabla\bff u_h^r}{\bb L^2}
		+
		C\sum_{r=n+1}^{n+\ell}\tau
		\norm{\bff J_h^r}{\bb L^2}
		\norm{\bff B_h^{r-1}}{\bb L^2}
		\nonumber\\
		&\quad
		+
		C\sum_{r=n+1}^{n+\ell}\tau
		\norm{\bff f^r}{\widetilde{\bb H}^{-1}}
		+
		\sup_{\substack{\bff\phi\in\bb V_\sigma^m\\
				\norm{\bff\phi}{\bb H^m}\le1}}
		\abs{
			\sum_{r=n+1}^{n+\ell}\sum_{i=1}^{\infty}
			\inpro{\bff g_{i,h}^{r-1}}{\mathcal S_h\bff\phi}
			\Delta_r\beta_i^u
		}.
	\end{align}
	
	Using Cauchy's inequality in the time sums gives
	\begin{align*}
		\left(
		\sum_{r=n+1}^{n+\ell}\tau
		\norm{\nabla\bff u_h^r}{\bb L^2}
		\right)^2
		&\le
		\ell\tau
		\sum_{r=n+1}^{n+\ell}\tau
		\norm{\nabla\bff u_h^r}{\bb L^2}^{2},
		\\
		\left(
		\sum_{r=n+1}^{n+\ell}\tau
		\norm{\bff u_h^{r-1}}{\bb L^2}
		\norm{\nabla\bff u_h^r}{\bb L^2}
		\right)^2
		&\le
		\ell\tau
		\max_{0\le j\le N}
		\norm{\bff u_h^j}{\bb L^2}^{2}
		\sum_{r=n+1}^{n+\ell}\tau
		\norm{\nabla\bff u_h^r}{\bb L^2}^{2},
		\\
		\left(
		\sum_{r=n+1}^{n+\ell}\tau
		\norm{\bff J_h^r}{\bb L^2}
		\norm{\bff B_h^{r-1}}{\bb L^2}
		\right)^2
		&\le
		\ell\tau
		\max_{0\le j\le N}
		\norm{\bff B_h^j}{\bb L^2}^{2}
		\sum_{r=n+1}^{n+\ell}\tau
		\norm{\bff J_h^r}{\bb L^2}^{2},
		\\
		\left(
		\sum_{r=n+1}^{n+\ell}\tau
		\norm{\bff f^r}{\widetilde{\bb H}^{-1}}
		\right)^2
		&\le
		\ell\tau
		\sum_{r=n+1}^{n+\ell}\tau
		\norm{\bff f^r}{\widetilde{\bb H}^{-1}}^2.
	\end{align*}
	For the stochastic term, since
	$\norm{\mathcal S_h\bff\phi}{\bb L^2}\le C\norm{\bff\phi}{\bb H^m}$, the
	conditional It\^o isometry, and the growth assumption \eqref{equ:g growth}
	imply
	\begin{align*}
		\bb E
		\left[
		\sup_{\substack{\bff\phi\in\bb V_\sigma^m\\
				\norm{\bff\phi}{\bb H^m}\le1}}
		\abs{
			\sum_{r=n+1}^{n+\ell}\sum_{i=1}^{\infty}
			\inpro{\bff g_{i,h}^{r-1}}{\mathcal S_h\bff\phi}
			\Delta_r\beta_i^u
		}^{2}
		\right]
		&\le
		C\bb E
		\norm{
			\sum_{r=n+1}^{n+\ell}\sum_{i=1}^{\infty}
			\bff g_{i,h}^{r-1}\Delta_r\beta_i^u
		}{\bb L^2}^{2}
		\\
		&\le
		C\bb E
		\sum_{r=n+1}^{n+\ell}\tau
		\sum_{i=1}^{\infty}
		\norm{\bff g_{i,h}^{r-1}}{\bb L^2}^{2}
		\\
		&\le
		C\bb E
		\sum_{r=n+1}^{n+\ell}\tau
		\left(1+\mathcal E_h^{r-1}\right).
	\end{align*}
	
	We now multiply the square of \eqref{equ:conv velocity increment bound} by $\tau$
	and sum over $n=0,\ldots,N-\ell$. We use an elementary finite-overlap estimate
	\begin{equation}\label{equ:finite overlap}
	\sum_{n=0}^{N-\ell}
	\sum_{r=n+1}^{n+\ell} a_r
	\le
	\ell\sum_{r=1}^{N}a_r
	\qquad
	\text{for every nonnegative sequence } \{a_r\}_{r=1}^N .
	\end{equation}
	Using \eqref{equ:conv uniform bounds} with $p=2$ and the stability estimate \eqref{equ:stoch stability revised main}, we thus obtain
	\[
	\bb E\left[\tau\sum_{n=0}^{N-\ell}
	\norm{\bff u_h^{n+\ell}-\bff u_h^n}{(\bb{V}^m_{\sigma})'}^2 \right]
	\le
	C(\ell\tau+h^2).
	\]

	\emph{Magnetic estimate.}
	Fix again $n\in\{0,\ldots,N-\ell\}$ and let
	$\bff\psi\in\bb H^m(\mathscr D)$ with
	$\norm{\bff\psi}{\bb H^m}\le1$. Set $\bff\psi_h:=\Pi_h^{\bb{RT}}\bff\psi$.
	Since
	$\bff B_h^{n+\ell}-\bff B_h^n\in\bb{RT}_h^0$ and
	$\Pi_h^{\bb{RT}}$ is the $\bb L^2$-orthogonal projection onto
	$\bb{RT}_h^0$, we have
	\[
	\inpro{\bff B_h^{n+\ell}-\bff B_h^n}{\bff\psi}
	=
	\inpro{\bff B_h^{n+\ell}-\bff B_h^n}{\bff\psi_h}.
	\]
	Summing \eqref{equ:stoch fem B} from $r=n+1$ to $r=n+\ell$ gives
	\begin{align}
		\label{equ:conv magnetic increment testing}
		\inpro{\bff B_h^{n+\ell}-\bff B_h^n}{\bff\psi}
		&=
		-\sum_{r=n+1}^{n+\ell}\tau
		\inpro{\curl\bff E_h^r}{\bff\psi_h}
		-\sum_{r=n+1}^{n+\ell}\sum_{i=1}^{\infty}
		\inpro{\curl\bff H_{i,h}^{r-1}}{\bff\psi_h}
		\Delta_r\beta_i^B,
	\end{align}
	where
	$\bff H_{i,h}^{r-1}
	:=
	\Pi_h^{\bb X}
	\bigl(
	\sigma\bff\chi_i
	+
	\eta\bff\chi_i\times\mathcal R_h\bff B_h^{r-1}
	\bigr)$.
	By \eqref{equ:conv curlh projection} and the definition of $\curlh$,
	\[
	\inpro{\curl\bff E_h^r}{\bff\psi_h}
	=
	\inpro{\bff E_h^r}{\curlh\bff\psi_h}
	=
	\inpro{\bff E_h^r}{\Pi_h^{\bb X}\curl\bff\psi}.
	\]
	Testing Ohm's law \eqref{equ:stoch fem Ohm} with
	$\bff\chi_h=\Pi_h^{\bb X}\curl\bff\psi$ gives
	\[
	\inpro{\bff E_h^r}{\Pi_h^{\bb X}\curl\bff\psi}
	=
	\sigma\inpro{\bff J_h^r}{\Pi_h^{\bb X}\curl\bff\psi}
	+
	\eta\inpro{\bff J_h^r\times\bff B_h^{r-1}}
	{\Pi_h^{\bb X}\curl\bff\psi}
	-
	\inpro{\bff u_h^r\times\bff B_h^{r-1}}
	{\Pi_h^{\bb X}\curl\bff\psi}.
	\]
	By \eqref{equ:stab magnetic projection}, \eqref{equ:conv magnetic projection Linfty},
	and the embedding
	$\bb H^m\hookrightarrow \bb W^{1,\infty}$, noting that $\norm{\bff\psi}{\bb H^m}\leq 1$, we have
	\[
	\norm{\Pi_h^{\bb X}\curl\bff\psi}{\bb L^2}
	+
	\norm{\Pi_h^{\bb X}\curl\bff\psi}{\bb L^\infty}
	\le
	C\norm{\bff\psi}{\bb H^m}
	\le
	C.
	\]
	Consequently, we obtain
	\begin{align}
		\label{equ:conv curlE bound}
		\abs{\inpro{\curl\bff E_h^r}{\bff\psi_h}}
		&\le
		C\norm{\bff J_h^r}{\bb L^2}
		+
		C\norm{\bff J_h^r}{\bb L^2}
		\norm{\bff B_h^{r-1}}{\bb L^2}
		+
		C\norm{\bff u_h^r}{\bb L^2}
		\norm{\bff B_h^{r-1}}{\bb L^2}.
	\end{align}
	For the stochastic term, the $\bb L^2$-stability of
	$\Pi_h^{\bb{RT}}$, the conditional It\^o isometry, and
	\eqref{equ:hall current coeff bound final} imply
	\begin{align*}
		&\bb E
		\left[
		\sup_{\substack{\bff\psi\in\bb H^m\\
				\norm{\bff\psi}{\bb H^m}\le1}}
		\abs{
			\sum_{r=n+1}^{n+\ell}\sum_{i=1}^{\infty}
			\inpro{\curl\bff H_{i,h}^{r-1}}{\Pi_h^{\bb{RT}}\bff\psi}
			\Delta_r\beta_i^B
		}^{2}
		\right]
		\\
		&\qquad\le
		C\bb E
		\norm{
			\sum_{r=n+1}^{n+\ell}\sum_{i=1}^{\infty}
			\curl\bff H_{i,h}^{r-1}\Delta_r\beta_i^B
		}{\bb L^2}^{2}
		\\
		&\qquad\le
		C\bb E \left[
		\sum_{r=n+1}^{n+\ell}\tau
		\sum_{i=1}^{\infty}
		\norm{\curl\bff H_{i,h}^{r-1}}{\bb L^2}^{2} \right]
		\\
		&\qquad\le
		C\bb E \left[
		\sum_{r=n+1}^{n+\ell}\tau
		\left(
		1+\norm{\bff B_h^{r-1}}{\bb L^2}^{2}
		\right)\right]
		+
		C\delta_\chi
		\bb E \left[
		\sum_{r=n+1}^{n+\ell}\tau
		\norm{\bff J_h^{r-1}}{\bb L^2}^{2}\right].
	\end{align*}
	
	Taking the supremum over
	$\bff\psi\in\bb H^m$ with $\norm{\bff\psi}{\bb H^m}\le1$ in
	\eqref{equ:conv magnetic increment testing}, using
	\eqref{equ:conv curlE bound}, and applying Cauchy's inequality in the time
	sums, we obtain
	\begin{align}\label{equ:Bn H minus m sq}
		\norm{\bff B_h^{n+\ell}-\bff B_h^n}{\widetilde{\bb H}^{-m}}^2
		&\le
		C\left(
		\sum_{r=n+1}^{n+\ell}\tau
		\norm{\bff J_h^r}{\bb L^2}
		\right)^2
		+
		C\left(
		\sum_{r=n+1}^{n+\ell}\tau
		\norm{\bff J_h^r}{\bb L^2}
		\norm{\bff B_h^{r-1}}{\bb L^2}
		\right)^2
		\nonumber\\
		&\quad
		+
		C\left(
		\sum_{r=n+1}^{n+\ell}\tau
		\norm{\bff u_h^r}{\bb L^2}
		\norm{\bff B_h^{r-1}}{\bb L^2}
		\right)^2
		\nonumber\\
		&\quad
		+
		C
		\sup_{\substack{\bff\psi\in\bb H^m\\
				\norm{\bff\psi}{\bb H^m}\le1}}
		\abs{
			\sum_{r=n+1}^{n+\ell}\sum_{i=1}^{\infty}
			\inpro{\curl\bff H_{i,h}^{r-1}}{\Pi_h^{\bb{RT}}\bff\psi}
			\Delta_r\beta_i^B
		}^{2}.
	\end{align}
	Multiplying by $\tau$, summing over $n=0,\ldots,N-\ell$, using the same
	finite-overlap estimate \eqref{equ:finite overlap} as before, and noting
	\eqref{equ:conv uniform bounds}, we infer that
	\[
	\bb E\left[\tau\sum_{n=0}^{N-\ell}
	\norm{\bff B_h^{n+\ell}-\bff B_h^n}{\widetilde{\bb H}^{-m}}^2\right]
	\le
	C(\ell\tau+h^2).
	\]
	
	It remains to pass from grid translations to arbitrary
	$\kappa\in(0,T)$. Choose $\ell\in\{0,\ldots,N-1\}$ such that $\ell\tau\le\kappa<(\ell+1)\tau$.
	Since $\bff u_{h,\tau}^{+}$ and $\bff B_{h,\tau}^{+}$ are piecewise
	constant in time, for almost every $t\in(0,T-\kappa)$ the time levels
	corresponding to $t$ and $t+\kappa$ differ by either $\ell$ or $\ell+1$.
	Therefore, up to a harmless relabelling of the indices,
	\begin{align*}
		&\int_0^{T-\kappa}
		\norm{\bff u_{h,\tau}^{+}(t+\kappa)-\bff u_{h,\tau}^{+}(t)}
		{(\bb{V}^m_{\sigma})'}^2\dt
		\le
		C\tau\sum_{n=0}^{N-\ell}
		\norm{\bff u_h^{n+\ell}-\bff u_h^n}{(\bb{V}^m_{\sigma})'}^2
		+
		C\tau\sum_{n=0}^{N-\ell-1}
		\norm{\bff u_h^{n+\ell+1}-\bff u_h^n}{(\bb{V}^m_{\sigma})'}^2,
	\end{align*}
	with the obvious convention when $\ell=0$. Applying the grid-translation
	estimate with $\ell$ and $\ell+1$, and noting that
	$(\ell+1)\tau\le\kappa+\tau$,
	we deduce \eqref{equ:conv time translation u}. The same argument, using the
	magnetic grid-translation estimate, gives \eqref{equ:conv time translation B}.
	The proof is now complete.
\end{proof}

We have the following lemma on joint tightness of laws.

\begin{lemma}[Joint tightness]
	\label{lem:conv joint tightness}
	Let $m\ge3$. For every sequence $(h_k,\tau_k)\to(0,0)$, the laws of
	\[
	\mathcal Z_{h_k,\tau_k}
	:=
	\bigl(
	\bff u_{h_k,\tau_k},
	\bff B_{h_k,\tau_k},
	\bff u_{h_k,\tau_k}^{+},
	\bff u_{h_k,\tau_k}^{-},
	\bff B_{h_k,\tau_k}^{+},
	\bff B_{h_k,\tau_k}^{-},
	\bff J_{h_k,\tau_k}^{+},
	\bff J_{h_k,\tau_k}^{-}
	\bigr)
	\]
	are tight on the product space
	\[
	\mathcal X
	:=
	\mathcal X_u
	\times
	\mathcal X_B
	\times
	\mathcal X_{u}^{+}
	\times
	\mathcal X_u^{-}
	\times
	\mathcal X_B^{+}
	\times
	\mathcal X_B^{-}
	\times
	\mathcal X_J^{+}
	\times
	\mathcal X_J^{-},
	\]
	where
	\begin{subequations}
		\begin{alignat*}{2}
	&\mathcal X_u
	:=
	L^2_T(\bb L^2)
	\cap
	C([0,T];(\bb V^m_\sigma)'),
	\qquad
	&&\mathcal X_B
	:=
	L^2_T(\bb L^2)
	\cap
	C([0,T];\widetilde{\bb H}^{-m}),
	\\
	&\mathcal X_u^{+}
	:=
	L^2_T(\bb L^2)
	\cap
	L^2_T(\bb H_0^1)_{\mathrm w},
	\qquad
	&&\mathcal X_B^{+}
	:=
	L^2_T(\bb L^2)
	\cap
	L^\infty_T(\bb L^2)_{\mathrm w^*},
	\\
	&\mathcal X_u^{-}= \mathcal X_B^{-}:= L^2_T(\bb L^2),
	\qquad
	&&\mathcal X_J^{+}= \mathcal X_J^{-}:= L^2_T(\bb L^2)_{\mathrm w}.
		\end{alignat*}
	\end{subequations}
	All intersection spaces are endowed with the initial topology induced by their
	natural embeddings into the two factors.
\end{lemma}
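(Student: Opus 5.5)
Tightness of a product measure reduces to tightness of each marginal, since a product of compact sets is compact in the product topology. So the plan is to exhibit, for every $\varepsilon>0$ and each of the eight components, a compact set $K_\varepsilon$ in the corresponding factor of $\mathcal X$ whose complement has probability at most $\varepsilon/8$, uniformly in $k$. In every case the probability bound comes from Markov's inequality applied to the uniform moment bounds \eqref{equ:conv uniform bounds}--\eqref{equ:conv shifted uniform bounds} (with $p=1$ suffices) and to the time-translate estimates of Lemma~\ref{lem:conv time translation}.

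\textbf{Weak-topology factors.} For $\mathcal X_J^{\pm}$, closed balls of $L^2_T(\bb L^2)$ are weakly compact by reflexivity and metrisable on bounded sets. Markov's inequality with \eqref{equ:conv uniform bounds} gives $\bb P(\norm{\bff J^{\pm}_{h,\tau}}{L^2_T(\bb L^2)}>R)\le C/R^2$. The same argument handles the weak factor of $\mathcal X_u^+$, using a ball of $L^2_T(\bb H^1_0)$, and the weak-$*$ factor of $\mathcal X_B^+$, using a ball of $L^\infty_T(\bb L^2)$ and Banach--Alaoglu.

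\textbf{Strong $L^2_T(\bb L^2)$ factors.} This is the main obstacle, since the spatial bounds are asymmetric: $\bff u^+$ is bounded in $L^2_T(\bb H^1_0)$, but $\bff B$ only has $\bff J^+=\curlh$-type control, not an $\bb H^1$ bound. For the velocity, I would use the Aubin--Lions--Simon criterion with the triple $\bb H^1_0\Subset\bb L^2\hookrightarrow(\bb V^m_\sigma)'$. The latter embedding is not injective on all of $\bb L^2$, but it is on the discretely divergence-free part up to an $O(h)$ defect via the Stokes projection. I would handle this as in \eqref{equ:uhnl phi}, i.e.\ the Simon-type set is defined with the $h^2$-term allowed to vanish along the sequence.

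Concretely, the candidate compact set is
\[
K=\Bigl\{\bff v:\norm{\bff v}{L^2_T(\bb H^1_0)}\le R,\ \int_0^{T-\kappa}\norm{\bff v(t+\kappa)-\bff v(t)}{(\bb V^m_\sigma)'}^2\dt\le \omega(\kappa)\ \forall\kappa\Bigr\},
\]
with $\omega(\kappa)\to0$. The probability estimate uses Markov's inequality applied to $\sum_j j^{-1/2}\sup_{\kappa\le 2^{-j}}\cdots$ along a dyadic sequence $\kappa_j$, together with \eqref{equ:conv time translation u}. Only finitely many $k$ have $\tau_k+h_k^2>\kappa_j$; these individual laws are tight on their own and are added separately.

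For the magnetic field, I would replace the $\bb H^1$ bound by the discrete Maxwell compactness property together with $\divg\bff B^+=0$ from \eqref{equ:conv discrete divB}. The key step is a version of Aubin--Lions in which compactness in space comes from the family $\{\bff C_h\in\bb{RT}^0_h:\norm{\bff C_h}{\bb L^2}+\norm{\curlh\bff C_h}{\bb L^2}\le R\}$, used for a.e.\ $t$. Since $\curlh\bff B_h^n=\Pi_h^{\bb X}\bff J_h^n$ has norm at most $\norm{\bff J_h^n}{\bb L^2}$, bounds on $\bff J^+$ transfer to $\curlh\bff B^+$. An alternative is the Maxwell reconstruction $\mathcal R_h\bff B^+$, which is bounded in $L^2_T(\bb H^1)$ by \eqref{equ:Maxwell reconstruction stability}. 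I would first try to show $\bff B^+-\mathcal R_h\bff B^+\to0$ in $L^2_T(\bb L^2)$ in probability: via a contradiction argument on each time slice using Lemma~\ref{lem:conv reconstruction consistency}, or via $\norm{\bff C_h-\mathcal R_h\bff C_h}{\bb L^2}\le \epsilon(h)$ uniformly on that bounded family, which follows from the same compactness by contradiction. Then Aubin--Lions for $\mathcal R_h\bff B^+$ in $\bb H^1\Subset\bb L^2\hookrightarrow\widetilde{\bb H}^{-m}$ applies, with time translates controlled by \eqref{equ:conv time translation B} and the $\mathcal R_h$-defect. Tightness of $\bff B^+$ follows because tightness is preserved under perturbations that converge to zero in probability.

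\textbf{Remaining components.} The other strong components follow from these. \eqref{equ:conv left right vanish} and \eqref{equ:uht plus minus} show $\bff u^-,\bff u_{h,\tau}$ (resp.\ the $\bff B$ analogues) differ from $\bff u^+$ (resp.\ $\bff B^+$) by quantities tending to zero in probability in $L^2_T(\bb L^2)$. For the $C([0,T];(\bb V^m_\sigma)')$ and $C([0,T];\widetilde{\bb H}^{-m})$ factors of the affine interpolants, I would use Arzel\`a--Ascoli. Equicontinuity comes from a H\"older-in-time moment bound: the increments from the proof of Lemma~\ref{lem:conv time translation}, taken with $p=2$ moments and BDG on the stochastic sums, give $\bb E\norm{\bff u_{h,\tau}(t)-\bff u_{h,\tau}(s)}{(\bb V^m_\sigma)'}^4\le C(|t-s|+\tau+h^2)^{2}$ for grid points, followed by Kolmogorov/Garsia-type reasoning. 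Pointwise relative compactness follows from the $L^\infty_T(\bb L^2)$ bound and the compact embeddings $\bb L^2\Subset(\bb V^m_\sigma)'$ and $\bb L^2\Subset\widetilde{\bb H}^{-m}$.
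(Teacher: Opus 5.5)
Most of your plan matches the paper: the weak and weak-$*$ factors via bounded balls and Markov's inequality, $\bff B^{+}$ in $L^2_T(\bb L^2)$ via Simon's criterion for $\mathcal R_h\bff B^{+}$ plus the uniform $\bb L^2$-consistency of $\mathcal R_h$ (Lemma~\ref{lem:Maxwell reconstruction L2 consistency}), and the transfer to $\bff u^{-},\bff B^{-}$ and the affine interpolants through \eqref{equ:conv left right vanish}--\eqref{equ:uht plus minus}.

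\textbf{Main gap: the factor $C([0,T];\widetilde{\bb H}^{-m})$ for $\bff B_{h,\tau}$.} The fourth-moment H\"older bound you invoke does not follow from the increments in the proof of Lemma~\ref{lem:conv time translation}. There the stochastic increment is controlled through $\sum_i\norm{\curl\bff H_{i,h}^{r-1}}{\bb L^2}^2$. By \eqref{equ:hall current coeff bound final} this contains $\delta_\chi\norm{\bff J_h^{r-1}}{\bb L^2}^2$. BDG over a window $(s,t]$ therefore produces $\delta_\chi^2\,\bb E\bigl[\bigl(\sum_{s<t_r\le t}\tau\norm{\bff J_h^{r-1}}{\bb L^2}^2\bigr)^2\bigr]$. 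This is bounded but carries no factor of $|t-s|$, because $\bff J$ is controlled only in $L^2_T(\bb L^2)$. In Lemma~\ref{lem:conv time translation} this was harmless only because of the time integration and the finite-overlap bound \eqref{equ:finite overlap}, neither of which is available for a uniform modulus of continuity.

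The missing idea is Lemma~\ref{lem:magnetic noise negative coefficient}. Measure the noise coefficient directly in $\widetilde{\bb H}^{-m}$ and integrate by parts using $\Pi_h^{\bb X}\bff Z\in\hzerocurl$, so only $\norm{\bff Z}{\bb L^2}$ enters. Then split $\mathcal R_h\bff B_h^{n-1}=\bff B_h^{n-1}+(\mathcal R_h\bff B_h^{n-1}-\bff B_h^{n-1})$, so that $\bff J$ appears only with the vanishing factor $\varepsilon_{\mathcal R}(h)$. This gives the block bound $C(((b-a)\tau)^2+\vartheta_h^4)$ of Lemma~\ref{lem:conv maximal block increments}.

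Because of the additive $\vartheta_h^4$ (and the additive $h^4$ for the velocity, coming from \eqref{equ:uhnl phi}), Kolmogorov's criterion does not apply as stated. Instead, cover $[0,T]$ by $O(T/\delta)$ windows, apply Chebyshev and a union bound, and take $\limsup_{h,\tau\to0}$ before $\delta\downarrow0$, as in Lemma~\ref{lem:stochastic equicontinuity negative}.

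\textbf{Second problem: your set $K$ for the velocity is not relatively compact in $L^2_T(\bb L^2)$.} Take $\zeta\in C_c^\infty(\mathscr D)$ and $a_k(t)=\sin(kt)$. The fields $a_k(t)\nabla\zeta$ are bounded in $L^2_T(\bb H_0^1)$ and have identically vanishing translates in $(\bb V^m_\sigma)'$, yet they have no strongly convergent subsequence. You correctly noticed that $\bb L^2\to(\bb V^m_\sigma)'$ is not injective. However, letting the $h^2$-term vanish does not cure this, since that term comes from $\bff\phi-\mathcal S_h\bff\phi$, not from the gradient component of $\bff u_h^n$. The paper's proof applies Simon's criterion to this triple directly and glosses over the same point.

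A working repair uses the Leray projection $\Pi_\sigma$ onto $\bb L^2_\sigma$. For $\bff v_h\in\bb V_{h,\sigma}$, write $\bff v_h=\Pi_\sigma\bff v_h+\nabla q$ with $q\in H^1\cap L^2_0$. Then $\norm{\nabla q}{\bb L^2}^2=-\inpro{\divg\bff v_h}{q-q_h}$ for every $q_h\in Q_h^0$, hence $\norm{\nabla q}{\bb L^2}\le Ch\norm{\nabla\bff v_h}{\bb L^2}$. Consequently $(I-\Pi_\sigma)\bff u_{h,\tau}^{+}\to0$ in $L^2_T(\bb L^2)$ in probability. The solenoidal part $\Pi_\sigma\bff u_{h,\tau}^{+}$ has three properties:
\begin{enumerate}
\item it is bounded in $L^2_T(\bb H^1)$ by the Gaffney estimate, since its curl equals $\curl\bff u_{h,\tau}^{+}$;
\item it has the same $(\bb V^m_\sigma)'$-translates as $\bff u_{h,\tau}^{+}$;
\item it takes values in $\bb L^2_\sigma$, and $\bb L^2_\sigma\hookrightarrow(\bb V^m_\sigma)'$ is injective.
\end{enumerate}
So Simon's criterion genuinely applies to $\Pi_\sigma\bff u_{h,\tau}^{+}$, and tightness of $\bff u_{h,\tau}^{+}$ follows.
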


\begin{proof}
	Fix a sequence $(h_k,\tau_k)\to(0,0)$. By
	Lemma~\ref{lem:tightness negative path spaces}, the laws of
	$\{\bff u_{h_k,\tau_k}\}_{k\ge1}$ are tight in
	$C([0,T];(\bb V^m_\sigma)')$, and the laws of
	$\{\bff B_{h_k,\tau_k}\}_{k\ge1}$ are tight in
	$C([0,T];\widetilde{\bb H}^{-m})$.
	
	We next justify the compactness in $L^2_T(\bb L^2)$. For the velocity, set
	\[
	X_0=\bb H_0^1,\qquad
	X=\bb L^2,\qquad
	X_1=(\bb V^m_\sigma)'.
	\]
	Then $X_0\Subset X\hookrightarrow X_1$.
	Simon's compactness criterion~\cite{Sim87} states that a deterministic set
	bounded in $L^2(0,T;X_0)$ and uniformly equicontinuous with respect to time
	translations in $L^2(0,T;X_1)$ is relatively compact in $L^2(0,T;X)$.
	In the stochastic setting, this is applied as follows. The stability
	estimate \eqref{equ:stoch stability revised main} gives
	\[
	\sup_k
	\bb E \left[\norm{\bff u_{h_k,\tau_k}^{+}}{L^2_T(\bb H_0^1)}^2 \right]
	\le C,
	\]
	while \eqref{equ:conv time translation u} gives, for every $\kappa>0$,
	\[
	\limsup_{k\to\infty}
	\bb E \left[
	\int_0^{T-\kappa}
	\norm{
		\bff u_{h_k,\tau_k}^{+}(t+\kappa)
		-
		\bff u_{h_k,\tau_k}^{+}(t)}
	{(\bb V^m_\sigma)'}^2\dt \right]
	\le C\kappa.
	\]
	Hence, by the Markov inequality, the deterministic compact sets supplied by
	Simon's criterion carry probability arbitrarily close to one. Therefore the
	laws of $\{\bff u_{h_k,\tau_k}^{+}\}_{k\ge1}$ are tight in
	$L^2_T(\bb L^2)$.
	
	For the magnetic field, we apply the same argument to the reconstructed
	fields $\mathcal R_{h_k}\bff B_{h_k,\tau_k}^{+}$. Namely, by the stability of
	the reconstruction \eqref{equ:Maxwell reconstruction stability} and \eqref{equ:stoch stability revised main},
	\[
	\sup_k
	\bb E \left[ \norm{\mathcal R_{h_k}\bff B_{h_k,\tau_k}^{+}}{L^2_T(\bb H^1)}^2 \right]
	\le C.
	\]
	Moreover, using \eqref{equ:conv time translation B} and the consistency of
	$\mathcal R_h$ in \eqref{equ:Rh L2 consistency asymptotic}, one obtains
	\[
	\limsup_{k\to\infty}
	\bb E \left[
	\int_0^{T-\kappa}
	\norm{
		\mathcal R_{h_k}\bff B_{h_k,\tau_k}^{+}(t+\kappa)
		-
		\mathcal R_{h_k}\bff B_{h_k,\tau_k}^{+}(t)}
	{\widetilde{\bb H}^{-m}}^2\dt \right]
	\le C\kappa .
	\]
	Thus, as before, Simon's criterion implies tightness
	of the laws of
	$\{\mathcal R_{h_k}\bff B_{h_k,\tau_k}^{+}\}_{k\ge1}$ in
	$L^2_T(\bb L^2)$. Since
	\[
	\norm{
		\mathcal R_{h_k}\bff B_{h_k,\tau_k}^{+}
		-
		\bff B_{h_k,\tau_k}^{+}}
	{L^2_T(\bb L^2)}
	\longrightarrow 0 \;\text{ in probability},
	\]
	the laws of $\{\bff B_{h_k,\tau_k}^{+}\}_{k\ge1}$ are also
	tight in $L^2_T(\bb L^2)$.
	Finally, the affine interpolants have the same $L^2_T(\bb L^2)$ tightness by \eqref{equ:uht plus minus}, since tightness is preserved under convergence in probability to a tight family. We have thus shown that the laws of $\bff u_{h_k,\tau_k},
	\bff u_{h_k,\tau_k}^{+},
	\bff u_{h_k,\tau_k}^{-}$
	and $\bff B_{h_k,\tau_k},
	\bff B_{h_k,\tau_k}^{+},
	\bff B_{h_k,\tau_k}^{-}$
	are tight in $L^2_T(\bb L^2)$.
	
	The additional weak and weak-star tightness follows directly from the
	stability estimates \eqref{equ:stoch stability revised main}.
	Therefore the laws of $\bff u_{h_k,\tau_k}^{+}$ are tight in
	$L^2_T(\bb H_0^1)_{\mathrm w}$, and the laws of
	$\bff B_{h_k,\tau_k}^{+}$ are tight in
	$L^\infty_T(\bb L^2)_{\mathrm w^*}$. Combining these weak compactness
	properties with the strong $L^2_T(\bb L^2)$ tightness gives tightness of
	$\bff u_{h_k,\tau_k}^{+}$ on $\mathcal X_u^{+}$ and of
	$\bff B_{h_k,\tau_k}^{+}$ on $\mathcal X_B^{+}$. Similarly, the
	laws of $\bff J_{h_k,\tau_k}^{+}$ and $\bff J_{h_k,\tau_k}^{-}$ are tight in
	$\mathcal X_J^{+}$ and $\mathcal X_J^{-}$, respectively.
	This proves the claim.
\end{proof}

\subsection{Convergence to a martingale solution}

Having established the uniform stability estimates and tightness of the discrete laws, we now pass to the limit as $(h,\tau)\to(0,0)$. We first apply the Jakubowski--Skorokhod representation theorem to realise a subsequence of the discrete solutions on a new probability space. We then identify the limits, and finally recover the limiting martingale structure and its quadratic covariations. These steps yield a weak martingale solution of \eqref{equ:stoch mhd}.

\begin{lemma}[Skorokhod representation]
	\label{lem:conv sk representation}
	There exist a subsequence of $(h_k,\tau_k)\to(0,0)$, not
	relabelled, a probability space $(\widetilde\Omega,\widetilde{\mathscr F},\widetilde{\bb P})$, random variables
	\[
	(\widetilde{\bff u}_{h_k,\tau_k},
	\widetilde{\bff B}_{h_k,\tau_k},
	\widetilde{\bff u}_{h_k,\tau_k}^{+},
	\widetilde{\bff u}_{h_k,\tau_k}^{-},
	\widetilde{\bff B}_{h_k,\tau_k}^{+},
	\widetilde{\bff B}_{h_k,\tau_k}^{-},
	\widetilde{\bff J}_{h_k,\tau_k}^{+},
	\widetilde{\bff J}_{h_k,\tau_k}^{-})
	\]
	with the same laws as the original variables, and random variables
	$(\bff u,\bff B,\bff J)$ such that, $\widetilde{\bb P}$-a.s.,
	\begin{subequations}
	\begin{alignat}{2}
		\label{equ:conv sk u strong}
		\widetilde{\bff u}_{h_k,\tau_k},
		\widetilde{\bff u}_{h_k,\tau_k}^{+},
		\widetilde{\bff u}_{h_k,\tau_k}^{-}
		&\to\bff u
		\quad&&\text{strongly in }L^2_T(\bb L^2),
		\\
		\label{equ:conv sk B strong}
		\widetilde{\bff B}_{h_k,\tau_k},
		\widetilde{\bff B}_{h_k,\tau_k}^{+},
		\widetilde{\bff B}_{h_k,\tau_k}^{-}
		&\to\bff B
		\quad&&\text{strongly in }L^2_T(\bb L^2),
	\end{alignat}
	\end{subequations}
	and
	\begin{subequations}
		\begin{alignat}{2}
		\label{equ:conv sk path}
		\widetilde{\bff u}_{h_k,\tau_k}
		&\to\bff u
		\quad&&\text{in }C([0,T];(\bb V^m_\sigma)'),
		\\
		\label{equ:conv Bk path}
		\widetilde{\bff B}_{h_k,\tau_k}
		&\to\bff B
		\quad&&\text{in }C([0,T];\widetilde{\bb H}^{-m}).
	\end{alignat}
	\end{subequations}
	Moreover,
	\begin{subequations}
		\begin{alignat}{2}
		\label{equ:conv sk weak}
		\widetilde{\bff u}_{h_k,\tau_k}^{+}
		&\rightharpoonup\bff u
		\quad\text{weakly in }L^2_T(\bb H_0^1),
		\\
		\label{equ:conv Bk weak}
		\widetilde{\bff B}_{h_k,\tau_k}^{+}
		&\overset{*}{\rightharpoonup}\bff B
		\quad\text{weakly-star in }L^\infty_T(\bb L^2),
		\\
		\label{equ:conv Jk weak}
		\widetilde{\bff J}_{h_k,\tau_k}^{+},
		\widetilde{\bff J}_{h_k,\tau_k}^{-}
		&\rightharpoonup\bff J
		\quad\text{weakly in }L^2_T(\bb L^2).
	\end{alignat}
	\end{subequations}
\end{lemma}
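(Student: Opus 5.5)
The plan is to apply the Jakubowski--Skorokhod representation theorem on the product space $\mathcal X$ from Lemma~\ref{lem:conv joint tightness}, and then to identify all the limits with a single triple $(\bff u,\bff B,\bff J)$.

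\emph{Step 1: Applicability of the theorem.} The space $\mathcal X$ is not metrisable, because of the weak and weak-star factors. Jakubowski's version of the theorem only needs a countable family of continuous real-valued functions on $\mathcal X$ that separates points. For the Polish factors $L^2_T(\bb L^2)$, $C([0,T];(\bb V^m_\sigma)')$ and $C([0,T];\widetilde{\bb H}^{-m})$ this is standard. For $L^2_T(\bb H_0^1)_{\mathrm w}$, $L^2_T(\bb L^2)_{\mathrm w}$ and $L^\infty_T(\bb L^2)_{\mathrm w^*}$, we pair with a countable dense subset of the separable predual, respectively dual. These spaces are separable: $L^2_T(\bb H^{-1})$, $L^2_T(\bb L^2)$ and $L^1_T(\bb L^2)$. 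Products and intersections keep this property. Tightness comes from Lemma~\ref{lem:conv joint tightness}, and we also append the Brownian motions $\{\beta^u_i\},\{\beta^B_i\}$ as a tight component in $C([0,T];\bb R^\infty)$. The theorem then gives a subsequence, a new probability space, random variables $\widetilde{\mathcal Z}_k$ with the same laws, and $\widetilde{\bb P}$-a.s. convergence in $\mathcal X$ to some limit
\[
(\bff u^{a},\bff B^{a},\bff u^{+},\bff u^{-},\bff B^{+},\bff B^{-},\bff J^{+},\bff J^{-}).
\]

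\emph{Step 2: Identifying the velocity and magnetic limits.} Equality of laws transfers deterministic relations to the new space, because these relations define Borel sets of $\mathcal X$. The transferred relations are:
\begin{itemize}
\item the affine interpolant is built from the $\pm$ interpolants;
\item \eqref{equ:conv left right vanish} and \eqref{equ:uht plus minus} hold in expectation.
\end{itemize}
So on the new space, $\widetilde{\bb E}\norm{\widetilde{\bff u}^+_k-\widetilde{\bff u}^-_k}{L^2_T(\bb L^2)}^2\le C\tau_k\to 0$, and the same holds for the other differences. A subsequence then converges a.s., so $\bff u^a=\bff u^+=\bff u^-=:\bff u$ and $\bff B^a=\bff B^\pm=:\bff B$. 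The strong $L^2_T(\bb L^2)$ limit coincides with the $C([0,T];(\bb V^m_\sigma)')$ limit and with the weak limits in $L^2_T(\bb H_0^1)$ and $L^\infty_T(\bb L^2)$, since all these topologies embed continuously into distributions on $(0,T)\times\mathscr D$. This gives \eqref{equ:conv sk u strong}--\eqref{equ:conv Bk weak}.

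\emph{Step 3: The current limits.} The main obstacle is showing $\bff J^+=\bff J^-$. This is not a consequence of a strong difference estimate, since $\bff J^\pm$ converge only weakly. I would test against $\bff\omega\in L^2_T(\bb L^2)$ smooth and use that $\bff J^-_k$ is the shift of $\bff J^+_k$ by $\tau_k$. First,
\[
\int_0^T\inpro{\bff J^-_k}{\bff\omega}\dt=\int_0^{T}\inpro{\bff J^+_k}{\bff\omega(\cdot+\tau_k)}\dt+O(\tau_k\norm{\bff J^0_h}{\bb L^2}),
\]
where $\bff\omega$ is extended by zero outside $[0,T]$. The shifted test function converges strongly in $L^2_T(\bb L^2)$, so both weak limits agree. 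Second, the initial contribution $\tau_k\norm{\bff J_h^0}{\bb L^2}^2$ vanishes because $\sup_h\norm{\bff J^0_h}{\bb L^2}<\infty$. Since a countable dense family of test functions suffices, this gives $\bff J^+=\bff J^-=:\bff J$ a.s., and hence \eqref{equ:conv Jk weak}.
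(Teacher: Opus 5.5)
Your proposal is correct and follows the same route as the paper: you apply Jakubowski--Skorokhod on the product space $\mathcal X$ using the tightness from Lemma~\ref{lem:conv joint tightness}, and then identify the $\pm$ and affine limits via \eqref{equ:conv left right vanish}, \eqref{equ:uht plus minus} and equality of laws. Your time-shift argument for $\bff J^+=\bff J^-$ supplies a step the paper's proof leaves implicit, since its cited difference estimate only covers $\bff u$ and $\bff B$; appending the Brownian motions is harmless but not needed here, because the paper recovers the noise later through a martingale representation theorem.
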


\begin{proof}
	By Lemma~\ref{lem:conv joint tightness}, the joint laws are tight on
	$\mathcal X$. Since the weak and weak-star factors of $\mathcal X$ are
	Jakubowski spaces, the Jakubowski--Skorokhod theorem gives a new probability
	space and copies of the numerical variables converging almost surely in the
	product topology of $\mathcal X$. The strong identifications of the left and
	right interpolants with the same limits follow from
	\eqref{equ:conv left right vanish} and equality of laws. This proves the required results.
\end{proof}

\begin{lemma}[Identification of constraints and current]
	\label{lem:conv constraints current}
	The limit $(\bff u,\bff B,\bff J)$ obtained in
	Lemma~\ref{lem:conv sk representation} satisfies, $\widetilde{\bb P}$-a.s.,
	\[
	\divg\bff u=0
	\quad\text{in }\mathcal D'((0,T)\times\mathscr D),
	\]
	and
	\[
	\divg\bff B=0
	\quad\text{in }\mathcal D'((0,T)\times\mathscr D),
	\qquad
	\bff B\cdot\bff n=0
	\quad\text{in the weak normal-trace sense}.
	\]
	Moreover,
	\begin{align}
		\label{equ:conv current identified}
		\inpro{\bff J(t)}{\bff\omega}
		-
		\inpro{\bff B(t)}{\curl\bff\omega}
		=0,
		\qquad
		\forall\bff\omega\in\hzerocurl,
	\end{align}
	for a.e. $t\in(0,T)$, $\widetilde{\bb P}$-a.s. Consequently, $\bff B\in
	L^2\bigl(\widetilde\Omega;L^2_T(\bb H^1)\bigr)$.
\end{lemma}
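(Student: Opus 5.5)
The plan is to pass to the limit in the discrete constraint relations. All identities hold pathwise on the new probability space, since the Skorokhod copies have the same laws as the discrete solutions and the relevant relations are measurable, closed properties.

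\emph{Velocity divergence.} For $q\in C_c^\infty((0,T)\times\mathscr D)$, I would write $q=(q-\bar q)+\bar q$, where $\bar q$ is the spatial mean; the constant part gives zero by the boundary condition. I then approximate $q-\bar q$ by $q_h:=\mathcal I_h^{Q}(q-\bar q)$, corrected to have zero mean, so that $q_h\in Q_h^0$ and $q_h\to q-\bar q$ strongly in $L^2_T(L^2)$. The scheme \eqref{equ:stoch fem div u} gives
\[
\int_0^T\inpro{\divg\widetilde{\bff u}_{h,\tau}^{+}}{q_h}\dt=0 .
\]
Combining the weak convergence \eqref{equ:conv sk weak} of $\nabla\widetilde{\bff u}_{h,\tau}^{+}$ in $L^2_T(\bb L^2)$ with the strong convergence of $q_h$, I conclude $\int_0^T\inpro{\divg\bff u}{q}\dt=0$.

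\emph{Magnetic divergence and normal trace.} By \eqref{equ:conv discrete divB}, $\widetilde{\bff B}_{h,\tau}^{+}$ lies in $\hzerodiv$ with zero divergence. Hence
\[
\inpro{\widetilde{\bff B}_{h,\tau}^{+}(t)}{\nabla q}=0\qquad\text{for all } q\in H^1 .
\]
Integrating against $\theta\in C_c^\infty(0,T)$ and using the strong convergence \eqref{equ:conv sk B strong}, I get $\int_0^T\theta\inpro{\bff B}{\nabla q}\dt=0$. A countable dense family of $q$ and $\theta$ then gives $\inpro{\bff B(t)}{\nabla q}=0$ for all $q\in H^1$ and a.e.\ $t$. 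This single identity encodes both $\divg\bff B=0$ in distributions and $\bff B\cdot\bff n=0$ in the weak normal-trace sense.

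\emph{Current identification.} Fix $\bff\omega\in\hzerocurl$ and let $\bff\omega_h:=\Pi_h^{\bb X}\bff\omega$. To get $\curl\bff\omega_h\to\curl\bff\omega$ in $\bb L^2$, I would argue by density: for smooth $\bff\omega\in C_c^\infty$, use \eqref{equ:conv magnetic projection estimates} together with \eqref{equ:PiX curl H1 stability} applied to $\bff\omega-\mathcal J\bff\omega$, or more simply the commuting interpolant $\mathcal I_h^{\bb X}$ from \eqref{equ:comm}, which converges in $\hcurl$ for smooth fields. Any $\mathcal I_h^{\bb X}$-type approximation $\bff\omega_h\in\bb X_h^0$ with $\bff\omega_h\to\bff\omega$ in $\hcurl$ suffices. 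Testing \eqref{equ:stoch fem J} with $\bff\omega_h$, multiplying by $\theta(t)$ and integrating in time gives
\[
\int_0^T\theta\bigl(\inpro{\widetilde{\bff J}^{+}_{h,\tau}}{\bff\omega_h}-\inpro{\widetilde{\bff B}^{+}_{h,\tau}}{\curl\bff\omega_h}\bigr)\dt=0 .
\]
I pass to the limit using weak convergence of $\widetilde{\bff J}^{+}$ against the strongly convergent $\bff\omega_h$, and strong convergence of $\widetilde{\bff B}^{+}$. Density of $C_c^\infty$ in $\hzerocurl$ and a countable separability argument then yield \eqref{equ:conv current identified} for all $\bff\omega$ simultaneously for a.e.\ $t$.

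\emph{Regularity of $\bff B$.} From \eqref{equ:conv current identified}, $\curl\bff B=\bff J\in\bb L^2$ in distributions, with $\divg\bff B=0$ and $\bff B\cdot\bff n=0$. The Maxwell regularity estimate on the convex domain, already used in Lemma~\ref{lem:conv reconstruction consistency}, gives
\[
\norm{\bff B}{\bb H^1}\le C\bigl(\norm{\bff B}{\bb L^2}+\norm{\bff J}{\bb L^2}\bigr).
\]
The moment bound $\widetilde{\bb E}\norm{\bff J}{L^2_T(\bb L^2)}^2<\infty$ follows from weak lower semicontinuity, Fatou's lemma and \eqref{equ:conv uniform bounds}, and the bound on $\bff B$ in $L^\infty_T(\bb L^2)$ follows similarly.

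The main obstacle is the construction of $\bff\omega_h\in\bb X_h^0$ with $\curl\bff\omega_h\to\curl\bff\omega$ strongly. For smooth compactly supported $\bff\omega$ this holds with the canonical Nédélec interpolant, and the density step requires the a.e.-$t$ statement to hold for a countable dense set of test fields.
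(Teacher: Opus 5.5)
Your proposal is correct and follows essentially the same route as the paper: you pass to the limit in the discrete divergence relations and in \eqref{equ:stoch fem J} tested with the canonical N\'ed\'elec interpolant $\mathcal I_h^{\bb X}\bff\omega$ of a smooth field, conclude by density in $\hzerocurl$, and finish with the Gaffney (Maxwell regularity) estimate. Several of your steps fill in details the paper leaves implicit: the splitting $q=(q-\bar q)+\bar q$ for the velocity constraint, the countable-dense-set argument that gives the current identity for all $\bff\omega$ simultaneously for a.e.\ $t$, and the Fatou argument for the moment bounds. Your aside about $\Pi_h^{\bb X}\bff\omega$ is unnecessary, and with lowest-order elements it only bounds $\curl\Pi_h^{\bb X}\bff\omega$ rather than giving strong convergence; weak convergence would in fact suffice against the strongly convergent $\widetilde{\bff B}^{+}_{h,\tau}$.
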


\begin{proof}
	Since $\widetilde{\bff B}_{h,\tau}^{+}$ is divergence-free in
	$\bb{RT}_h^0$ and has zero normal trace, for every $\rho\in C_c^\infty(0,T)$ and $\zeta\in H^1(\mathscr D)$,
	\[
	\int_0^T \rho(t)
	\inpro{\widetilde{\bff B}_{h,\tau}^{+}}{\nabla\zeta}\dt=0.
	\]
	Passing to the limit by \eqref{equ:conv sk B strong} gives
	\[
	\int_0^T \rho(t)
	\inpro{\bff B}{\nabla\zeta}\dt=0,
	\]
	which gives $\divg\bff B=0$ in the distributional sense and $\bff B\cdot\bff n=0$ in the weak
	normal-trace sense.
	
	The velocity constraint follows from \eqref{equ:stoch fem div u}. Let
	$\lambda\in C^\infty(\overline{\mathscr D})\cap L^2_0$ and $\rho\in C_c^\infty(0,T)$.
	Choose a deterministic $\lambda_h\in Q_h^0$ such that
	$\lambda_h\to\lambda$ strongly in $L^2(\mathscr D)$. Since the tilded
	variables have the same laws as the original variables, \eqref{equ:stoch fem div u}
	gives, $\widetilde{\bb P}$-a.s.,
	\[
	\int_0^T \rho(t)
	\inpro{\divg\widetilde{\bff u}_{h,\tau}^{+}}{\lambda_h}\dt
	=0 .
	\]
	Passing to the limit using
	$\widetilde{\bff u}_{h,\tau}^{+}\rightharpoonup\bff u$ in
	$L^2_T(\bb H_0^1)$ and $\lambda_h\to\lambda$ in $L^2(\mathscr D)$, we obtain
	\[
	\int_0^T \rho(t)
	\inpro{\divg\bff u}{\lambda}\dt
	=0 .
	\]
	By density, $\divg\bff u=0$ in $\mathcal D'((0,T)\times\mathscr D)$,
	$\widetilde{\bb P}$-a.s.
	
	Next, we identify the current. First take
	$\bff\omega\in C^\infty(\overline{\mathscr D})\cap\hzerocurl$ and set
	$\bff\omega_h:=\mathcal I_h^{\bb X}\bff\omega\in\bb X_h^0$,
	where $\mathcal I_h^{\bb X}$ is a Nedelec interpolant satisfying
	\[
	\bff\omega_h\to\bff\omega
	\quad\text{in }\bb L^2,
	\qquad
	\curl\bff\omega_h\to\curl\bff\omega
	\quad\text{in }\bb L^2.
	\]
	Using \eqref{equ:stoch fem J} and equality of laws, For any $\rho\in C_c^\infty(0,T)$,
	\[
	\int_0^T \rho(t)
	\inpro{\widetilde{\bff J}_{h,\tau}^{+}}{\bff\omega_h}\dt
	=
	\int_0^T \rho(t)
	\inpro{\widetilde{\bff B}_{h,\tau}^{+}}{\curl\bff\omega_h}\dt,
	\qquad \widetilde{\bb P}\text{-a.s.}
	\]
	Passing to the limit using \eqref{equ:conv sk B strong} and
	\eqref{equ:conv sk weak}, we obtain
	\[
	\int_0^T \rho(t)
	\inpro{\bff J}{\bff\omega}\dt
	=
	\int_0^T \rho(t)
	\inpro{\bff B}{\curl\bff\omega}\dt.
	\]
	By density of
	$C^\infty(\overline{\mathscr D})\cap\hzerocurl$ in $\hzerocurl$,
	\eqref{equ:conv current identified} follows.
	
	Finally, using the Gaffney estimate~\cite{AmrBerDauGir98} on $\mathscr D$,
	\[
	\norm{\bff v}{\bb H^1}
	\le
	C\left(
	\norm{\bff v}{\bb L^2}
	+
	\norm{\curl\bff v}{\bb L^2}
	+
	\norm{\divg\bff v}{L^2}
	\right),
	\qquad
	\bff v\in \hcurl \cap \hzerodiv.
	\]
	Noting that $\divg\bff B=0$, $\bff B\cdot\bff n=0$, and
	$\curl\bff B=\bff J$, we obtain
	\[
	\norm{\bff B}{\bb H^1}
	\le
	C\left(
	\norm{\bff B}{\bb L^2}
	+
	\norm{\bff J}{\bb L^2}
	\right).
	\]
	Consequently, $\bff B\in
	L^2\bigl(\widetilde\Omega;L^2_T(\bb H^1)\bigr)$. This completes the proof of the lemma.
\end{proof}

\begin{lemma}[Convergence of the deterministic terms]
	\label{lem:conv deterministic terms}
	Let $\bff\phi\in C^\infty_0(\mathscr D)\cap\bb V_\sigma $ and let
	$\bff\phi_h=\mathcal S_h\bff\phi$, where $\mathcal{S}_h$ is the Stokes projection defined in \eqref{equ:stokes proj}, be a discretely
	divergence-free approximation satisfying
	\[
	\bff\phi_h\to\bff\phi
	\quad\text{in }\bb H^1,
	\qquad
	\norm{\bff\phi_h}{\bb W^{1,3}\cap \bb{L}^\infty}\le C(\bff\phi).
	\]
	Then, for every $t\in[0,T]$, $\widetilde{\bb P}$-a.s.,
	\begin{align}
		\label{equ:conv viscous}
		\int_0^t
		\inpro{\nabla\widetilde{\bff u}_{h,\tau}^{+}}
		{\nabla\bff\phi_h}\ds
		&\to
		\int_0^t
		\inpro{\nabla\bff u}{\nabla\bff\phi}\ds,
		\\
		\label{equ:conv convection}
		\frac12\int_0^t
		\Big[
		\inpro{(\widetilde{\bff u}_{h,\tau}^{-}\cdot\nabla)
			\widetilde{\bff u}_{h,\tau}^{+}}{\bff\phi_h}
		-
		\inpro{(\widetilde{\bff u}_{h,\tau}^{-}\cdot\nabla)
			\bff\phi_h}{\widetilde{\bff u}_{h,\tau}^{+}}
		\Big]\ds
		&\to
		\int_0^t
		\inpro{(\bff u\cdot\nabla)\bff u}{\bff\phi}\ds,
		\\
		\label{equ:conv Lorentz}
		\int_0^t
		\inpro{\widetilde{\bff J}_{h,\tau}^{+}
			\times\widetilde{\bff B}_{h,\tau}^{-}}{\bff\phi_h}\ds
		&\to
		\int_0^t
		\inpro{\bff J\times\bff B}{\bff\phi}\ds .
	\end{align}
	Moreover, let
	$\bff\psi\in C^\infty(\overline{\mathscr D})\cap\hzerodiv$ and let $\bff\psi_h:=\Pi_h^{\bb{RT}}\bff\psi$, so that (by \eqref{equ:conv curlh projection}--\eqref{equ:conv magnetic projection Linfty}):
	\[
	\bff\psi_h\to\bff\psi
	\quad\text{in }\bb L^2,
	\qquad
	\curlh\bff\psi_h\to\curl\bff\psi
	\quad\text{in }\bb L^3,
	\qquad
	\norm{\curlh\bff\psi_h}{\bb L^\infty}\le C(\bff\psi).
	\]
	Then, for every $t\in[0,T]$, $\widetilde{\bb P}$-a.s.,
	\begin{align}
		\label{equ:conv magnetic resistive}
		\int_0^t
		\inpro{\widetilde{\bff J}_{h,\tau}^{+}}{\curlh\bff\psi_h}\ds
		&\to
		\int_0^t
		\inpro{\bff J}{\curl\bff\psi}\ds,
		\\
		\label{equ:conv magnetic Hall}
		\int_0^t
		\inpro{\widetilde{\bff J}_{h,\tau}^{+}
			\times\widetilde{\bff B}_{h,\tau}^{-}}
		{\curlh\bff\psi_h}\ds
		&\to
		\int_0^t
		\inpro{\bff J\times\bff B}{\curl\bff\psi}\ds,
		\\
		\label{equ:conv magnetic transport}
		\int_0^t
		\inpro{\widetilde{\bff u}_{h,\tau}^{+}
			\times\widetilde{\bff B}_{h,\tau}^{-}}
		{\curlh\bff\psi_h}\ds
		&\to
		\int_0^t
		\inpro{\bff u\times\bff B}{\curl\bff\psi}\ds .
	\end{align}
\end{lemma}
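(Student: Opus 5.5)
The convergence of each integral follows one pattern. We pass to the limit pathwise, for a fixed $\widetilde\omega$ in the full-measure set where Lemma~\ref{lem:conv sk representation} holds. Each integrand is a product with at most one weakly convergent factor; all other factors converge strongly, and the test functions are bounded in $\bb L^\infty$ or $\bb W^{1,3}$. Multiplying by $\mathbf 1_{[0,t]}$ does not affect weak convergence in $L^2_T$, so every $t\in[0,T]$ can be handled at once.

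\emph{Viscous term.} Split $\inpro{\nabla\widetilde{\bff u}^+}{\nabla\bff\phi_h}=\inpro{\nabla\widetilde{\bff u}^+}{\nabla\bff\phi}+\inpro{\nabla\widetilde{\bff u}^+}{\nabla(\bff\phi_h-\bff\phi)}$. The first piece converges by \eqref{equ:conv sk weak}. The second is bounded by $\norm{\widetilde{\bff u}^+}{L^2_T(\bb H^1)}\norm{\bff\phi_h-\bff\phi}{\bb H^1}\sqrt{T}$. This tends to zero because a weakly convergent sequence is bounded, pathwise.

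\emph{Convection term.} Write $\bff u^\pm$ for the tilded interpolants. First replace $\bff\phi_h$ by $\bff\phi$. The error is at most
\[
\norm{\bff u^-}{L^2_T(\bb L^2)}\norm{\nabla\bff u^+}{L^2_T(\bb L^2)}\norm{\bff\phi_h-\bff\phi}{\bb L^\infty}
+\norm{\bff u^-}{L^2_T(\bb L^2)}\norm{\bff u^+}{L^2_T(\bb L^6)}\norm{\nabla(\bff\phi_h-\bff\phi)}{\bb L^3}.
\]
To make this vanish, I would use that $\bff\phi$ is smooth with compact support. The approximation estimate \eqref{equ:Stokes approx} together with an inverse/$L^\infty$ Stokes estimate then gives $\bff\phi_h\to\bff\phi$ in $\bb W^{1,3}\cap\bb L^\infty$. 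Alternatively, the convergence only in $\bb H^1$ together with the uniform $\bb W^{1,3}\cap\bb L^\infty$ bound suffices after interpolation: $\norm{\cdot}{\bb L^\infty}$ is controlled by interpolating $\bb W^{1,3}$ against $\bb L^2$. For the $\nabla$ term I would instead move the derivative using $\bff u^+\in\bb H^1_0$ and then use $\bb L^3$ convergence of $\nabla(\bff\phi_h-\bff\phi)$ via interpolation between $\bb L^2$ and a uniform bound. This is the most delicate point, so I expect this step to be the main technical obstacle.

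With $\bff\phi$ fixed, $(\bff u^-\cdot\nabla)\bff\phi\cdot\bff u^+$ converges strongly in $L^1_T$ because both factors converge strongly in $L^2_T(\bb L^2)$ and $\nabla\bff\phi\in\bb L^\infty$. For $\inpro{(\bff u^-\cdot\nabla)\bff u^+}{\bff\phi}$, the product $\bff u^-\otimes\bff\phi\to\bff u\otimes\bff\phi$ strongly in $L^2_T(\bb L^2)$ and $\nabla\bff u^+\rightharpoonup\nabla\bff u$ weakly there, so the product converges. Finally, the limit equals $\frac12[\inpro{(\bff u\cdot\nabla)\bff u}{\bff\phi}-\inpro{(\bff u\cdot\nabla)\bff\phi}{\bff u}]$. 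Integrating by parts with $\divg\bff u=0$ (Lemma~\ref{lem:conv constraints current}) and $\bff u=0$ on $\partial\mathscr D$ gives $-\inpro{(\bff u\cdot\nabla)\bff\phi}{\bff u}=\inpro{(\bff u\cdot\nabla)\bff u}{\bff\phi}$, which yields \eqref{equ:conv convection}.

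\emph{Lorentz and magnetic terms.} For \eqref{equ:conv Lorentz} we write $\inpro{\bff J^+\times\bff B^-}{\bff\phi_h}=\inpro{\bff J^+}{\bff B^-\times\bff\phi_h}$. Since $\bff B^-\to\bff B$ in $L^2_T(\bb L^2)$ and $\bff\phi_h$ is uniformly bounded in $\bb L^\infty$ and converges in $\bb L^2$, the product $\bff B^-\times\bff\phi_h$ converges strongly in $L^2_T(\bb L^2)$. To see this, split it as $(\bff B^--\bff B)\times\bff\phi_h+\bff B\times(\bff\phi_h-\bff\phi)$; the second part vanishes by dominated convergence along an a.e.\ subsequence, the limit being unique. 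Paired with $\bff J^+\rightharpoonup\bff J$, this gives the claim. The magnetic terms are identical: \eqref{equ:conv magnetic resistive} is a weak--strong pairing because $\curlh\bff\psi_h\to\curl\bff\psi$ in $\bb L^3\subset\bb L^2$. For \eqref{equ:conv magnetic Hall} and \eqref{equ:conv magnetic transport}, the products $\bff B^-\times\curlh\bff\psi_h$ converge strongly in $L^2_T(\bb L^2)$ by the uniform $\bb L^\infty$ bound plus $\bb L^3$ convergence. The Hall term is then paired with the weakly convergent $\bff J^+$. The transport term pairs with $\bff u^+$, which converges strongly, so it is a product of strongly convergent factors in $L^1$.
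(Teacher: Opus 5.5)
Your treatment of the viscous, Lorentz and three magnetic terms is the paper's weak--strong pairing argument. Your dominated-convergence handling of $\bff B\times(\curlh\bff\psi_h-\curl\bff\psi)$ is even slightly more elementary than the paper's appeal to $\bff B\in L^2_T(\bb L^6)$. The real difference is the convection term. By first replacing $\bff\phi_h$ with $\bff\phi$ through crude H\"older bounds, you need $\bff\phi_h\to\bff\phi$ in $\bb L^\infty$ and $\nabla\bff\phi_h\to\nabla\bff\phi$ in $\bb L^3$, which the lemma does not provide.

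Only your first justification delivers this. Let $\mathcal I_h\bff\phi\in\bb V_h^0$ be the nodal $\bb P_1$ interpolant. Then \eqref{equ:Stokes approx} gives $\norm{\bff\phi_h-\mathcal I_h\bff\phi}{\bb H^1}\le Ch$. The inverse estimates $\norm{\nabla\bff v_h}{\bb L^3}\le Ch^{-1/2}\norm{\nabla\bff v_h}{\bb L^2}$ and $\norm{\bff v_h}{\bb L^\infty}\le Ch^{-1/2}\norm{\bff v_h}{\bb L^6}$ on quasi-uniform meshes then yield convergence at rate $h^{1/2}$.

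Your interpolation alternative does not work. $\bb W^{1,3}$ is the critical Sobolev space in three dimensions and does not embed into $\bb L^\infty$. Concentrating profiles $\bff v(n\,\cdot)$ stay bounded in $\bb W^{1,3}\cap\bb L^\infty$, tend to zero in $\bb L^2$, yet keep their sup norm. So no interpolation of the uniform bound against $\bb L^2$ convergence controls $\norm{\bff\phi_h-\bff\phi}{\bb L^\infty}$. By the same example, $\bb L^2$ convergence of $\nabla(\bff\phi_h-\bff\phi)$ plus a uniform $\bb L^3$ bound gives convergence in $\bb L^q$ only for $q<3$.

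The paper never needs this stronger convergence, because it keeps $\bff\phi_h$ inside the strongly convergent factor. For the first half of the skew form, $\bff\phi_h\otimes\widetilde{\bff u}^-_{h,\tau}\to\bff\phi\otimes\bff u$ in $L^2_T(\bb L^2)$. This follows from the uniform $\bb L^\infty$ bound times $\widetilde{\bff u}^-_{h,\tau}-\bff u$, plus the term $\norm{\bff\phi_h-\bff\phi}{\bb L^3}\norm{\bff u}{\bb L^6}$. It is then paired with $\nabla\widetilde{\bff u}^+_{h,\tau}\rightharpoonup\nabla\bff u$.

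For the second half, $(\widetilde{\bff u}^-_{h,\tau}\cdot\nabla)\bff\phi_h\to(\bff u\cdot\nabla)\bff\phi$ in $L^2_T(\bb L^{6/5})$. This follows from the uniform $\bb W^{1,3}$ bound times $\widetilde{\bff u}^-_{h,\tau}-\bff u$ in $\bb L^2$, plus the term $\norm{\nabla(\bff\phi_h-\bff\phi)}{\bb L^2}\norm{\bff u}{\bb L^3}$. It is then paired with $\widetilde{\bff u}^+_{h,\tau}\rightharpoonup\bff u$ weakly in $L^2_T(\bb L^6)$.

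This uses exactly the lemma's hypotheses, namely $\bb H^1$ convergence plus uniform bounds, and no mesh-dependent inequality. It is the same splitting you already use for the Lorentz term. Your route, once the inverse-estimate step is written out, is also valid. With $\bff\phi$ fixed, it lets the second half rely only on strong $L^2_T(\bb L^2)$ convergence of $\widetilde{\bff u}^\pm_{h,\tau}$. The cost is quasi-uniformity and a rate for the Stokes projection, neither of which the statement itself invokes.
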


\begin{proof}
	The convergence \eqref{equ:conv viscous} follows directly from
	$\widetilde{\bff u}_{h,\tau}^{+}\rightharpoonup\bff u$ in
	$L^2_T(\bb H_0^1)$ and $\bff\phi_h\to\bff\phi$ in $\bb H^1$.
	
	For the convection term, the first part of the skew-symmetric form is treated by
	writing
	\[
	\inpro{(\widetilde{\bff u}_{h,\tau}^{-}\cdot\nabla)
		\widetilde{\bff u}_{h,\tau}^{+}}{\bff\phi_h}
	=
	\int_{\mathscr D}
	\nabla\widetilde{\bff u}_{h,\tau}^{+}:
	(\bff\phi_h \otimes \widetilde{\bff u}_{h,\tau}^{-})\dx .
	\]
	Here $\nabla\widetilde{\bff u}_{h,\tau}^{+}\rightharpoonup\nabla\bff u$
	in $L^2_T(\bb L^2)$, while
	$\bff\phi_h \otimes \widetilde{\bff u}_{h,\tau}^{-} 
	\to \bff\phi \otimes \bff u$ strongly in $L^2_T(\bb L^2)$. The second part of the skew form follows from the fact that
	$\widetilde{\bff u}_{h,\tau}^{-}\to\bff u$ strongly in
	$L^2_T(\bb L^2)$,
	$\widetilde{\bff u}_{h,\tau}^{+}\rightharpoonup\bff u$ weakly in
	$L^2_T(\bb L^6)$, the uniform $\bb W^{1,3}$ bound on $\bff\phi_h$,
	and $\bff\phi_h\to\bff\phi$ in $\bb H^1$. Since $\divg\bff u=0$, the
	limit of the skew form is
	$\int_0^t\inpro{(\bff u\cdot\nabla)\bff u}{\bff\phi}\ds$, showing \eqref{equ:conv convection}.
	
	For the Lorentz term,
	\[
	\inpro{\widetilde{\bff J}_{h,\tau}^{+}
		\times\widetilde{\bff B}_{h,\tau}^{-}}{\bff\phi_h}
	=
	\inpro{\widetilde{\bff J}_{h,\tau}^{+}}
	{\widetilde{\bff B}_{h,\tau}^{-}\times\bff\phi_h}.
	\]
	The first factor converges weakly in $L^2_T(\bb L^2)$, while the second
	converges strongly in $L^2_T(\bb L^2)$. This proves
	\eqref{equ:conv Lorentz}.
	
	The convergence \eqref{equ:conv magnetic resistive} follows from
	$\widetilde{\bff J}_{h,\tau}^{+}\rightharpoonup\bff J$ in
	$L^2_T(\bb L^2)$ and
	$\curlh\bff\psi_h\to\curl\bff\psi$ in $\bb L^2$. For
	\eqref{equ:conv magnetic Hall}, write
	\[
	\inpro{\widetilde{\bff J}_{h,\tau}^{+}
		\times\widetilde{\bff B}_{h,\tau}^{-}}{\curlh\bff\psi_h}
	=
	\inpro{\widetilde{\bff J}_{h,\tau}^{+}}
	{\widetilde{\bff B}_{h,\tau}^{-}\times\curlh\bff\psi_h}.
	\]
	The second factor converges strongly in $L^2_T(\bb L^2)$ because
	\[
	\norm{(\widetilde{\bff B}_{h,\tau}^{-}-\bff B)
		\times\curlh\bff\psi_h}{L^2_T(\bb L^2)}
	\le
	C(\bff\psi)
	\norm{\widetilde{\bff B}_{h,\tau}^{-}-\bff B}{L^2_T(\bb L^2)}
	\to0,
	\]
	and, using the fact $\bff B\in L^2_T(\bb L^6)$,
	\[
	\norm{\bff B\times(\curlh\bff\psi_h-\curl\bff\psi)}
	{L^2_T(\bb L^2)}
	\le
	\norm{\bff B}{L^2_T(\bb L^6)}
	\norm{\curlh\bff\psi_h-\curl\bff\psi}{\bb L^3}
	\to0.
	\]
	This proves \eqref{equ:conv magnetic Hall}. 
	
	Finally, \eqref{equ:conv magnetic transport} follows from the strong
	convergence of $\widetilde{\bff u}_{h,\tau}^{+}$ and
	$\widetilde{\bff B}_{h,\tau}^{-}$ in $L^2_T(\bb L^2)$, the uniform
	$\bb L^\infty$ bound on $\curlh\bff\psi_h$, and
	$\curlh\bff\psi_h\to\curl\bff\psi$ in $\bb L^3$, noting that
	$\bff u\times\bff B\in L^1_T(\bb L^{3/2})$.
\end{proof}

\begin{lemma}[Identification of the limiting martingales]
	\label{lem:conv martingale identification}
	Let $\bff\phi$, $\bff\phi_h$, $\bff\psi$, and $\bff\psi_h$ be as in
	Lemma~\ref{lem:conv deterministic terms}, and define
	\begin{align}
		\label{equ:lim mart u theorem}
		M_u^{\bff\phi}(t)
		&:=
		\inpro{\bff u(t)}{\bff\phi}
		-
		\inpro{\bff u_0}{\bff\phi}
		+
		\nu\int_0^t
		\inpro{\nabla\bff u}{\nabla\bff\phi}\ds
		+
		\int_0^t
		\inpro{(\bff u\cdot\nabla)\bff u}{\bff\phi}\ds
		\nonumber\\
		&\quad
		-
		\int_0^t
		\inpro{\bff J\times\bff B}{\bff\phi}\ds
		-
		\int_0^t
		\inpro{\bff f}{\bff\phi}\ds,
	\end{align}
	and
	\begin{align}
		\label{equ:lim mart B theorem}
		M_B^{\bff\psi}(t)
		&:=
		\inpro{\bff B(t)}{\bff\psi}
		-
		\inpro{\bff B_0}{\bff\psi}
		+
		\sigma\int_0^t
		\inpro{\bff J}{\curl\bff\psi}\ds
		\nonumber\\
		&\quad
		+
		\eta\int_0^t
		\inpro{\bff J\times\bff B}{\curl\bff\psi}\ds
		-
		\int_0^t
		\inpro{\bff u\times\bff B}{\curl\bff\psi}\ds .
	\end{align}
	Let $\{\widetilde{\mathscr F}_t\}_{t\in[0,T]}$ be the usual augmentation
	of the natural filtration generated by $(\bff u,\bff B)$. Then
	$M_u^{\bff\phi}$ and $M_B^{\bff\psi}$ are continuous square-integrable
	$\{\widetilde{\mathscr F}_t\}$-martingales.
	
	Moreover, for all admissible test functions
	$\bff\phi_1,\bff\phi_2,\bff\psi_1,\bff\psi_2$,
	\begin{align}
		\label{equ:lim qv u}
		\left\langle
		M_u^{\bff\phi_1},
		M_u^{\bff\phi_2}
		\right\rangle_t
		&=
		\int_0^t
		\sum_{i=1}^{\infty}
		\inpro{\bff g_i(\bff u,\bff B)}{\bff\phi_1}
		\inpro{\bff g_i(\bff u,\bff B)}{\bff\phi_2}
		\ds,
		\\
		\label{equ:lim qv B}
		\left\langle
		M_B^{\bff\psi_1},
		M_B^{\bff\psi_2}
		\right\rangle_t
		&=
		\int_0^t
		\sum_{i=1}^{\infty}
		\inpro{
			\sigma\bff\chi_i+\eta\bff\chi_i\times\bff B}
		{\curl\bff\psi_1}
		\inpro{
			\sigma\bff\chi_i+\eta\bff\chi_i\times\bff B}
		{\curl\bff\psi_2}
		\ds,
	\end{align}
	and
	\begin{align}
		\label{equ:lim cross qv zero}
		\left\langle
		M_u^{\bff\phi},
		M_B^{\bff\psi}
		\right\rangle_t
		=0.
	\end{align}
\end{lemma}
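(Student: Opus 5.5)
The plan is the standard martingale-identification route: build discrete martingales on the original space, carry their martingale and quadratic-variation identities to the Skorokhod space via equality of laws, and then pass to the limit using the convergences of Lemma~\ref{lem:conv sk representation} together with uniform moment bounds.

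\textbf{Discrete martingales.} For each $(h,\tau)$ and each test pair, test \eqref{equ:stoch fem u} with $\bff\phi_h=\mathcal S_h\bff\phi$ and \eqref{equ:stoch fem B} with $\bff\psi_h=\Pi_h^{\bb{RT}}\bff\psi$, sum up to the step containing $t$, and write everything in terms of the affine and piecewise-constant interpolants. This defines processes $M_{u,h}^{\bff\phi}(t)$ and $M_{B,h}^{\bff\psi}(t)$, equal to the sums of the stochastic increments $\sum_i\inpro{\bff g_{i,h,\tau}}{\bff\phi_h}\Delta_r\beta_i^u$ and $-\sum_i\inpro{\curl\bff H_{i,h,\tau}}{\bff\psi_h}\Delta_r\beta_i^B$ up to an affine-in-time remainder within the current step. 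On the grid they are discrete martingales with respect to $\mathscr F_{t_n}$. Their discrete quadratic (co)variations are explicit:
\[
\int\sum_i\inpro{\bff g_{i,h,\tau}}{\bff\phi_{1,h}}\inpro{\bff g_{i,h,\tau}}{\bff\phi_{2,h}},
\]
the analogous expression with $\curl\bff H_{i,h,\tau}$, and zero cross variation, which follows from the independence of $\beta^u$ and $\beta^B$.

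Equivalently, for every bounded continuous functional $\Upsilon$ of the path restricted to $[0,s]$, and $s<t$ grid points, three identities hold:
\[
\bb E[\Upsilon\,(M_h(t)-M_h(s))]=0,
\]
\[
\bb E[\Upsilon\,(M_h(t)^2-M_h(s)^2-\text{QV}_h(s,t))]=0,
\]
and the cross-term identity. These involve only the laws of the variables in $\mathcal Z_{h,\tau}$. The noise coefficients depend on $\bff u^-,\bff B^-$, and $\mathcal R_h\bff B^-$, and $\mathcal R_h$ is a deterministic map, so equality of laws transfers all three identities to the tilded variables on $\widetilde\Omega$.

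\textbf{Passing to the limit.} Pointwise a.s. convergence of $\widetilde M_h(t)\to M(t)$ follows from Lemma~\ref{lem:conv deterministic terms}, \eqref{equ:conv sk path}--\eqref{equ:conv Bk path}, and the convergence of the initial data. Off-grid times contribute a single-step error that vanishes. Vitali's theorem then applies because \eqref{equ:conv uniform bounds} gives uniform bounds in $L^{2p}$ for all $p$, hence uniform integrability of $|M_h|^2$ and of the quadratic variation. This yields the martingale property with respect to the filtration generated by $(\bff u,\bff B)$. Continuity of $M$ comes from $\bff u\in C([0,T];(\bb V^m_\sigma)')$, and square integrability from the moment bounds.

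For the velocity quadratic variation, use \eqref{equ:g lipschitz} together with the strong convergences $\widetilde{\bff u}^-\to\bff u$ and $\widetilde{\bff B}^-\to\bff B$ in $L^2_T(\bb L^2)$, and $\Pi_h^{\bb V}\to I$.

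\textbf{Main obstacle.} The hardest step is the magnetic quadratic variation. There I need
\[
\sum_i\inpro{\curl\Pi_h^{\bb X}(\sigma\bff\chi_i+\eta\bff\chi_i\times\mathcal R_h\widetilde{\bff B}^-)}{\Pi_h^{\bb{RT}}\bff\psi}\to\sum_i\inpro{\sigma\bff\chi_i+\eta\bff\chi_i\times\bff B}{\curl\bff\psi}.
\]
I would first rewrite the left inner product via the definition of $\curlh$ as
\[
\inpro{\Pi_h^{\bb X}(\cdots)}{\curlh\Pi_h^{\bb{RT}}\bff\psi}=\inpro{\cdots}{\Pi_h^{\bb X}\Pi_h^{\bb X}\curl\bff\psi},
\]
which moves all derivatives onto $\bff\psi$. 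It then suffices to show $\mathcal R_h\widetilde{\bff B}^-\to\bff B$ in $L^2_T(\bb L^2)$ in probability.

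For that I would invoke Lemma~\ref{lem:conv reconstruction consistency} pointwise in time. Its hypotheses are: $\divg\widetilde{\bff B}^-=0$; $\curlh\widetilde{\bff B}^-=\Pi_h^{\bb X}\widetilde{\bff J}^-$, which is bounded in $L^2$; and the limit satisfies $\curl\bff B=\bff J$ by Lemma~\ref{lem:conv constraints current}. However, the lemma needs weak convergence of $\curlh\widetilde{\bff B}^-$ at fixed $t$, while I only have it in $L^2_T$. So I would adapt the proof in space–time: use the $L^2_T(\bb H^1)$ bound on $\mathcal R_h\widetilde{\bff B}^-$ and the same variational argument tested with $\rho(t)\bff\Phi_h$. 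This identifies the weak $L^2_T(\bb H^1)$ limit as $\bff B$. Strong convergence then follows from $\norm{\mathcal R_h\widetilde{\bff B}^--\widetilde{\bff B}^-}{}\to0$, again by comparison of the energy norms, as was used in Lemma~\ref{lem:conv joint tightness}.

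The sum over $i$ is controlled by dominated convergence using \eqref{equ:chi assumption}. The cross variation is zero at the discrete level, and the product $M_{u,h}M_{B,h}$ is uniformly integrable, so the zero cross variation passes to the limit.
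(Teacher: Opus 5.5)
Your proposal is correct and follows essentially the same route as the paper: discrete residual martingales built with $\mathcal S_h\bff\phi$ and $\Pi_h^{\bb{RT}}\bff\psi$, transfer of the martingale and product-martingale identities to $\widetilde\Omega$ by equality of laws, passage to the limit using the Skorokhod convergences and uniform moment bounds, independence for the vanishing cross variation, and, for the magnetic covariance, moving the curl onto $\bff\psi$ so that only $\mathcal R_h\widetilde{\bff B}_{h,\tau}^{-}\to\bff B$ in $L^2_T(\bb L^2)$ is needed. For that last point your space--time weak identification is superfluous, and a weak $L^2_T(\bb H^1)$ limit alone would not give strong convergence; no comparison of energy norms is needed either, because the uniform estimate \eqref{equ:Rh L2 consistency asymptotic} of Lemma~\ref{lem:Maxwell reconstruction L2 consistency}, applied pointwise in time with $\divg\widetilde{\bff B}_{h,\tau}^{-}=0$ and $\curlh\widetilde{\bff B}_{h,\tau}^{-}=\Pi_h^{\bb X}\widetilde{\bff J}_{h,\tau}^{-}$, gives $\norm{\mathcal R_h\widetilde{\bff B}_{h,\tau}^{-}-\widetilde{\bff B}_{h,\tau}^{-}}{L^2_T(\bb L^2)}\le\varepsilon_{\mathcal R}(h)\bigl(\norm{\widetilde{\bff B}_{h,\tau}^{-}}{L^2_T(\bb L^2)}+\norm{\widetilde{\bff J}_{h,\tau}^{-}}{L^2_T(\bb L^2)}\bigr)\to0$ pathwise, which is exactly the fact the paper (and the tightness lemma) relies on.
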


\begin{proof}
	For $t\in[0,T]$, let $t_\tau:=\max\{t_n:t_n\le t\}$.
	On the original probability space, define
	\begin{align}
		\label{equ:discrete residual u}
		M_{u,h}^{\bff\phi}(t)
		&:=
		\inpro{\bff u_{h,\tau}(t_\tau)}{\bff\phi_h}
		-
		\inpro{\bff u_h^0}{\bff\phi_h}
		+
		\nu\int_0^{t_\tau}
		\inpro{\nabla\bff u_{h,\tau}^{+}}
		{\nabla\bff\phi_h}\ds
		\nonumber\\
		&\quad
		+
		\frac12\int_0^{t_\tau}
		\left[
		\inpro{
			(\bff u_{h,\tau}^{-}\cdot\nabla)
			\bff u_{h,\tau}^{+}}
		{\bff\phi_h}
		-
		\inpro{
			(\bff u_{h,\tau}^{-}\cdot\nabla)\bff\phi_h}
		{\bff u_{h,\tau}^{+}}
		\right]\ds
		\nonumber\\
		&\quad
		-
		\int_0^{t_\tau}
		\inpro{
			\bff J_{h,\tau}^{+}\times\bff B_{h,\tau}^{-}}
		{\bff\phi_h}\ds
		-
		\int_0^{t_\tau}
		\inpro{\bff f_\tau^{+}}{\bff\phi_h}\ds .
	\end{align}
	Since $\bff\phi_h$ is discretely divergence-free,
	\eqref{equ:stoch fem u} and \eqref{equ:stoch fem div u} imply
	\begin{align*}
		M_{u,h}^{\bff\phi}(t)
		=
		\sum_{t_n\le t}
		\sum_{i=1}^{\infty}
		\inpro{
			\bff g_i(\bff u_h^{n-1},\bff B_h^{n-1})}
		{\bff\phi_h}
		\Delta_n\beta_i^u .
	\end{align*}
	
	Similarly, define
	\begin{align}
		\label{equ:discrete residual B}
		M_{B,h}^{\bff\psi}(t)
		&:=
		\inpro{\bff B_{h,\tau}(t_\tau)}{\bff\psi_h}
		-
		\inpro{\bff B_h^0}{\bff\psi_h}
		+
		\sigma\int_0^{t_\tau}
		\inpro{\bff J_{h,\tau}^{+}}{\curlh\bff\psi_h}\ds
		\nonumber\\
		&\quad
		+
		\eta\int_0^{t_\tau}
		\inpro{
			\bff J_{h,\tau}^{+}\times\bff B_{h,\tau}^{-}}
		{\curlh\bff\psi_h}\ds
		-
		\int_0^{t_\tau}
		\inpro{
			\bff u_{h,\tau}^{+}\times\bff B_{h,\tau}^{-}}
		{\curlh\bff\psi_h}\ds .
	\end{align}
	Using \eqref{equ:stoch fem B} and
	\eqref{equ:stoch fem Ohm}, with
	$\bff\chi_h=\curlh\bff\psi_h\in\bb X_h^0$, gives
	\begin{align*}
		M_{B,h}^{\bff\psi}(t)
		=
		-\sum_{t_n\le t}
		\sum_{i=1}^{\infty}
		\inpro{
			\sigma\bff\chi_i
			+
			\eta\bff\chi_i\times\mathcal R_h\bff B_h^{n-1}}
		{\curlh\bff\psi_h}
		\Delta_n\beta_i^B .
	\end{align*}
	Indeed,
	\[
	\inpro{\curl\Pi_h^{\bb X}\bff Z}{\bff\psi_h}
	=
	\inpro{\Pi_h^{\bb X}\bff Z}{\curlh\bff\psi_h}
	=
	\inpro{\bff Z}{\curlh\bff\psi_h},
	\]
	since $\curlh\bff\psi_h\in\bb X_h^0$ and
	$\Pi_h^{\bb X}$ is the $\bb L^2$-orthogonal projection onto
	$\bb X_h^0$.
	Thus $M_{u,h}^{\bff\phi}$ and $M_{B,h}^{\bff\psi}$ are piecewise constant
	square-integrable martingales with respect to the discrete numerical
	filtration.
	
	After the Skorokhod representation, define
	$\widetilde M_{u,h}^{\bff\phi}$ and
	$\widetilde M_{B,h}^{\bff\psi}$ by the defining expressions in
	\eqref{equ:discrete residual u} and
	\eqref{equ:discrete residual B}, respectively, with all numerical
	variables replaced by their tilded copies. The residuals and the bounded
	cylindrical functionals of the discrete history used below are measurable
	functions of the joint numerical variables. Hence, equality of the joint
	laws transfers the corresponding martingale-test and product-martingale
	expectation identities to the new probability space.
	
	\emph{Step 1: Convergence of the covariance coefficients.}
	By the Lipschitz assumption on the velocity noise,
	\eqref{equ:conv sk u strong}, \eqref{equ:conv sk B strong}, and the
	convergence of $\bff\phi_h$, we have
	\begin{align}
		\label{equ:conv velocity martingale coefficients}
		\widetilde{\bb E} \left[
		\int_0^T
		\sum_{i=1}^{\infty}
		\abs{
			\inpro{
				\bff g_i(
				\widetilde{\bff u}_{h,\tau}^{-},
				\widetilde{\bff B}_{h,\tau}^{-})}
			{\bff\phi_h}
			-
			\inpro{\bff g_i(\bff u,\bff B)}{\bff\phi}
		}^2
		\ds \right]
		\longrightarrow0.
	\end{align}
	Indeed, the almost-sure strong convergences from
	Lemma~\ref{lem:conv sk representation}, together with the higher-moment bounds from Proposition~\ref{prop:stoch stability revised}, give the corresponding convergence in mean by uniform integrability.
	
	Likewise, the reconstruction strong convergence	$\mathcal R_h\widetilde{\bff B}_{h,\tau}^{-}
	\to\bff B$ in $L^2_T(\bb L^2)$,
	the convergence of $\curlh\bff\psi_h$, and the summability assumptions on
	$\{\bff\chi_i\}_{i=1}^{\infty}$ imply
	\begin{align}
		\label{equ:conv magnetic martingale coefficients}
		&\widetilde{\bb E} \left[
		\int_0^T
		\sum_{i=1}^{\infty}
		\abs{
			\inpro{
				\sigma\bff\chi_i
				+
				\eta\bff\chi_i\times
				\mathcal R_h\widetilde{\bff B}_{h,\tau}^{-}}
			{\curlh\bff\psi_h}
			-
			\inpro{
				\sigma\bff\chi_i+\eta\bff\chi_i\times\bff B}
			{\curl\bff\psi}
		}^2
		\ds \right]
		\longrightarrow0.
	\end{align}
	
	\emph{Step 2: Martingale property of the limiting residuals.}
	Fix $0\le s<t\le T$, an integer $\ell\ge1$, and times
	$0\le s_1<\cdots<s_\ell\le s$.
	For $j=1,\ldots,\ell$, set $s_{j,\tau}:=\max\{t_n:t_n\le s_j\}$,
	and let $w_j\in C_{\mathrm b} \left((\bb V^m_\sigma)'\times\widetilde{\bb H}^{-m};\mathbb R\right)$.
	Since $s_{j,\tau}\le s_\tau$ and the affine interpolants agree with the
	nodal values at grid points, the random variable
	$\prod_{j=1}^{\ell}
	w_j\left(
	\bff u_{h,\tau}(s_{j,\tau}),
	\bff B_{h,\tau}(s_{j,\tau})
	\right)$
	is $\mathscr F_{s_\tau}$-measurable. Hence,
	\begin{align}
		\label{equ:discrete martingale test u}
		&\bb E\left[
		\left(
		M_{u,h}^{\bff\phi}(t)
		-
		M_{u,h}^{\bff\phi}(s)
		\right)
		\prod_{j=1}^{\ell}
		w_j\left(
		\bff u_{h,\tau}(s_{j,\tau}),
		\bff B_{h,\tau}(s_{j,\tau})
		\right)
		\right]
		=0,
		\\
		\label{equ:discrete martingale test B}
		&\bb E\left[
		\left(
		M_{B,h}^{\bff\psi}(t)
		-
		M_{B,h}^{\bff\psi}(s)
		\right)
		\prod_{j=1}^{\ell}
		w_j\left(
		\bff u_{h,\tau}(s_{j,\tau}),
		\bff B_{h,\tau}(s_{j,\tau})
		\right)
		\right]
		=0.
	\end{align}
	By equality of the joint laws, the analogous identities hold for the
	tilded variables.
	
	From \eqref{equ:conv sk path}--\eqref{equ:conv Bk path}, the continuity of the limiting paths, and
	$s_{j,\tau}\to s_j$, we obtain, $\widetilde{\bb P}$-a.s.
	\[
	\prod_{j=1}^{\ell}
	w_j\left(
	\widetilde{\bff u}_{h,\tau}(s_{j,\tau}),
	\widetilde{\bff B}_{h,\tau}(s_{j,\tau})
	\right)
	\longrightarrow
	\prod_{j=1}^{\ell}
	w_j\left(
	\bff u(s_j),\bff B(s_j)
	\right).
	\]
	Moreover, for every fixed $r\in[0,T]$,
	\begin{align}
		\label{equ:conv residuals}
		\widetilde M_{u,h}^{\bff\phi}(r)
		&\longrightarrow
		M_u^{\bff\phi}(r),
		\qquad
		\widetilde M_{B,h}^{\bff\psi}(r)
		\longrightarrow
		M_B^{\bff\psi}(r)
	\end{align}
	in probability. Indeed, the path convergences
	\eqref{equ:conv sk path}--\eqref{equ:conv Bk path}, the continuity of the
	limiting paths, $r_\tau\to r$, the convergence of the discrete test
	functions, and the uniform energy bounds imply
	\[
	\inpro{
		\widetilde{\bff u}_{h,\tau}(r_\tau)}
	{\bff\phi_h}
	\longrightarrow
	\inpro{\bff u(r)}{\bff\phi},
	\qquad
	\inpro{
		\widetilde{\bff B}_{h,\tau}(r_\tau)}
	{\bff\psi_h}
	\longrightarrow
	\inpro{\bff B(r)}{\bff\psi}
	\]
	in probability. The assumptions on the approximations of the initial data and the forcing term give
	\[
	\inpro{\widetilde{\bff u}_h^0}{\bff\phi_h}
	\longrightarrow
	\inpro{\bff u_0}{\bff\phi},
	\qquad
	\inpro{\widetilde{\bff B}_h^0}{\bff\psi_h}
	\longrightarrow
	\inpro{\bff B_0}{\bff\psi},
	\qquad
	\int_0^{r_\tau}
	\inpro{\bff f_\tau^+}{\bff\phi_h}\ds
	\longrightarrow
	\int_0^r
	\inpro{\bff f}{\bff\phi}\ds.
	\]
	For the remaining deterministic terms, apply
	Lemma~\ref{lem:conv deterministic terms} with the fixed upper limit $r$.
	The difference between the integrals over $[0,r]$ and $[0,r_\tau]$ is
	supported on the interval between $r_\tau$ and $r$. The same H\"older
	estimates used in the proof of
	Lemma~\ref{lem:conv deterministic terms}, together with the uniform
	stability bounds and $\abs{r-r_\tau}\le\tau$, show that all these
	remainder terms converge to zero in probability.
	
	By the discrete BDG inequality (Lemma~\ref{lem:discrete-BDG}), the noise-growth assumptions,
	Proposition~\ref{prop:stoch stability revised}, and equality of the joint
	laws, we have
	\begin{align}
		\label{equ:uniform residual moments}
		\sup_{h,\tau}
		\widetilde{\bb E}\left[
		\max_{0\le r\le T}
		\left(
		\abs{\widetilde M_{u,h}^{\bff\phi}(r)}^4
		+
		\abs{\widetilde M_{B,h}^{\bff\psi}(r)}^4
		\right)
		\right]
		<\infty .
	\end{align}
	Therefore, the products in
	\eqref{equ:discrete martingale test u} and
	\eqref{equ:discrete martingale test B} are uniformly integrable.
	Passing to the limit yields
	\begin{align*}
	\widetilde{\bb E}\left[
	\left(
	M_u^{\bff\phi}(t)-M_u^{\bff\phi}(s)
	\right)
	\prod_{j=1}^{\ell}
	w_j\left(\bff u(s_j),\bff B(s_j)\right)
	\right]
	&=0,
	\\
	\widetilde{\bb E}\left[
	\left(
	M_B^{\bff\psi}(t)-M_B^{\bff\psi}(s)
	\right)
	\prod_{j=1}^{\ell}
	w_j\left(\bff u(s_j),\bff B(s_j)\right)
	\right]
	&=0.
	\end{align*}
	Since the state spaces are separable and $(\bff u,\bff B)$ has
	continuous paths, the preceding identities for bounded continuous cylindrical
	functions imply, by a monotone-class argument, that
	$M_u^{\bff\phi}$ and $M_B^{\bff\psi}$ are martingales with respect to the
	usual augmentation of the natural filtration of $(\bff u,\bff B)$.
	By \eqref{equ:conv current identified}, this is also the natural filtration of
	$(\bff u,\bff B,\bff J)$. Their square integrability follows from
	\eqref{equ:uniform residual moments} and Fatou's lemma, while continuity
	follows from \eqref{equ:lim mart u theorem} and
	\eqref{equ:lim mart B theorem}.

	\emph{Step 3: Identification of the covariations.}
	Let $\bff\phi_1,\bff\phi_2$ be two velocity test functions, $\bff\psi_1,\bff\psi_2$ be two magnetic test functions, and define $\bff\phi_{j,h}:=\mathcal S_h\bff\phi_j$ and $\bff\psi_{j,h}:=\Pi_h^{\bb{RT}}\bff\psi_j$, for $j=1,2$.
	For the velocity test functions $\bff\phi_1,\bff\phi_2$, the predictable
	covariation of the discrete residuals is
	\begin{align}
		\label{equ:discrete qv u pair}
		\inpro{M_{u,h}^{\bff\phi_1}}{M_{u,h}^{\bff\phi_2}}_t
		&=
		\int_0^{t_\tau}
		\sum_{i=1}^{\infty}
		\inpro{
			\bff g_i(\bff u_{h,\tau}^{-},\bff B_{h,\tau}^{-})}
		{\bff\phi_{1,h}}
		\inpro{
			\bff g_i(\bff u_{h,\tau}^{-},\bff B_{h,\tau}^{-})}
		{\bff\phi_{2,h}}
		\ds .
	\end{align}
	Correspondingly, for the magnetic test functions,
	\begin{align}
		\label{equ:discrete qv B pair}
		\inpro{M_{B,h}^{\bff\psi_1}}{M_{B,h}^{\bff\psi_2}}_t
		&=
		\int_0^{t_\tau}
		\sum_{i=1}^{\infty}
		\inpro{
			\sigma\bff\chi_i
			+
			\eta\bff\chi_i\times
			\mathcal R_h\bff B_{h,\tau}^{-}}
		{\curlh\bff\psi_{1,h}}
		\inpro{
			\sigma\bff\chi_i
			+
			\eta\bff\chi_i\times
			\mathcal R_h\bff B_{h,\tau}^{-}}
		{\curlh\bff\psi_{2,h}}
		\ds .
	\end{align}
	By equality of the joint laws, the corresponding martingale-test identities
	hold for the tilded variables. Hence the tilded residuals are discrete
	martingales with respect to the natural discrete filtration generated by
	the tilded numerical variables, and their predictable covariations are
	given by the tilded versions of \eqref{equ:discrete qv u pair} and
	\eqref{equ:discrete qv B pair}.
	
	By applying
	\eqref{equ:conv velocity martingale coefficients} and
	\eqref{equ:conv magnetic martingale coefficients} to each of the two
	test functions, and using the Cauchy--Schwarz inequality in $\ell^2$,
	we obtain
	\begin{align*}
		&\widetilde{\bb E}\left[
		\sup_{t\in[0,T]}
		\abs{
			\left\langle
			\widetilde M_{u,h}^{\bff\phi_1},
			\widetilde M_{u,h}^{\bff\phi_2}
			\right\rangle_t
			-
			\int_0^t
			\sum_{i=1}^{\infty}
			\inpro{\bff g_i(\bff u,\bff B)}{\bff\phi_1}
			\inpro{\bff g_i(\bff u,\bff B)}{\bff\phi_2}
			\ds
		}
		\right]
		\longrightarrow0,
		\\
		&\widetilde{\bb E}\left[
		\sup_{t\in[0,T]}
		\abs{
			\left\langle
			\widetilde M_{B,h}^{\bff\psi_1},
			\widetilde M_{B,h}^{\bff\psi_2}
			\right\rangle_t
			-
			\int_0^t
			\sum_{i=1}^{\infty}
			\inpro{
				\sigma\bff\chi_i+\eta\bff\chi_i\times\bff B}
			{\curl\bff\psi_1}
			\inpro{
				\sigma\bff\chi_i+\eta\bff\chi_i\times\bff B}
			{\curl\bff\psi_2}
			\ds
		}
		\right]
		\longrightarrow0.
	\end{align*}
	The convergence of the covariance densities follows from
	\eqref{equ:conv velocity martingale coefficients} and
	\eqref{equ:conv magnetic martingale coefficients}, together with the
	Cauchy--Schwarz inequality in $\ell^2$. The contributions over $[t_\tau,t]$ converge uniformly in $t$ to zero a.s. by the absolute continuity of the limiting covariance integrals, and the convergence also holds in $L^1(\widetilde\Omega)$ by dominated convergence, using the noise-growth assumptions and the moment bounds.
	
	For each pair of velocity test functions, the process
	\[
	\widetilde M_{u,h}^{\bff\phi_1}(t)
	\widetilde M_{u,h}^{\bff\phi_2}(t)
	-
	\left\langle
	\widetilde M_{u,h}^{\bff\phi_1},
	\widetilde M_{u,h}^{\bff\phi_2}
	\right\rangle_t
	\]
	is a discrete martingale, and the analogous assertion holds for two
	magnetic residuals. Testing these product-martingale identities by the same
	products of bounded continuous functions used in Step~2 and passing to
	the limit shows that
	\[
	M_u^{\bff\phi_1}(t)M_u^{\bff\phi_2}(t)
	-
	\int_0^t
	\sum_{i=1}^{\infty}
	\inpro{\bff g_i(\bff u,\bff B)}{\bff\phi_1}
	\inpro{\bff g_i(\bff u,\bff B)}{\bff\phi_2}
	\ds
	\]
	and
	\[
	M_B^{\bff\psi_1}(t)M_B^{\bff\psi_2}(t)
	-
	\int_0^t
	\sum_{i=1}^{\infty}
	\inpro{
		\sigma\bff\chi_i+\eta\bff\chi_i\times\bff B}
	{\curl\bff\psi_1}
	\inpro{
		\sigma\bff\chi_i+\eta\bff\chi_i\times\bff B}
	{\curl\bff\psi_2}
	\ds
	\]
	are martingales. The necessary uniform integrability follows from
	\eqref{equ:uniform residual moments}. By uniqueness of predictable
	covariations of continuous square-integrable martingales,
	\eqref{equ:lim qv u} and \eqref{equ:lim qv B} follow.
	
	Finally, independence of the two Brownian families and predictability of the corresponding coefficients imply that
	$M_{u,h}^{\bff\phi} M_{B,h}^{\bff\psi}$ is a discrete martingale on the
	original probability space. Hence, with the notation of Step~2,
	\[
	\bb E\left[
	\left(
	M_{u,h}^{\bff\phi}(t)M_{B,h}^{\bff\psi}(t)
	-
	M_{u,h}^{\bff\phi}(s)M_{B,h}^{\bff\psi}(s)
	\right)
	\prod_{j=1}^{\ell}
	w_j\left(
	\bff u_{h,\tau}(s_{j,\tau}),
	\bff B_{h,\tau}(s_{j,\tau})
	\right)
	\right]
	=0.
	\]
	By equality of joint laws, the analogous identity holds for the
	tilded variables. Passing to the limit as in Step~2 shows that
	$M_u^{\bff\phi}M_B^{\bff\psi}$ is a martingale. Therefore,
	by uniqueness of the predictable covariation of continuous
	square-integrable martingales, \eqref{equ:lim cross qv zero} follows.
	This completes the proof.
\end{proof}

We are now in a position to state our main theorem.

\begin{theorem}[Convergence to a martingale solution]
	\label{the:conv martingale solution}
	Assume the hypotheses of Proposition~\ref{prop:stoch stability revised},
	with $\delta_\chi$ in \eqref{equ:hall noise small} chosen so that
	\eqref{equ:stoch stability revised main} holds. Assume further that the
	discrete initial data satisfy
	\[
	\bff u_h^0\to\bff u_0
	\quad\text{strongly in }\bb L^2(\mathscr D),
	\qquad
	\bff B_h^0\to\bff B_0
	\quad\text{strongly in }\bb L^2(\mathscr D),
	\]
	with
	\[
	\divg\bff B_h^0=0,
	\qquad
	\sup_h\norm{\bff J_h^0}{\bb L^2}<\infty .
	\]
	Then, for every sequence $(h_k,\tau_k)\to(0,0)$, there exist a
	subsequence, not relabelled, a stochastic basis $\bigl(\widetilde\Omega,\widetilde{\mathscr F}, \{\widetilde{\mathscr F}_t\}_{t\in[0,T]}, \widetilde{\bb P}\bigr)$,
	two independent families of real Brownian motions
	$\{\widetilde\beta_i^u\}_{i=1}^{\infty}$ and
	$\{\widetilde\beta_i^B\}_{i=1}^{\infty}$, and progressively measurable
	processes $(\bff u,\bff B,\bff J)$ such that
	$(\bff u,\bff B,\bff J)$ is a weak martingale solution of
	\eqref{equ:stoch mhd} in the sense of Definition~\ref{def:weak sol}.
	
	Moreover, the discrete variables along this subsequence admit copies on
	the new probability space, with the same joint laws as the original
	ones, which converge $\widetilde{\bb P}$-a.s. to the corresponding
	limiting variables in the product topology of
	Lemma~\ref{lem:conv sk representation}.
\end{theorem}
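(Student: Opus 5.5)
The proof assembles the preceding lemmas and then adds the one missing ingredient: representing the limiting martingales as stochastic integrals against Brownian motions. Fix a sequence $(h_k,\tau_k)\to(0,0)$. By Lemma~\ref{lem:conv joint tightness} and Lemma~\ref{lem:conv sk representation} we pass to a subsequence and a new probability space $(\widetilde\Omega,\widetilde{\mathscr F},\widetilde{\bb P})$. On this space the tilded copies converge $\widetilde{\bb P}$-a.s. to $(\bff u,\bff B,\bff J)$ in the product topology of $\mathcal X$; this already gives the second assertion of the theorem. The regularity requirements of Definition~\ref{def:weak sol} are obtained as follows.
\begin{itemize}
\item Lower semicontinuity of norms under weak and weak-star convergence, combined with Fatou's lemma and the uniform bounds \eqref{equ:conv uniform bounds} transported by equality of laws, gives $\bff u\in L^2(\widetilde\Omega;L^\infty_T(\bb L^2)\cap L^2_T(\bb H^1_0))$, $\bff B\in L^2(\widetilde\Omega;L^\infty_T(\bb L^2))$ and $\bff J\in L^2(\widetilde\Omega;L^2_T(\bb L^2))$.
\item Lemma~\ref{lem:conv constraints current} supplies the constraints, the identification of the current, and $\bff B\in L^2_T(\bb H^1)$.
\item Weak continuity $\bff u,\bff B\in C([0,T];\bb L^2_{\mathrm w})$ follows from continuity in $(\bb V^m_\sigma)'$ and $\widetilde{\bb H}^{-m}$ (by \eqref{equ:conv sk path}--\eqref{equ:conv Bk path}), the $L^\infty_T(\bb L^2)$ bound, and the density of the smooth test spaces in $\bb L^2$ (a Strauss-type lemma).
\item Progressive measurability holds with respect to the filtration $\widetilde{\mathscr F}_t$ of Lemma~\ref{lem:conv martingale identification}, because the paths are weakly continuous and adapted, and $\bff J$ is determined by $\bff B$ through \eqref{equ:conv current identified}.
\end{itemize}

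Next we construct the Brownian motions. By Lemma~\ref{lem:conv martingale identification}, for each admissible $\bff\phi,\bff\psi$ the processes $M_u^{\bff\phi}$ and $M_B^{\bff\psi}$ are continuous square-integrable martingales. Their quadratic (co)variations are \eqref{equ:lim qv u}--\eqref{equ:lim qv B}, and their cross variation vanishes by \eqref{equ:lim cross qv zero}. Choose countable families of test functions dense in $\bb V_\sigma$ and in $\bb V_B$. This yields a cylindrical martingale with values in the dual of a Hilbert space, whose covariance operator is $G(s)G(s)^*$, where $G(s)e_i=(\bff g_i(\bff u,\bff B),\,-\curl^*(\sigma\bff\chi_i+\eta\bff\chi_i\times\bff B))$. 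We then apply the martingale representation theorem (Da Prato--Zabczyk, Thm.~8.2), after enlarging the probability space by a product with an auxiliary space carrying independent Brownian motions if $G$ is degenerate. This produces a filtration $\{\widetilde{\mathscr F}_t\}$ and cylindrical Wiener processes, that is, families $\{\widetilde\beta_i^u\}$ and $\{\widetilde\beta_i^B\}$, such that
\[
M_u^{\bff\phi}(t)=\sum_i\int_0^t\inpro{\bff g_i(\bff u,\bff B)}{\bff\phi}\,\mathrm d\widetilde\beta_i^u,\qquad M_B^{\bff\psi}(t)=-\sum_i\int_0^t\inpro{\sigma\bff\chi_i+\eta\bff\chi_i\times\bff B}{\curl\bff\psi}\,\mathrm d\widetilde\beta_i^B .
\]
Because the covariance is block-diagonal (the cross variation vanishes), the representation can be carried out separately for the two blocks. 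The two families are then obtained from orthogonal blocks of a single cylindrical Wiener process, so they are independent. An alternative is the more elementary route of passing the Brownian motions themselves through the Skorokhod step, by adding them to $\mathcal Z$. Then $M_u^{\bff\phi}-\sum_i\int\cdots\mathrm d\widetilde\beta_i^u$ is shown to have zero quadratic variation, using the cross-variations $\langle M_u^{\bff\phi},\widetilde\beta_i^u\rangle$ identified in the same way as in Step~3 of Lemma~\ref{lem:conv martingale identification}. I would actually prefer this second route, since independence of the two families is then inherited directly from the joint laws.

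Finally we extend to the full test spaces. The identities \eqref{equ:mart u} and \eqref{equ:mart B} are now known for $\bff\phi\in C_0^\infty\cap\bb V_\sigma$ and $\bff\psi\in C^\infty(\overline{\mathscr D})\cap\hzerodiv$, for every $t$, $\widetilde{\bb P}$-a.s. We extend them by density to $\bb V_\sigma$ (in the $\bb H^1$ norm) and to $\bb V_B$ (in the $\bb W^{1,3}$ norm). Each term is continuous in the test function: for example, $\inpro{\bff J\times\bff B}{\curl\bff\psi}$ is bounded by $\norm{\bff J}{\bb L^2}\norm{\bff B}{\bb L^6}\norm{\curl\bff\psi}{\bb L^3}$, with $\bff B\in L^2_T(\bb L^6)$. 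The stochastic integrals pass to the limit by the Itô isometry, using \eqref{equ:g growth} and \eqref{equ:chi assumption}. Exceptional null sets are handled by working with a countable dense subset and using continuity in $t$.

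I expect the main obstacle to be the construction of the independent Brownian families so that they are compatible with the filtration. With the second route, this amounts to showing that the tilded Brownian increments remain independent of the past of the tilded numerical history. That independence must then survive the limit, which again requires testing against bounded continuous cylindrical functionals, exactly as in Step~2 of Lemma~\ref{lem:conv martingale identification}.
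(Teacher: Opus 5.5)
Your proposal is correct, and its main line is the paper's proof. The shared steps are: the Skorokhod copies from Lemma~\ref{lem:conv sk representation}; regularity by Fatou and equality of laws; the constraints and $\bff J=\curl\bff B$ from Lemma~\ref{lem:conv constraints current}; the limiting martingales and their (co)variations from Lemma~\ref{lem:conv martingale identification}; a representation theorem for cylindrical martingales with possible enlargement of the basis; and density extension to $\bb V_\sigma$ and $\bb V_B$. For the representation step the paper cites Ondrej\'at's Brownian representation theorem. Your Da Prato--Zabczyk version, with independence coming from $\langle M_u^{\bff\phi},M_B^{\bff\psi}\rangle=0$, plays exactly the same role.

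Your preferred alternative is to carry $\{\beta_i^u\}$ and $\{\beta_i^B\}$ through the Skorokhod step. This is also sound. It avoids enlarging the basis, and independence of the two families is inherited from the joint laws. It is, however, more work than the paper's route, which only needs the covariations already established. You would have to:
\begin{itemize}
\item add the Brownian paths to $\mathcal Z$ (trivially tight, since their law is fixed);
\item redo Step~2 of Lemma~\ref{lem:conv martingale identification} for the larger filtration generated by $(\bff u,\bff B,\widetilde\beta^u,\widetilde\beta^B)$, with the test functionals evaluated at $\widetilde\beta(s_{j,\tau})$ rather than $\widetilde\beta(s_j)$, because the first discrete increment contributing to $M_{u,h}^{\bff\phi}(t)-M_{u,h}^{\bff\phi}(s)$ runs over $(s_\tau,t_n]$ and straddles $s$;
\item identify and pass to the limit in the discrete cross-variations $\langle M_{u,h}^{\bff\phi},\beta_i^u\rangle$ and $\langle M_{B,h}^{\bff\psi},\beta_i^B\rangle$, the latter again using the $\bb L^2$-consistency of $\mathcal R_h$.
\end{itemize}

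One small correction. For $\bff u$, the Strauss-type weak-continuity argument must be run in $\bb L^2_\sigma$, not in $\bb L^2$. The map $\bb L^2\to(\bb V^m_\sigma)'$ annihilates gradients, so it is not injective on $\bb L^2$. The space $\bb L^2_\sigma$ works because $\bb V^m_\sigma$ is dense in it and $\bff u(t)\in\bb L^2_\sigma$ for a.e.\ $t$.
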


\begin{proof}
	Let $(h_k,\tau_k)\to(0,0)$ be arbitrary. By
	Lemma~\ref{lem:conv sk representation}, after passing to a subsequence,
	there exist copies of the discrete variables on a new probability space
	having the same joint laws as the original ones and converging
	$\widetilde{\bb P}$-a.s. in the product topology specified there.
	
	By equality of laws, Fatou's lemma, and
	Proposition~\ref{prop:stoch stability revised}, the limiting processes
	satisfy
	\[
	\bff u\in L^2\left(\widetilde\Omega; L^\infty_T(\bb L^2)\cap L^2_T(\bb H_0^1) \right),
	\qquad
	\bff B\in L^2\left( \widetilde\Omega; L^\infty_T(\bb L^2) \right),
	\qquad
	\bff J\in L^2\left( \widetilde\Omega;L^2_T(\bb L^2) \right).
	\]
	The convergence in the negative path spaces and the convergence of the
	discrete initial data imply $\bff u(0)=\bff u_0$ and $\bff B(0)=\bff B_0$.
	Moreover, the continuity in the negative spaces together with the
	$L^\infty_T(\bb L^2)$ bounds yields
	\[
	\bff u,\bff B\in C([0,T];\bb L^2_{\mathrm w}),
	\qquad
	\widetilde{\bb P}\text{-a.s.}
	\]
	Lemma~\ref{lem:conv constraints current} identifies the divergence and
	boundary constraints and gives
	\[
	\bff J=\curl\bff B,
	\qquad
	\bff B\in
	L^2\left(
	\widetilde\Omega;
	L^2_T(\bb H^1)
	\right),
	\]
	in the corresponding weak sense. Lemma~\ref{lem:conv deterministic terms}
	identifies all deterministic terms in the limiting velocity and magnetic
	equations.
	
	It remains to identify the stochastic terms. By
	Lemma~\ref{lem:conv martingale identification}, the residuals
	$M_u^{\bff\phi}$ and $M_B^{\bff\psi}$ form a joint continuous
	square-integrable cylindrical martingale whose predictable covariations
	are given by \eqref{equ:lim qv u}--\eqref{equ:lim cross qv zero}. The covariance factorises through the coefficient maps $\bff\phi \mapsto \left\{\inpro{\bff g_i(\bff u,\bff B)}{\bff\phi}\right\}_{i\ge1}$,
	and $\bff\psi \mapsto \left\{-\inpro{\sigma\bff\chi_i+\eta\bff\chi_i\times\bff B}{\curl\bff\psi} \right\}_{i\ge1}$.
	Hence, by the Brownian representation theorem for cylindrical
	martingales~\cite{Ond05}, possibly after enlarging
	the stochastic basis, there exist two independent families of standard
	real Brownian motions $\{\widetilde\beta_i^u\}_{i\ge1}$ and $\{\widetilde\beta_i^B\}_{i\ge1}$ such that
	\begin{align}
		M_u^{\bff\phi}(t)
		&=
		\sum_{i=1}^{\infty}
		\int_0^t \inpro{\bff g_i(\bff u,\bff B)}{\bff\phi}
		\mathrm d\widetilde\beta_i^u(s),
		\label{equ:represented mart u}
		\\
		M_B^{\bff\psi}(t)
		&=
		-\sum_{i=1}^{\infty}
		\int_0^t \inpro{\sigma\bff\chi_i+\eta\bff\chi_i\times\bff B}
		{\curl\bff\psi} \mathrm d\widetilde\beta_i^B(s),
		\label{equ:represented mart B}
	\end{align}
	for every smooth admissible test function.
	
	Substituting the definitions of the residuals yields the two variational
	identities in Definition~\ref{def:weak sol} for $\bff\phi\in
	\mathcal D_\sigma$ and $\bff\psi \in\mathcal D_B$. By density and continuity of the deterministic and stochastic terms,
	the velocity identity extends to every $\bff\phi\in\bb V_\sigma$.
	Likewise, by the definition of $\bb{V}_B$ in \eqref{equ:V B}, the magnetic identity extends by continuity to every $\bff\psi\in\bb V_B$. Indeed,
	\[
	\bff J\times\bff B,\,
	\bff u\times\bff B
	\in L^1_T(\bb L^{3/2}),
	\]
	and the remaining deterministic terms are continuous with respect to
	strong convergence in $\bb W^{1,3}$. Continuity of the stochastic terms
	follows from the noise-growth assumptions, the summability assumptions on
	$\{\bff\chi_i\}_{i\ge1}$, and the It\^o isometry.
	
	The limiting identities hold for every $t\in[0,T]$ by continuity.
	With respect to the enlarged filtration, $\bff u$ and $\bff B$ remain progressively measurable, and the relation $\bff J=\curl\bff B$ allows
	us to choose a progressively measurable version of $\bff J$.
	Thus all requirements of Definition~\ref{def:weak sol} are satisfied, and $(\bff u,\bff B,\bff J)$ is a weak martingale solution of~\eqref{equ:stoch mhd}. This completes the proof of the theorem.
\end{proof}

\section{Numerical simulations}
\label{sec:numerical experiments}

We illustrate the qualitative behaviour and convergence of the proposed scheme on the unit cube $\mathscr D=(0,1)^3$ by running a numerical experiment showing magnetic islands reconnection.
The dimensionless parameters are chosen as
\[
	\nu=0.004,
	\qquad
	\sigma=0.008,
	\qquad
	\eta=0.15,
\]
and no deterministic body force is applied. We set $\bff u_0 = \bff0$ and
define the initial magnetic field from the vector potential
\[
\bff A_0(x,y,z)
=
\left(0,\, 0,\,
\frac14 \sin(\pi x)\sin(\pi y)\sin(\pi z)
\log \bigl(\cosh \bigl(4 (y- 0.5) \bigr)\bigr) \right)^\top,
\]
then set $B_0=\curl\bff A_0$.
The stochastic momentum coefficient and the current-fluctuation mode are chosen as
\[
\bff g_1(\bff u,\bff B)=0.08\bff u,
\qquad
\bff\chi_1(x,y,z)
=
0.025
\left(
0,\,
0,\,
\sin(\pi x)\sin(\pi y)\sin(\pi z)
\right)^\top,
\]
with all remaining modes set to zero. The momentum and magnetic equations are driven by independent Brownian motions.

\subsection{Structure preservation}

Figure~\ref{fig:sample path magnetic} shows selected snapshots from one realisation. The displayed quantity is the magnitude of the magnetic field on the slice (central plane) $z=\frac12$.

\begin{figure}[!htb]
	\centering
	\begin{subfigure}[t]{0.3\textwidth}
		\centering
		\includegraphics[width=\textwidth]{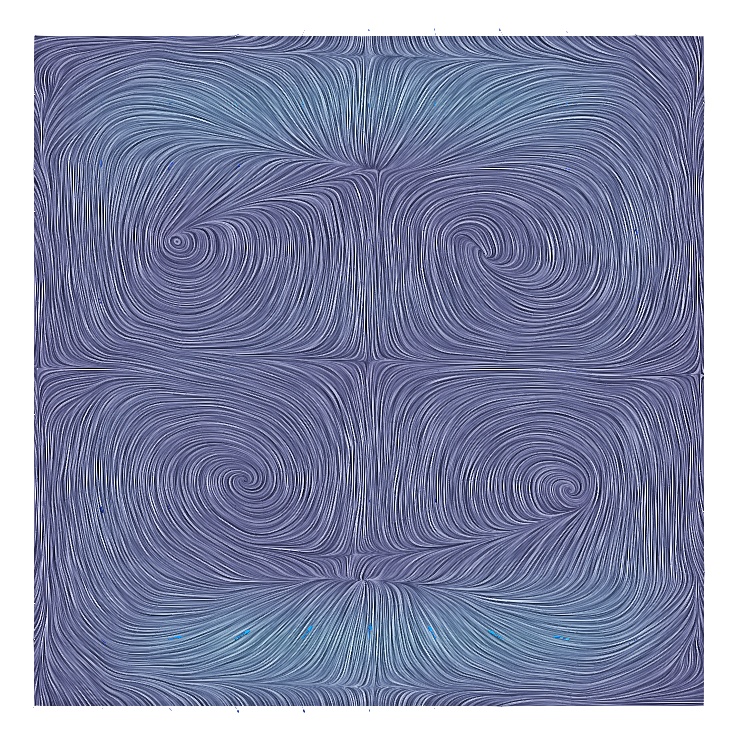}
		\caption{$t=0.025$}
	\end{subfigure}
	\begin{subfigure}[t]{0.3\textwidth}
		\centering
		\includegraphics[width=\textwidth]{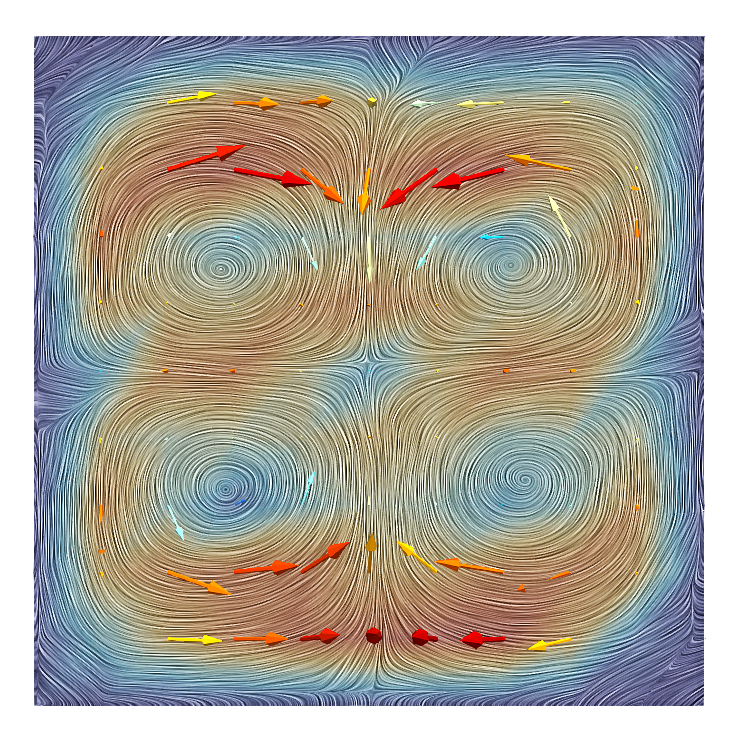}
		\caption{$t=0.50$}
	\end{subfigure}
	\begin{subfigure}[t]{0.3\textwidth}
		\centering
		\includegraphics[width=\textwidth]{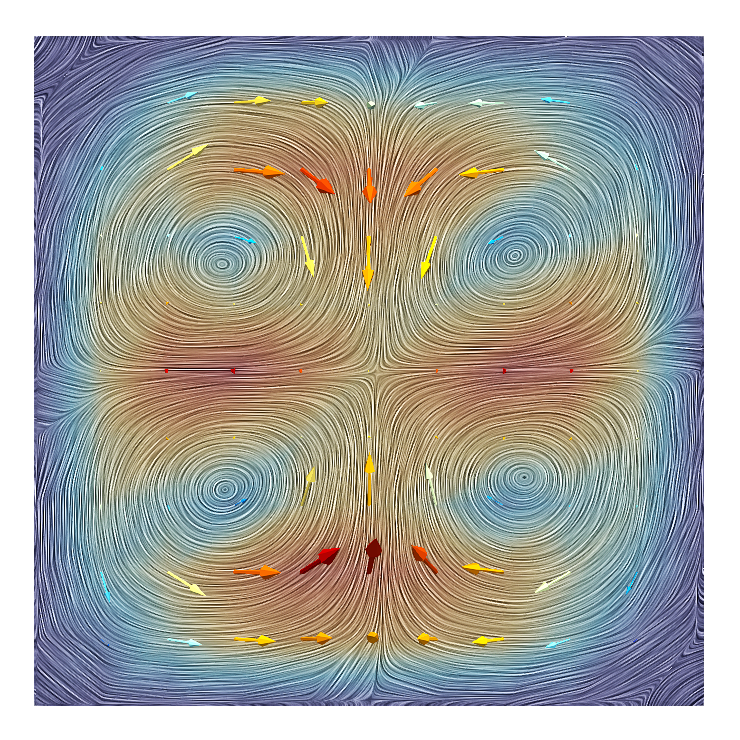}
		\caption{$t=1.50$}
	\end{subfigure}
	\begin{subfigure}[t]{0.07\textwidth}
		\centering
		\includegraphics[width=\textwidth]{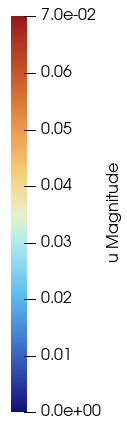}
	\end{subfigure}
	\caption{Snapshots of $\bff u_h^n$ on the plane $z=\frac12$ for one sample path. The colour indicates $\abs{\bff{u}_h^n}$. The vectors point in the direction of $\bff{u}_h^n$.}
	\label{fig:sample path velocity}
\end{figure}

\begin{figure}[!htb]
	\centering
	\begin{subfigure}[t]{0.3\textwidth}
		\centering
		\includegraphics[width=\textwidth]{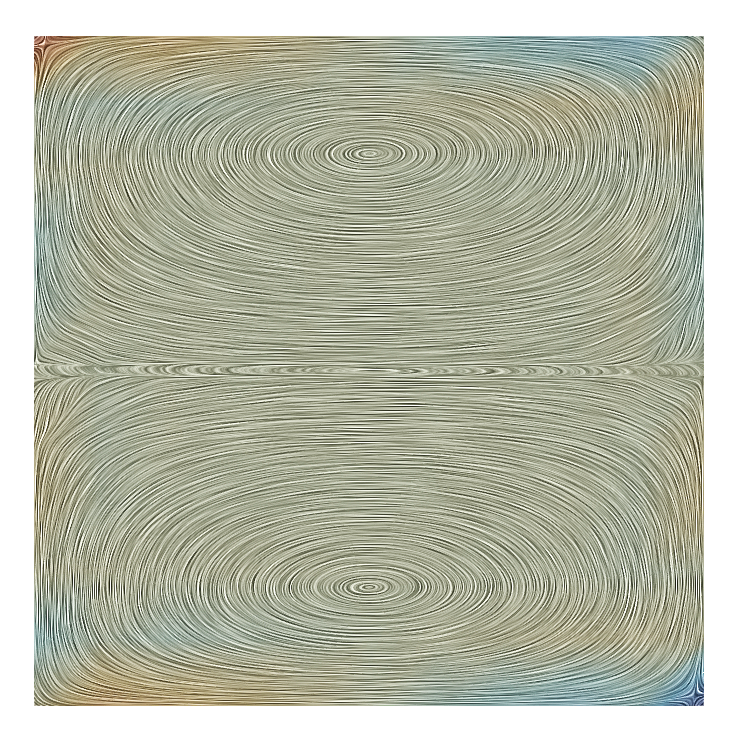}
		\caption{$t=0$}
	\end{subfigure}
	\begin{subfigure}[t]{0.3\textwidth}
		\centering
		\includegraphics[width=\textwidth]{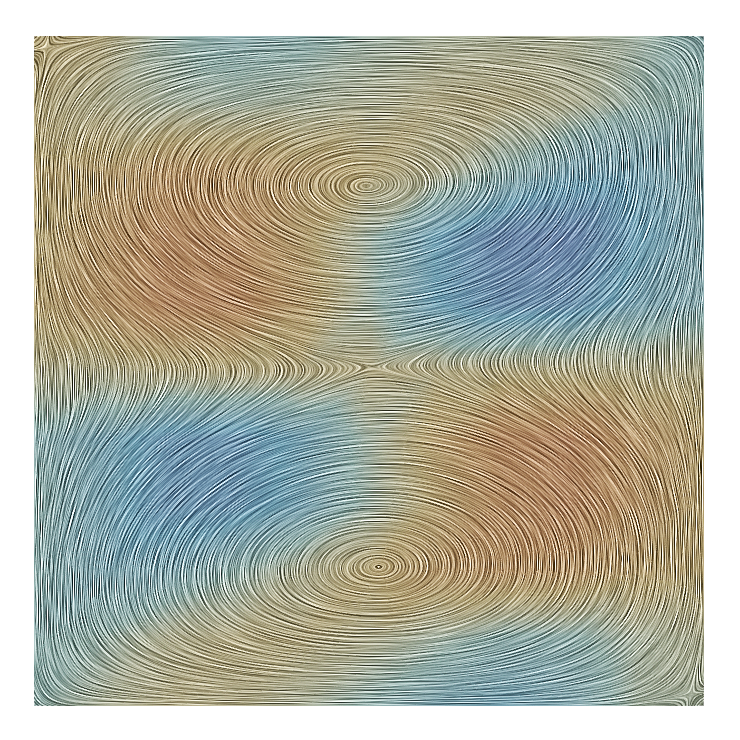}
		\caption{$t=0.50$}
	\end{subfigure}
	\begin{subfigure}[t]{0.3\textwidth}
		\centering
		\includegraphics[width=\textwidth]{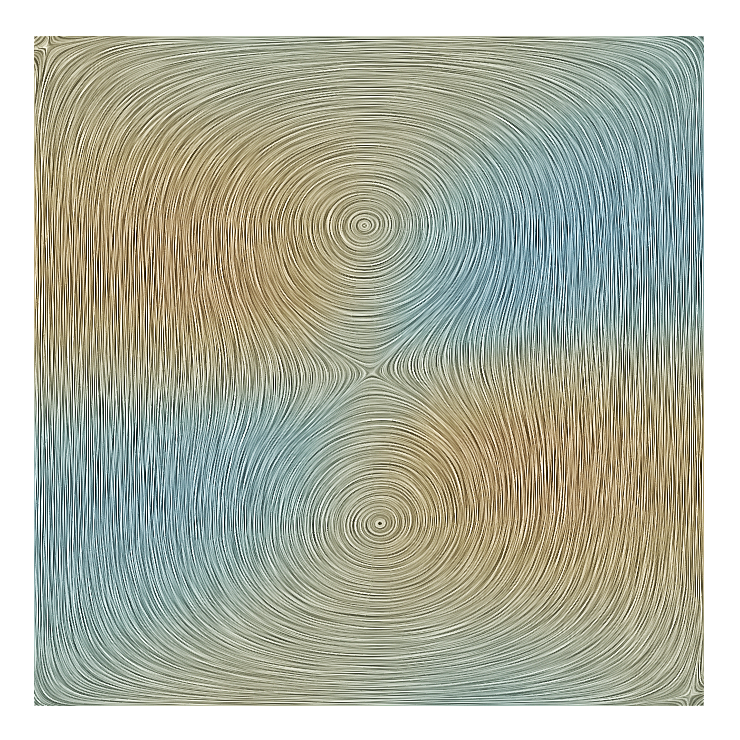}
		\caption{$t=1.50$}
	\end{subfigure}
	\begin{subfigure}[t]{0.07\textwidth}
		\centering
		\includegraphics[width=\textwidth]{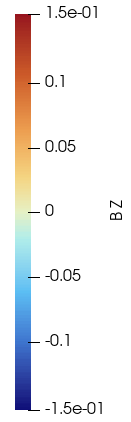}
	\end{subfigure}
	\caption{Snapshots of $\bff B_h^n$ on the plane $z=\frac12$ for one sample path. The colour indicates the $z$-component of $\bff{B}_h^n$, which shows a quadrupolar pattern.}
	\label{fig:sample path magnetic}
\end{figure}

We monitor the discrete total energy
\[
\mathcal E_h^n
:=
\frac12
\left(
\norm{\bff u_h^n}{\bb L^2}^2
+
\norm{\bff B_h^n}{\bb L^2}^2
\right)
\]
and the cellwise magnetic-divergence defect
\[
D_h^n
:=
\max_{K\in\mathcal T_h}
\Big|
\left.\divg\bff B_h^n\right|_K
\Big|.
\]
Figure~\ref{fig:energy divergence ensemble} displays the individual sample paths together with their empirical means. For the chosen noise amplitudes, the dissipative trend remains dominant in the energy evolution. The magnetic-divergence defect remains close to machine precision for every sample path, illustrating the exact preservation of the magnetic Gauss law, up to the solver tolerance.

\begin{figure}[htbp]
	\centering
	\begin{subfigure}[t]{0.49\textwidth}
		\centering
		\includegraphics[width=\textwidth]{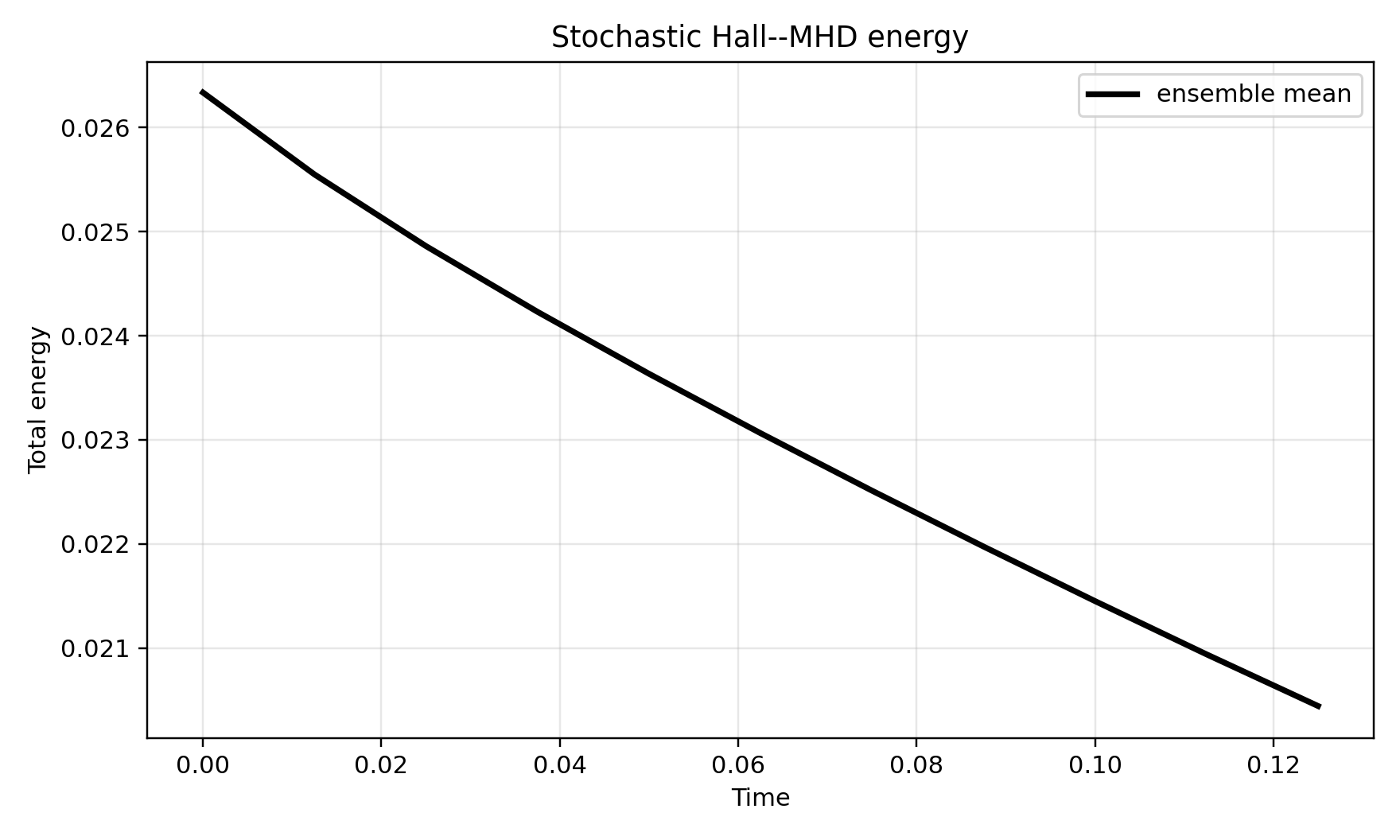}
		\caption{Discrete total energy.}
	\end{subfigure}
	\begin{subfigure}[t]{0.49\textwidth}
		\centering
		\includegraphics[width=\textwidth]{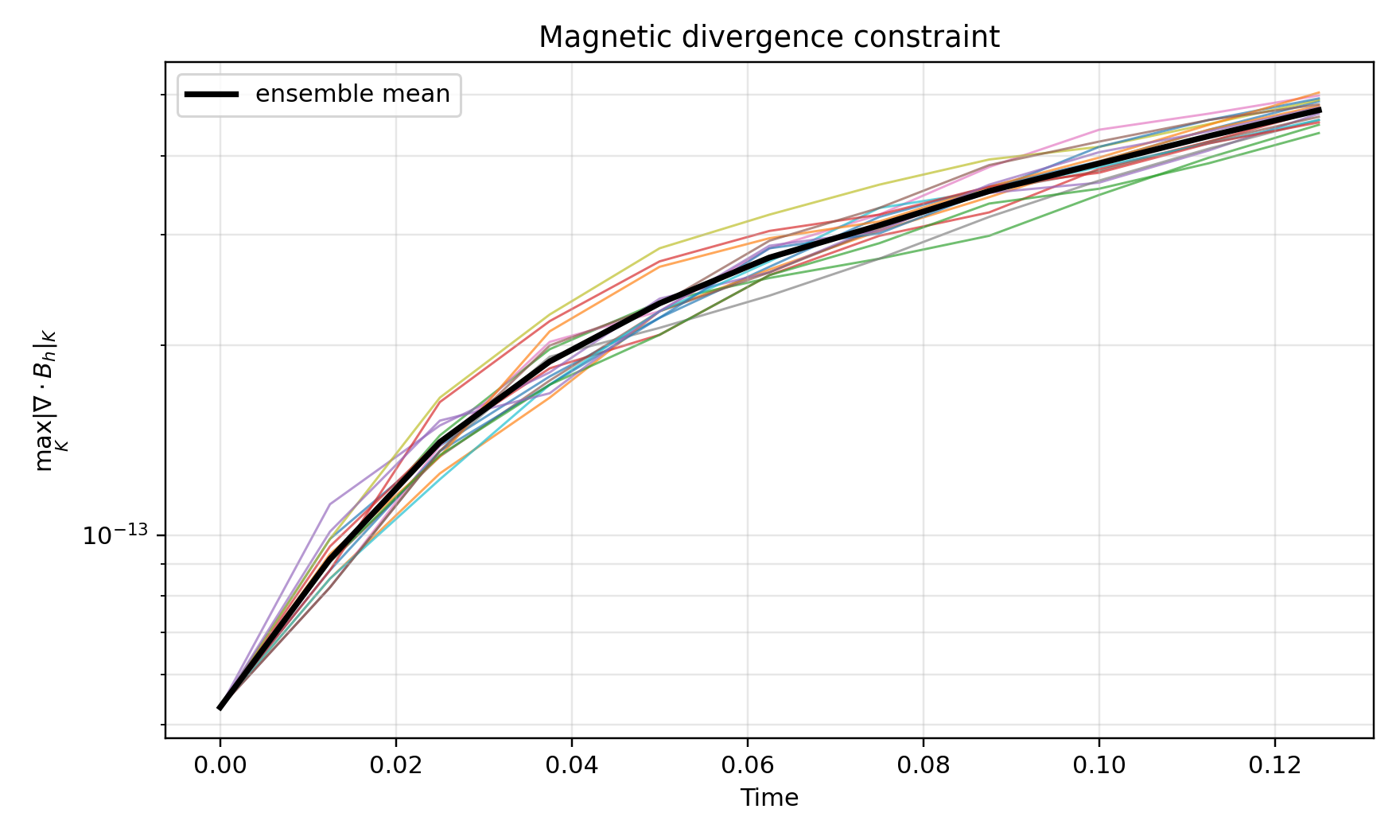}
		\caption{Maximum cellwise magnetic divergence.}
	\end{subfigure}
	\caption{Energy and divergence diagnostics for 20 sample paths. The thicker curve denotes the empirical mean.}
	\label{fig:energy divergence ensemble}
\end{figure}

\subsection{Spatial and temporal convergence}

Since an exact solution is unavailable, we assess convergence by comparison with finer numerical reference solutions. All discretisation levels corresponding to a given sample are driven by the same Brownian paths. For the temporal study, each coarse Brownian increment is obtained by summing the corresponding fine-grid increments.

For $\bff v\in\{\bff u,\bff B,\bff J\}$ and each tested mesh size $h_\ell$,
we compute the root-mean-square final-time error
\[
e_{\bff v,h_\ell}
:=
\left(
\frac1M
\sum_{r=1}^{M}
\norm{
	\bff v_{h_\ell}^{(r)}(T)
	-
	\bff v_{h_{\mathrm{ref}}}^{(r)}(T)
}{\bb L^2}^2
\right)^{1/2},
\]
where $h_{\mathrm{ref}}<h_\ell$ is a fixed reference mesh size. The temporal
errors $e_{\bff v,\tau_\ell}$ are defined analogously using a fixed reference
time step $\tau_{\mathrm{ref}}$ on a fixed spatial mesh. In both studies,
all discretisation levels corresponding to the same sample path are driven
by the same Brownian realisation. We use $M=20$ coupled sample paths and $T=0.125$.

The errors are reported under successive refinement of $h_\ell$ or
$\tau_\ell$. The corresponding observed rates are computed from consecutive
reference errors, e.g.,
\[
\operatorname{rate}_{\bff v,h_\ell}
=
\frac{
	\log(e_{\bff v,h_{\ell-1}}/e_{\bff v,h_\ell})
}{
	\log(h_{\ell-1}/h_\ell)
},
\]
and analogously for the temporal refinement.

For the spatial experiment, we use $N=2,4,6,8$ uniform subdivisions in
each coordinate direction, a finer reference mesh with
$N_{\mathrm{ref}}=16$, and the fixed time step
$\tau=6.25\times10^{-3}$. The resulting root-mean-square errors and
empirical rates are reported in Table~\ref{tab:spatial convergence L2}.

\begin{table}[htbp]
	\centering
	\small
	\caption{Spatial root-mean-square final-time errors and empirical
		convergence rates.}
	\label{tab:spatial convergence L2}
	\begin{tabular}{c c c c c c c c}
		\hline
		$N$
		&
		$h_{\max}$
		&
		$e_{\bff u,h}$
		&
		rate $(\bff u)$
		&
		$e_{\bff B,h}$
		&
		rate $(\bff B)$
		&
		$e_{\bff J,h}$
		&
		rate $(\bff J)$
		\\
		\hline
		$2$
		&
		$8.6603\times10^{-1}$
		&
		$1.4737\times10^{-2}$
		&
		$-$
		&
		$2.0259\times10^{-1}$
		&
		$-$
		&
		$2.1300$
		&
		$-$
		\\
		$4$
		&
		$4.3301\times10^{-1}$
		&
		$1.3728\times10^{-2}$
		&
		$0.10$
		&
		$1.3048\times10^{-1}$
		&
		$0.63$
		&
		$1.7127$
		&
		$0.31$
		\\
		$6$
		&
		$2.8868\times10^{-1}$
		&
		$9.0768\times10^{-3}$
		&
		$1.02$
		&
		$8.9239\times10^{-2}$
		&
		$0.94$
		&
		$1.3859$
		&
		$0.52$
		\\
		$8$
		&
		$2.1651\times10^{-1}$
		&
		$7.9384\times10^{-3}$
		&
		$0.47$
		&
		$6.1541\times10^{-2}$
		&
		$1.29$
		&
		$1.2795$
		&
		$0.28$
		\\
		\hline
	\end{tabular}
\end{table}

For the temporal experiment, the mesh is fixed at $N=8$, and the time
steps
\[
\tau
\in
\left\{
2.5\times10^{-2},
1.25\times10^{-2},
6.25\times10^{-3}
\right\}
\]
are compared with the reference step
$\tau_{\mathrm{ref}}=3.125\times10^{-3}$. The corresponding errors and
empirical rates are shown in Table~\ref{tab:temporal convergence L2}.

\begin{table}[htbp]
	\centering
	\small
	\caption{Temporal root-mean-square final-time errors and empirical
		convergence rates.}
	\label{tab:temporal convergence L2}
	\begin{tabular}{c c c c c c c}
		\hline
		$\tau$
		&
		$e_{\bff u,\tau}$
		&
		rate $(\bff u)$
		&
		$e_{\bff B,\tau}$
		&
		rate $(\bff B)$
		&
		$e_{\bff J,\tau}$
		&
		rate $(\bff J)$
		\\
		\hline
		$2.5\times10^{-2}$
		&
		$1.0483\times10^{-3}$
		&
		$-$
		&
		$7.2402\times10^{-3}$
		&
		$-$
		&
		$3.2303\times10^{-1}$
		&
		$-$
		\\
		$1.25\times10^{-2}$
		&
		$4.8349\times10^{-4}$
		&
		$1.12$
		&
		$3.6803\times10^{-3}$
		&
		$0.98$
		&
		$1.8298\times10^{-1}$
		&
		$0.82$
		\\
		$6.25\times10^{-3}$
		&
		$1.6810\times10^{-4}$
		&
		$1.52$
		&
		$1.3758\times10^{-3}$
		&
		$1.42$
		&
		$7.2850\times10^{-2}$
		&
		$1.33$
		\\
		\hline
	\end{tabular}
\end{table}

The spatial errors decrease overall, although the rates appear less uniform on these coarse meshes. 
The temporal errors also decrease monotonically. These results are intended only as self-convergence indicators, since the reference solutions are numerical and no convergence-rate result is proved here.


\appendix

\section{Auxiliary inequalities}\label{subsec:aux-estimates}

\begin{lemma}[The discrete Jensen inequality]\label{lem:weighted-discrete-holder}
	Let \(q\ge1\) and $k>0$. let \(\{a_j\}_{j=0}^{J-1}\) be a non-negative sequence and set $T=Jk$. Then
	\begin{equation}\label{eq:weighted-discrete-holder}
		\left(
		\sum_{j=0}^{J-1} k a_j
		\right)^q
		\le
		T^{q-1}
		\sum_{j=0}^{J-1} k a_j^q.
	\end{equation}
	The sign in \eqref{eq:weighted-discrete-holder} is reversed if $q\in (0,1)$.
\end{lemma}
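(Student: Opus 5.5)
The natural route is the convexity of $x\mapsto x^q$ on $[0,\infty)$ for $q\ge1$, applied to the uniform probability measure on the index set $\{0,\dots,J-1\}$. Set $w_j:=k/T=1/J$, so that $\sum_{j=0}^{J-1}w_j=1$ and $\sum_{j} k a_j = T\sum_j w_j a_j$.

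Jensen's inequality for the convex function $\varphi(x)=x^q$ gives
\[
\Bigl(\sum_{j=0}^{J-1} w_j a_j\Bigr)^q \le \sum_{j=0}^{J-1} w_j a_j^q .
\]
Multiplying by $T^q$ and using $T^q w_j = T^{q-1}k$ yields exactly \eqref{eq:weighted-discrete-holder}:
\[
\Bigl(\sum_{j=0}^{J-1} k a_j\Bigr)^q = T^q\Bigl(\sum_j w_j a_j\Bigr)^q \le T^{q}\sum_j w_j a_j^q = T^{q-1}\sum_j k a_j^q .
\]
Alternatively, for $q>1$ one can use H\"older's inequality with exponents $q$ and $q'=q/(q-1)$ on $\sum_j k^{1/q'}\cdot k^{1/q}a_j$, which gives $(\sum_j k)^{(q-1)}\sum_j k a_j^q=T^{q-1}\sum_j k a_j^q$ after raising to the power $q$. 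The case $q=1$ is an identity.

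For $q\in(0,1)$, the function $x\mapsto x^q$ is concave on $[0,\infty)$, so the same computation with Jensen's inequality for concave functions reverses the inequality. There is no real obstacle. The only points to check are that the weights $w_j$ are nonnegative and sum to one, which is exactly the relation $T=Jk$, and that the nonnegativity of the $a_j$ keeps $x^q$ well defined.
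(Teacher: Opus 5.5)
Your proof is correct and complete. Jensen's inequality with the uniform weights $w_j=k/T=1/J$, followed by multiplication by $T^q$, gives the case $q\ge1$, and concavity of $x\mapsto x^q$ gives the reversed inequality for $q\in(0,1)$. The paper states this lemma without proof as a standard fact, and your argument (or the H\"older variant you mention for $q>1$) is exactly the standard justification.
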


\begin{lemma}[The discrete Burkholder--Davis--Gundy (BDG) inequality]\label{lem:discrete-BDG}
	Let $p\ge 1$, let $H$ be a Hilbert space, and let
	$\{\bff Z_j\}_{j=0}^{J-1}$ be an $H$-valued martingale difference sequence
	with respect to a filtration \(\{\mathcal F_j\}_{j=0}^{J}\), that is,
	\(\bff Z_j\) is \(\mathcal F_{j+1}\)-measurable and
	\[
	\mathbb E[\bff Z_j\mid\mathcal F_j]=\bff0,
	\qquad
	j=0,\ldots,J-1.
	\]
	Assume that \(\bff Z_j\in L^p(\Omega;H)\) for each \(j\). Then there exists
	a constant \(C_p>0\), depending only on \(p\), such that
	\begin{equation}\label{eq:discrete-BDG-square-function}
		\bb E\left[
		\max_{0\le r\le J}
		\norm{\sum_{j=0}^{r-1}\bff Z_j}{H}^{p}
		\right]
		\le
		C_p\,
		\bb E\left[
		\left(
		\sum_{j=0}^{J-1}\norm{\bff Z_j}{H}^{2}
		\right)^{p/2}
		\right].
	\end{equation}
\end{lemma}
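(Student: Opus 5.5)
The plan is to reduce the statement to the classical Burkholder--Davis--Gundy inequality for Hilbert-space-valued martingales. Set $\bff S_0:=\bff 0$ and $\bff S_r:=\sum_{j=0}^{r-1}\bff Z_j$ for $r=1,\ldots,J$. By the martingale-difference hypothesis, $(\bff S_r)_{r=0}^J$ is an $H$-valued $\{\mathcal F_r\}$-martingale, and it is integrable because each $\bff Z_j\in L^p(\Omega;H)$ with $p\ge1$. Its square function is exactly
\[
[\bff S]_J=\sum_{j=0}^{J-1}\norm{\bff Z_j}{H}^2 .
\]
We may assume the right-hand side of \eqref{eq:discrete-BDG-square-function} is finite, since otherwise there is nothing to prove. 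The asserted inequality is then the upper half of Burkholder's square-function inequality for discrete martingales, and the task is to check that it holds in a Hilbert space with a constant depending only on $p$.

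\emph{Step 1: the case $p>1$.} I would invoke Burkholder's inequality $\bb E\norm{\bff S_J}{H}^p\le C_p\,\bb E[\bff S]_J^{p/2}$. In a Hilbert space it follows from the scalar martingale-transform argument: Hilbert spaces are UMD, and the Rademacher averages satisfy $\bb E_\varepsilon\norm{\sum\varepsilon_j\bff Z_j}{H}^2=\sum\norm{\bff Z_j}{H}^2$, so Kahane--Khintchine turns this identity into the needed two-sided comparison for every $p$. To pass from $\bff S_J$ to $\max_{0\le r\le J}\norm{\bff S_r}{H}$, I would apply Doob's maximal inequality to the nonnegative submartingale $\norm{\bff S_r}{H}$. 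This costs a factor $(p/(p-1))^p$, which is harmless for fixed $p>1$.

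\emph{Step 2: the case $p=1$.} This is the main obstacle, because Doob's inequality fails at $p=1$. Here I would use Davis' inequality directly for the maximal function. One route is the Davis decomposition $\bff Z_j=\bff Z_j'+\bff Z_j''$, where the large-jump part is controlled by the predictable increments of $\max_{i\le j}\norm{\bff Z_i}{H}\le [\bff S]_J^{1/2}$ and the small-jump part is handled by a good-$\lambda$ or stopping-time argument at level $2$. The argument carries over verbatim to $H$-valued martingales, since only norms and the inner-product identity $\bb E\norm{\bff S_{r+1}}{H}^2-\bb E\norm{\bff S_r}{H}^2=\bb E\norm{\bff Z_r}{H}^2$ are used.

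\emph{Alternative route for all $p\ge1$.} One can avoid the split into cases by embedding the discrete martingale into continuous time: set $\bff M_t:=\bff S_{\lfloor t\rfloor}$ with filtration $\mathcal G_t:=\mathcal F_{\lfloor t\rfloor}$. Then $\bff M$ is a càdlàg $H$-valued martingale, its quadratic variation is $[\bff M]_J=\sum_{j}\norm{\bff Z_j}{H}^2$, and $\sup_{t\le J}\norm{\bff M_t}{H}=\max_{0\le r\le J}\norm{\bff S_r}{H}$. The BDG inequality for Hilbert-space-valued càdlàg local martingales, valid for every $p\ge1$, then yields \eqref{eq:discrete-BDG-square-function} at once. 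I would present this as the main argument and cite the Hilbert-space BDG inequality, which also covers the delicate endpoint $p=1$.
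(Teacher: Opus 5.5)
Your reduction is correct and matches how the paper treats this lemma: the paper states it in the appendix without proof, as the classical Hilbert-space Burkholder--Davis--Gundy inequality, and both of your routes are legitimate ways to invoke it, namely Burkholder plus Doob for $p>1$ with Davis for $p=1$, or the piecewise-constant embedding into a c\`adl\`ag martingale with $[\bff M]_J=\sum_{j}\norm{\bff Z_j}{H}^2$. One small wording fix in your Davis sketch: it is the small-jump part that has predictable control, $\norm{\bff Z_j'}{H}\le 4\max_{i<j}\norm{\bff Z_i}{H}$, whereas the large-jump part is bounded by twice the (adapted, not predictable) increments of the running maximum, which sum to at most $2\,[\bff S]_J^{1/2}$.
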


\begin{lemma}[Asymptotic \(\bb L^2\)-consistency of $\mathcal R_h$]
	\label{lem:Maxwell reconstruction L2 consistency}
	Let \(\mathcal R_h:\bb{RT}_h^0\to\bb W_h^0\) be defined by
	\eqref{equ:Maxwell reconstruction scheme}. Then there exists a function $\varepsilon_{\mathcal R}:(0,1]\to[0,\infty)$ satisfying
	\[
	\varepsilon_{\mathcal R}(h)\to0
	\quad\text{as }h\to0,
	\]
	such that, for all \(\bff C_h\in\bb{RT}_h^0\),
	\begin{align}
		\label{equ:Rh L2 consistency asymptotic}
		\norm{\mathcal R_h\bff C_h-\bff C_h}{\bb L^2}
		\le
		\varepsilon_{\mathcal R}(h)
		\left(
		\norm{\bff C_h}{\bb L^2}
		+
		\norm{\curlh\bff C_h}{\bb L^2}
		+
		\norm{\divg\bff C_h}{L^2}
		\right).
	\end{align}
\end{lemma}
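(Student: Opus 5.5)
The plan is a contradiction argument built on the consistency statement of Lemma~\ref{lem:conv reconstruction consistency}, combined with the discrete Maxwell compactness property of $\bb{RT}_h^0$ \cite{HeHuXu19}. Both sides of \eqref{equ:Rh L2 consistency asymptotic} are positively homogeneous in $\bff C_h$, so it suffices to set
\[
\varepsilon_{\mathcal R}(h)
:=
\sup\Bigl\{\norm{\mathcal R_h\bff C_h-\bff C_h}{\bb L^2}:\ \bff C_h\in\bb{RT}_h^0,\ \norm{\bff C_h}{\bb L^2}+\norm{\curlh\bff C_h}{\bb L^2}+\norm{\divg\bff C_h}{L^2}\le1\Bigr\}.
\]
This quantity is finite, and in fact bounded by a constant independent of $h$, because of \eqref{equ:Maxwell reconstruction stability}. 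The estimate then holds by definition, and the only thing left to prove is that $\varepsilon_{\mathcal R}(h)\to0$ as $h\to0$.

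Suppose this fails. Then there are $\delta>0$, a sequence $h_k\to0$, and elements $\bff C_k\in\bb{RT}_{h_k}^0$ of unit graph norm with $\norm{\mathcal R_{h_k}\bff C_k-\bff C_k}{\bb L^2}\ge\delta$. We extract convergent subsequences in three ways:
\begin{itemize}
\item the discrete Maxwell compactness gives $\bff C_k\to\bff C$ strongly in $\bb L^2$, with $\bff C\in\hcurl\cap\hzerodiv$;
\item boundedness gives $\curlh\bff C_k\rightharpoonup\bff K$ weakly in $\bb L^2$;
\item boundedness also gives $\divg\bff C_k\rightharpoonup D$ weakly in $L^2$.
\end{itemize}
Once we show $\bff K=\curl\bff C$ and $D=\divg\bff C$, Lemma~\ref{lem:conv reconstruction consistency} yields $\mathcal R_{h_k}\bff C_k\to\bff C$ strongly in $\bb L^2$. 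Hence $\mathcal R_{h_k}\bff C_k-\bff C_k\to0$, which contradicts the choice of the sequence.

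The divergence identification is routine. For $\zeta\in C_c^\infty(\mathscr D)$, the zero normal trace in $\bb{RT}_h^0$ gives $\inpro{\divg\bff C_k}{\zeta}=-\inpro{\bff C_k}{\nabla\zeta}$. Passing to the limit gives $\inpro{D}{\zeta}=-\inpro{\bff C}{\nabla\zeta}$, so $D=\divg\bff C$.

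The curl identification is the main obstacle, because $\curlh$ is defined only by testing against N\'ed\'elec functions. Take $\bff\omega\in C_c^\infty(\mathscr D;\bb R^3)$ and set $\bff\omega_k:=\mathcal I_{h_k}^{\bb X}\bff\omega\in\bb X_{h_k}^0$. The canonical interpolant converges for smooth fields and commutes with curl through \eqref{equ:comm}, so $\bff\omega_k\to\bff\omega$ and $\curl\bff\omega_k=\mathcal I_{h_k}^{\bb{RT}}\curl\bff\omega\to\curl\bff\omega$ in $\bb L^2$. By \eqref{equ:stoch curlh}, $\inpro{\curlh\bff C_k}{\bff\omega_k}=\inpro{\bff C_k}{\curl\bff\omega_k}$. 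The left side is a weak--strong pairing and the right side a strong--strong pairing, so in the limit $\inpro{\bff K}{\bff\omega}=\inpro{\bff C}{\curl\bff\omega}$, that is, $\bff K=\curl\bff C$ in distributions. The compactness result already places $\bff C$ in $\hcurl\cap\hzerodiv$, so $\bff C$ lies in the class for which Lemma~\ref{lem:conv reconstruction consistency} was proved. This closes the contradiction.
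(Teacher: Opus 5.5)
Your proposal is correct and follows essentially the same route as the paper: define $\varepsilon_{\mathcal R}(h)$ as a supremum, argue by contradiction, extract limits via the discrete Maxwell compactness, identify the weak limits of $\curlh\bff C_k$ and $\divg\bff C_k$, and then invoke Lemma~\ref{lem:conv reconstruction consistency}. The differences are cosmetic. You take the supremum over the unit ball rather than of a ratio, and you identify the curl by testing with $C_c^\infty$ fields and the canonical N\'ed\'elec interpolant, whereas the paper uses arbitrary $\hzerocurl$ fields approximated in $\hcurl$.
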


\begin{proof}
	For \(h>0\), define
	\[
	\varepsilon_{\mathcal R}(h)
	:=
	\sup_{\bff C_h\in\bb{RT}_h^0\setminus\{0\}}
	\frac{
		\norm{\mathcal R_h\bff C_h-\bff C_h}{\bb L^2}
	}{
		\norm{\bff C_h}{\bb L^2}
		+
		\norm{\curlh\bff C_h}{\bb L^2}
		+
		\norm{\divg\bff C_h}{L^2}
	}.
	\]
	By the stability estimate \eqref{equ:Maxwell reconstruction stability}, this
	quantity is finite. We prove that $\varepsilon_{\mathcal R}(h)\to 0$ as $h\to 0$.
	
	Suppose not. Then there exist \(\varepsilon_0>0\), a sequence \(h_k\to0\),
	and functions \(\bff C_k\in\bb{RT}_{h_k}^0\) such that
	\begin{align}
		\label{equ:eps contradiction normalization}
		\norm{\bff C_k}{\bb L^2}
		+
		\norm{\curl_{h_k}\bff C_k}{\bb L^2}
		+
		\norm{\divg\bff C_k}{L^2}
		=1,
	\end{align}
	but
	\begin{align}
		\label{equ:eps contradiction lower bound}
		\norm{\mathcal R_{h_k}\bff C_k-\bff C_k}{\bb L^2}
		\ge
		\varepsilon_0.
	\end{align}
	Set $\bff J_k:=\curl_{h_k}\bff C_k$ and $D_k:=\divg\bff C_k$.
	By \eqref{equ:eps contradiction normalization}, the sequences
	\(\{\bff C_k\}\), \(\{\bff J_k\}\), and \(\{D_k\}\) are bounded in
	\(\bb L^2\), \(\bb L^2\), and \(L^2\), respectively. By the discrete Maxwell
	compactness property, there exists a subsequence, not relabelled, and a field
	\(\bff C\) such that
	\[
	\bff C_k\to\bff C
	\qquad\text{strongly in }\bb L^2(\mathscr D).
	\]
	Moreover, up to a further subsequence,
	\[
	\bff J_k\rightharpoonup\bff J
	\quad\text{weakly in }\bb L^2(\mathscr D),
	\qquad
	D_k\rightharpoonup D
	\quad\text{weakly in }L^2(\mathscr D).
	\]
	
	We claim that $\bff J=\curl\bff C$ and $D=\divg\bff C$
	in the sense of distributions. The second identity follows immediately from
	\(D_k=\divg\bff C_k\), the strong convergence
	\(\bff C_k\to\bff C\) in \(\bb L^2\), and the weak convergence
	\(D_k\rightharpoonup D\) in \(L^2\).
	
	For the first identity, let $\bff\omega\in \hzerocurl$. By the approximation property of the space $\bb X_h^0$, there exists
	\(\bff\omega_{h_k}\in\bb X_{h_k}^0\) such that
	\[
	\bff\omega_{h_k}\to\bff\omega
	\qquad\text{strongly in } \hcurl.
	\]
	By the definition of $\curl_{h_k}$, we have
	\[
	\inpro{\bff J_k}{\bff\omega_{h_k}}
	=
	\inpro{\bff C_k}{\curl\bff\omega_{h_k}}.
	\]
	Passing to the limit gives $\inpro{\bff J}{\bff\omega}= \inpro{\bff C}{\curl\bff\omega}$.
	Therefore, $\bff J=\curl\bff C$.
	
	We can now apply Lemma~\ref{lem:conv reconstruction consistency} to the
	sequence \(\{\bff C_k\}\). Its hypotheses are satisfied with
	$\bff K=\curl\bff C$ and  $D=\divg\bff C$.
	Consequently,
	\[
	\mathcal R_{h_k}\bff C_k\to\bff C
	\qquad\text{strongly in }\bb L^2(\mathscr D).
	\]
	Since also \(\bff C_k\to\bff C\) strongly in \(\bb L^2(\mathscr D)\), we then have
	$\norm{\mathcal R_{h_k}\bff C_k-\bff C_k}{\bb L^2}\to0$,
	which contradicts \eqref{equ:eps contradiction lower bound}. Hence
	$\varepsilon_{\mathcal R}(h)\to0$, and thus
	\eqref{equ:Rh L2 consistency asymptotic} follows.
\end{proof}

\begin{lemma}[Negative-norm bound for the magnetic noise coefficient]
	\label{lem:magnetic noise negative coefficient}
	Let \(m\ge3\), and let \(\varepsilon_{\mathcal R}(h)\) be defined by
	Lemma~\ref{lem:Maxwell reconstruction L2 consistency}.  Set
	\begin{equation}\label{equ:vartheta}
		\vartheta_h:=h+\varepsilon_{\mathcal R}(h).
	\end{equation}
	Then we have
	\begin{align}
		\label{equ:magnetic negative coeff bound}
		\sum_{i=1}^{\infty}
		\norm{\curl\Pi_h^{\bb X}
			\bigl(
			\sigma\bff\chi_i
			+
			\eta\bff\chi_i\times\mathcal R_h\bff B_h^{n-1}
			\bigr)}{\widetilde{\bb H}^{-m}}^2
		\le
		C\left(
		1+\norm{\bff B_h^{n-1}}{\bb L^2}^{2}
		\right)
		+
		C\vartheta_h^2
		\norm{\bff J_h^{n-1}}{\bb L^2}^{2}.
	\end{align}
\end{lemma}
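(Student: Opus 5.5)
We bound the negative norm by duality, following the proof of \eqref{equ:PiX negative curl stability}, and then split the Hall part into a term involving $\bff B_h^{n-1}$ itself and a reconstruction defect controlled by Lemma~\ref{lem:Maxwell reconstruction L2 consistency}.

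For $\bff z\in\bb H^m$ with $\norm{\bff z}{\bb H^m}\le1$, the integration-by-parts identity in that proof gives
\[
\inpro{\curl\Pi_h^{\bb X}\bff Z}{\bff z}=\inpro{\Pi_h^{\bb X}\bff Z}{\curl\bff z}=\inpro{\bff Z}{\Pi_h^{\bb X}\curl\bff z},
\]
where $\Pi_h^{\bb X}$ is self-adjoint. This immediately yields $\norm{\curl\Pi_h^{\bb X}\bff Z}{\widetilde{\bb H}^{-m}}\le C\norm{\bff Z}{\bb L^2}$, since $\norm{\Pi_h^{\bb X}\curl\bff z}{\bb L^2}\le C$. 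We apply this with $\bff Z=\sigma\bff\chi_i+\eta\bff\chi_i\times\mathcal R_h\bff B_h^{n-1}$. Before doing so, we split
\[
\bff Z=\sigma\bff\chi_i+\eta\bff\chi_i\times\bff B_h^{n-1}+\eta\bff\chi_i\times\bigl(\mathcal R_h\bff B_h^{n-1}-\bff B_h^{n-1}\bigr).
\]
The first two pieces contribute at most $C\norm{\bff\chi_i}{\bb L^\infty}^2\bigl(1+\norm{\bff B_h^{n-1}}{\bb L^2}^2\bigr)$. Summing over $i$ with \eqref{equ:chi assumption} gives the term $C(1+\norm{\bff B_h^{n-1}}{\bb L^2}^2)$.

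The defect term contributes $C\eta^2\norm{\bff\chi_i}{\bb L^\infty}^2\norm{\mathcal R_h\bff B_h^{n-1}-\bff B_h^{n-1}}{\bb L^2}^2$. We estimate it by \eqref{equ:Rh L2 consistency asymptotic}. Since $\divg\bff B_h^{n-1}=0$ by \eqref{equ:div B exact revised}, and since \eqref{equ:stoch fem J} gives $\curlh\bff B_h^{n-1}=\Pi_h^{\bb X}\bff J_h^{n-1}$ and hence $\norm{\curlh\bff B_h^{n-1}}{\bb L^2}\le\norm{\bff J_h^{n-1}}{\bb L^2}$, we obtain
\[
\norm{\mathcal R_h\bff B_h^{n-1}-\bff B_h^{n-1}}{\bb L^2}\le\varepsilon_{\mathcal R}(h)\bigl(\norm{\bff B_h^{n-1}}{\bb L^2}+\norm{\bff J_h^{n-1}}{\bb L^2}\bigr).
\]
Squaring, summing over $i$, and absorbing the $\norm{\bff B_h^{n-1}}{\bb L^2}^2$ part into the first term (since $\varepsilon_{\mathcal R}$ is bounded) gives the $C\varepsilon_{\mathcal R}(h)^2\norm{\bff J_h^{n-1}}{\bb L^2}^2$ contribution. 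This is bounded by $C\vartheta_h^2\norm{\bff J_h^{n-1}}{\bb L^2}^2$.

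In this route the $h$ part of $\vartheta_h$ is not needed. An alternative route compares $\Pi_h^{\bb X}$-weighted terms with their continuous counterparts using \eqref{equ:conv magnetic projection estimates}, and there the extra factor $h$ would enter; with the self-adjointness trick it does not.

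The main point to check is the use of the discrete curl: the hypotheses of Lemma~\ref{lem:Maxwell reconstruction L2 consistency} involve $\curlh\bff B_h^{n-1}$, not $\bff J_h^{n-1}$. We therefore need the relation $\curlh\bff B_h^{n-1}=\Pi_h^{\bb X}\bff J_h^{n-1}$ from \eqref{equ:stoch fem J}, and for $n=1$ the assumption $\Pi_h^{\bb X}\bff J_h^0=\curlh\bff B_h^0$. The $\bb L^2$-stability of $\Pi_h^{\bb X}$ is also used to bound $\norm{\Pi_h^{\bb X}\curl\bff z}{\bb L^2}$ uniformly in $h$.
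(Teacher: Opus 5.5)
Your proof is correct and follows essentially the paper's argument: duality against $\bff z\in\bb H^m$ with integration by parts, the $\bb L^2$-stability of $\Pi_h^{\bb X}$, the same three-term splitting of $\sigma\bff\chi_i+\eta\bff\chi_i\times\mathcal R_h\bff B_h^{n-1}$, and Lemma~\ref{lem:Maxwell reconstruction L2 consistency} applied via $\divg\bff B_h^{n-1}=0$ and $\curlh\bff B_h^{n-1}=\Pi_h^{\bb X}\bff J_h^{n-1}$. The differences are cosmetic. You move $\Pi_h^{\bb X}$ onto $\curl\bff z$ by self-adjointness instead of bounding $\norm{\Pi_h^{\bb X}\bff Z}{\bb L^2}\le\norm{\bff Z}{\bb L^2}$, and you absorb $\varepsilon_{\mathcal R}(h)\norm{\bff B_h^{n-1}}{\bb L^2}$ using the uniform bound $\varepsilon_{\mathcal R}(h)\le C$ from \eqref{equ:Maxwell reconstruction stability} rather than taking $h$ small; as in the paper, the $h$ part of $\vartheta_h$ is never used.
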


\begin{proof}
	Let $\bff\psi\in\bb H^m(\mathscr D)$ with
	$\norm{\bff\psi}{\bb H^m}\le1$. Set $\bff Z_{i,h}^{n-1}
	:=
	\sigma\bff\chi_i
	+
	\eta\bff\chi_i\times\mathcal R_h\bff B_h^{n-1}$ and $\bff H_{i,h}^{n-1}
	:=
	\Pi_h^{\bb X}\bff Z_{i,h}^{n-1}$.
	Since $\bff H_{i,h}^{n-1}\in\bb X_h^0\subset\hzerocurl$, integration by parts
	gives
	\[
	\inpro{\curl\bff H_{i,h}^{n-1}}{\bff\psi}
	=
	\inpro{\bff H_{i,h}^{n-1}}{\curl\bff\psi}.
	\]
	Using the $\bb L^2$-stability of $\Pi_h^{\bb X}$, we obtain
	\[
	\abs{
		\inpro{\curl\bff H_{i,h}^{n-1}}{\bff\psi}
	}
	\le
	\norm{\Pi_h^{\bb X}\bff Z_{i,h}^{n-1}}{\bb L^2}
	\norm{\curl\bff\psi}{\bb L^2}
	\le
	C\norm{\bff Z_{i,h}^{n-1}}{\bb L^2}.
	\]
	Now write
	\[
	\bff Z_{i,h}^{n-1}
	=
	\sigma\bff\chi_i
	+
	\eta\bff\chi_i\times\bff B_h^{n-1}
	+
	\eta\bff\chi_i\times
	\left(
	\mathcal R_h\bff B_h^{n-1}-\bff B_h^{n-1}
	\right).
	\]
	By Lemma~\ref{lem:Maxwell reconstruction L2 consistency}, since
	$\divg\bff B_h^{n-1}=0$ and
	$\Pi_h^{\bb{X}} \bff J_h^{n-1}=\curlh\bff B_h^{n-1}$, we have
	\[
	\norm{\mathcal R_h\bff B_h^{n-1}-\bff B_h^{n-1}}{\bb L^2}
	\le
	\varepsilon_{\mathcal R}(h)
	\left(
	\norm{\bff B_h^{n-1}}{\bb L^2}
	+
	\norm{\bff J_h^{n-1}}{\bb L^2}
	\right).
	\]
	Therefore, for $h$ sufficiently small,
	\[
	\norm{\bff Z_{i,h}^{n-1}}{\bb L^2}
	\le
	C\norm{\bff\chi_i}{\bb W^{1,\infty}}
	\left(
	1+\norm{\bff B_h^{n-1}}{\bb L^2}
	+
	\varepsilon_{\mathcal R}(h)
	\norm{\bff J_h^{n-1}}{\bb L^2}
	\right).
	\]
	Taking the supremum over
	$\bff\psi\in\bb H^m$ with $\norm{\bff\psi}{\bb H^m}\le1$, squaring, and
	summing in $i$, we get
	\[
	\sum_{i=1}^{\infty}
	\norm{\curl\bff H_{i,h}^{n-1}}{\widetilde{\bb H}^{-m}}^2
	\le
	C\left(
	1+\norm{\bff B_h^{n-1}}{\bb L^2}^{2}
	\right)
	+
	C\varepsilon_{\mathcal R}(h)^2
	\norm{\bff J_h^{n-1}}{\bb L^2}^{2},
	\]
	where we also used \eqref{equ:PiX negative curl stability}.
	This implies
	\eqref{equ:magnetic negative coeff bound}, as required.
\end{proof}

\section{Time-increment estimates and tightness of laws}

For a Banach space $E$, a continuous function $v\in C([0,T];E)$, and
$\delta>0$, define the modulus of continuity $\mathfrak m_E(v,\delta)$ by
\[ 
\mathfrak m_E(v,\delta)
:=
\sup_{\substack{s,t\in[0,T]\\ |t-s|\le\delta}}
\norm{v(t)-v(s)}{E}.
\]
In the following lemmas, we derive suitable time-increment estimates which are necessary to prove tightness of laws.

\begin{lemma}[Maximal block increment estimates]
	\label{lem:conv maximal block increments}
	Let $m\ge 3$, and let $\vartheta_h$ be defined by \eqref{equ:vartheta}.
	Assume the hypotheses of Proposition~\ref{prop:stoch stability revised} such that \eqref{equ:stoch stability revised main} holds. Then there exists $C>0$, independent of $h,\tau,a,b$, such that for every
	$0\le a\le b\le N$,
	\begin{align}
		\label{equ:maximal block u}
		\bb E
		\left[
		\max_{a\le n\le \ell\le b}
		\norm{\bff u_h^\ell-\bff u_h^n}{(\bb V^m_\sigma)'}^{4}
		\right]
		&\le
		C\left(((b-a)\tau)^2+\vartheta_h^4\right),
		\\
		\label{equ:maximal block B}
		\bb E
		\left[
		\max_{a\le n\le \ell\le b}
		\norm{\bff B_h^\ell-\bff B_h^n}{\widetilde{\bb H}^{-m}}^{4}
		\right]
		&\le
		C\left(((b-a)\tau)^2+\vartheta_h^4\right).
	\end{align}
\end{lemma}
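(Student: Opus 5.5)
The plan is to write each increment as a sum of a deterministic drift part and a discrete martingale part, exactly as in the proof of Lemma~\ref{lem:conv time translation}, and then to estimate the two parts separately in the fourth moment. For the velocity, fix $\bff\phi\in\bb V_\sigma^m$ with $\norm{\bff\phi}{\bb H^m}\le1$, and let $\bff\phi_h=\mathcal S_h\bff\phi$. Summing \eqref{equ:stoch fem u} from $r=n+1$ to $\ell$ gives \eqref{equ:conv velocity increment testing} with $n+\ell$ replaced by $\ell$. The projection error term $\inpro{\bff u_h^\ell-\bff u_h^n}{\bff\phi-\bff\phi_h}$ is bounded by $Ch\max_j\norm{\bff u_h^j}{\bb L^2}$. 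Its fourth moment is then $\le Ch^4\le C\vartheta_h^4$ by \eqref{equ:stoch stability revised main} with $p=2$. The drift terms are bounded, uniformly in $a\le n\le\ell\le b$, by the block sums
\[
\sum_{r=a+1}^{b}\tau\Bigl(\norm{\nabla\bff u_h^r}{\bb L^2}(1+\norm{\bff u_h^{r-1}}{\bb L^2})+\norm{\bff J_h^r}{\bb L^2}\norm{\bff B_h^{r-1}}{\bb L^2}+\norm{\bff f^r}{\widetilde{\bb H}^{-1}}\Bigr).
\]
This bound uses \eqref{equ:phi bound}. By Cauchy--Schwarz, the square of this quantity is $\le (b-a)\tau\,(1+\max_j\norm{\cdot}{\bb L^2}^2)\,D_N$. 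Raising to the power $2$ and using H\"older together with \eqref{equ:stoch stability revised main} for $p=4$ then gives $C((b-a)\tau)^2$.

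For the martingale part, I use the fact that $\norm{\mathcal S_h\bff\phi}{\bb L^2}\le C$. With this, the supremum over $\bff\phi$ is controlled by the $\bb L^2$ norm of the partial sums $\sum_{r=n+1}^{\ell}\bff G_{u,h}^r$. I then write such a partial sum as a difference of two partial sums starting at $a$. This gives $\max_{a\le n\le\ell\le b}\le 2\max_{a\le k\le b}\norm{\sum_{r=a+1}^{k}\bff G_{u,h}^r}{\bb L^2}$. Next I apply Lemma~\ref{lem:discrete-BDG} in $H=\bb L^2$ with $p=4$. The right-hand side becomes $\bb E[(\sum_{r=a+1}^b\norm{\bff G_{u,h}^r}{\bb L^2}^2)^2]$. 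The argument of Step~5 of Proposition~\ref{prop:stoch stability revised}, restricted to the block, bounds this by $C\bb E[(\sum_{r=a+1}^b\tau(1+\mathcal E_h^{r-1}))^2]\le C((b-a)\tau)^2$. Here I use \eqref{equ:g growth} and the moment bounds.

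For the magnetic field, I test with $\bff\psi\in\bb H^m$ and proceed as in \eqref{equ:conv magnetic increment testing}--\eqref{equ:conv curlE bound}. The $\bb{RT}$ projection is exact, so no $h$-error arises in the drift, and the drift block sums are handled exactly as for the velocity. The main obstacle is the stochastic magnetic term. Bounding it in $\bb L^2$ as before produces $\delta_\chi\sum\tau\norm{\bff J_h^{r-1}}{\bb L^2}^2$. That quantity does not scale like $(b-a)\tau$, since $\bff J$ is only $L^2$ in time. To avoid this, I will instead apply Lemma~\ref{lem:discrete-BDG} in the Hilbert space $H=\widetilde{\bb H}^{-m}$, to the sum $\sum_r\curl\bff H_{i,h}^{r-1}\Delta_r\beta_i^B$, using Lemma~\ref{lem:magnetic noise negative coefficient}. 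This gives
\[
\bb E\Bigl[\Bigl(\sum_{r=a+1}^b\tau\bigl(C(1+\norm{\bff B_h^{r-1}}{\bb L^2}^2)+C\vartheta_h^2\norm{\bff J_h^{r-1}}{\bb L^2}^2\bigr)\Bigr)^2\Bigr].
\]
The first part is $\le C((b-a)\tau)^2$. The second part is $\le C\vartheta_h^4\,\bb E[(\sum_r\tau\norm{\bff J_h^{r-1}}{\bb L^2}^2)^2]\le C\vartheta_h^4$, using \eqref{equ:stoch stability revised main} and $\sup_h\norm{\bff J_h^0}{\bb L^2}<\infty$.

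One technical point is whether $\widetilde{\bb H}^{-m}$, the dual of $\bb H^m$, is a Hilbert space. It is, via the Riesz isomorphism, and the conditional Gaussian/It\^o isometry bound in it follows by the same argument as in Step~5. Combining the drift and martingale estimates yields \eqref{equ:maximal block B}.
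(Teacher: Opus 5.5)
Your proposal is correct and follows essentially the same route as the paper. You use the pathwise increment bound from Lemma~\ref{lem:conv time translation}, block Cauchy--Schwarz for the drift together with the $p=4$ moment bounds, a $Ch^4$ projection error, and the discrete BDG inequality for the noise. For the magnetic field you apply BDG in $\widetilde{\bb H}^{-m}$ together with Lemma~\ref{lem:magnetic noise negative coefficient}, exactly to avoid the non-scaling term $\delta_\chi\sum\tau\norm{\bff J_h^{r-1}}{\bb L^2}^2$, as the paper does. The one step you leave implicit is the identity $\inpro{\curl\bff H_{i,h}^{r-1}}{\Pi_h^{\bb{RT}}\bff\psi}=\inpro{\curl\bff H_{i,h}^{r-1}}{\bff\psi}$, which holds because $\curl\bff H_{i,h}^{r-1}\in\bb{RT}_h^0$; it is what identifies the supremum over $\bff\psi$ with the $\widetilde{\bb H}^{-m}$ norm of the martingale.
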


\begin{proof}
	We first prove the velocity estimate, then indicate the corresponding
	argument for the magnetic field.
	Let $a\le n\le \ell\le b$. Repeating the argument leading to
	\eqref{equ:conv velocity increment bound}, with the sum taken from $r=n+1$ to $r=\ell$, gives
	\begin{align}
		\label{equ:block u pathwise bound}
		\norm{\bff u_h^\ell-\bff u_h^n}{(\bb V^m_\sigma)'}
		&\le
		Ch\left(
		\norm{\bff u_h^\ell}{\bb L^2}
		+
		\norm{\bff u_h^n}{\bb L^2}
		\right)
		+
		C\sum_{r=n+1}^{\ell}\tau\norm{\nabla\bff u_h^r}{\bb L^2}
		\nonumber\\
		&\quad
		+
		C\sum_{r=n+1}^{\ell}\tau
		\norm{\bff u_h^{r-1}}{\bb L^2}
		\norm{\nabla\bff u_h^r}{\bb L^2}
		+
		C\sum_{r=n+1}^{m}\tau
		\norm{\bff J_h^r}{\bb L^2}
		\norm{\bff B_h^{r-1}}{\bb L^2}
		\nonumber\\
		&\quad
		+
		C\sum_{r=n+1}^{\ell}\tau\norm{\bff f^r}{\bb H^{-1}}
		+
		C\sup_{\substack{\bff\phi\in\bb V_\sigma^m\\
				\norm{\bff\phi}{\bb H^m}\le1}}
		\abs{
			\sum_{r=n+1}^{\ell}\sum_{i=1}^{\infty}
			\inpro{\bff g_{i,h}^{r-1}}{\mathcal S_h\bff\phi}
			\Delta_r\beta_i^u
		}.
	\end{align}
	Taking the maximum over $a\le n\le \ell\le b$, the deterministic sums are bounded by the corresponding sums over the full block $r=a+1,\ldots,b$.
	Using Proposition~\ref{prop:stoch stability revised} and an elementary inequality
	\[
	\left(
	\sum_{r=a+1}^{b}\tau A_r
	\right)^4
	\le
	((b-a)\tau)^2
	\left(
	\sum_{r=a+1}^{b}\tau A_r^2
	\right)^2,
	\]
	the deterministic terms
	in \eqref{equ:block u pathwise bound} contribute at most
	\(C((b-a)\tau)^2\) after taking expectations. After taking the fourth power, the projection error gives
	\[
	C h^4
	\bb E
	\max_{0\le r\le N}\norm{\bff u_h^r}{\bb L^2}^{4}
	\le
	Ch^4
	\le
	C\vartheta_h^4.
	\]
	It remains to control the stochastic term. Define the $\bb L^2$-valued
	martingale
	\[
	\bff M_\ell^u
	:=
	\sum_{r=a+1}^{\ell}\sum_{i=1}^{\infty}
	\bff g_{i,h}^{r-1}\Delta_r\beta_i^u,
	\qquad \ell =a,\ldots,b.
	\]
	By \eqref{equ:Stokes bdd}, we have
	\[
	\sup_{\substack{\bff\phi\in\bb V_\sigma^m\\
			\norm{\bff\phi}{\bb H^m}\le1}}
	\abs{
		\sum_{r=n+1}^{\ell}
		\sum_{i=1}^{\infty}
		\inpro{\bff g_{i,h}^{r-1}}{\mathcal S_h\bff\phi}
		\Delta_r\beta_i^u
	}
	\le
	C\norm{\bff M_\ell^u-\bff M_n^u}{\bb L^2}.
	\]
	Moreover,
	\[
	\max_{a\le n\le \ell\le b}
	\norm{\bff M_\ell^u-\bff M_n^u}{\bb L^2}
	\le
	2\max_{a\le \ell\le b}
	\norm{\bff M_\ell^u-\bff M_a^u}{\bb L^2}.
	\]
	By the discrete Burkholder--Davis--Gundy inequality in $\bb{L}^2$ and the conditional Gaussian moment estimate,
	\begin{align}\label{equ:max M bdg}
		\bb E
		\left[
		\max_{a\le n\le \ell \le b}
		\norm{\bff M_\ell^u-\bff M_n^u}{\bb L^2}^{4}
		\right]
		&\le
		C
		\bb E
		\left[
		\sum_{r=a+1}^{b}\tau
		\sum_{i=1}^{\infty}
		\norm{\bff g_{i,h}^{r-1}}{\bb L^2}^{2}
		\right]^2
		\le
		C\bigl((b-a)\tau\bigr)^2.
	\end{align}
	
	For the magnetic field, the estimate leading to \eqref{equ:Bn H minus m sq}
	gives, for $a\le n\le \ell\le b$,
	\begin{align*}
		\norm{\bff B_h^\ell-\bff B_h^n}{\widetilde{\bb H}^{-m}}
		&\le
		C\sum_{r=n+1}^{\ell}\tau\norm{\bff J_h^r}{\bb L^2}
		+
		C\sum_{r=n+1}^{\ell}\tau
		\norm{\bff J_h^r}{\bb L^2}
		\norm{\bff B_h^{r-1}}{\bb L^2}
		\\
		&\quad
		+
		C\sum_{r=n+1}^{\ell}\tau
		\norm{\bff u_h^r}{\bb L^2}
		\norm{\bff B_h^{r-1}}{\bb L^2}
		+
		\norm{
			\sum_{r=n+1}^{\ell}\sum_{i=1}^{\infty}
			\curl\bff H_{i,h}^{r-1}\Delta_r\beta_i^B
		}{\widetilde{\bb H}^{-m}}.
	\end{align*}
	The deterministic terms are handled as above. 
	For the stochastic term, we define the \(\widetilde{\bb H}^{-m}\)-valued martingale
	\[
	\bff M_\ell^B
	:=
	\sum_{r=a+1}^{\ell}\sum_{i=1}^{\infty}
	\curl\bff H_{i,h}^{r-1}\Delta_r\beta_i^B,
	\qquad \ell=a,\ldots,b.
	\]
	Then, as before,
	\[
	\max_{a\le n\le \ell\le b}
	\norm{\bff M_\ell^B-\bff M_n^B}{\widetilde{\bb H}^{-m}}
	\le
	2\max_{a\le m\le b}
	\norm{\bff M_\ell^B-\bff M_a^B}{\widetilde{\bb H}^{-m}}.
	\]
	By the same argument as in \eqref{equ:max M bdg}, we obtain
	\begin{align*}
		\bb E
		\left[
		\max_{a\le n\le \ell\le b}
		\norm{\bff M_\ell^B-\bff M_n^B}{\widetilde{\bb H}^{-m}}^{4}
		\right]
		&\le
		C
		\bb E
		\left[
		\sum_{r=a+1}^{b}\tau
		\sum_{i=1}^{\infty}
		\norm{\curl\bff H_{i,h}^{r-1}}{\widetilde{\bb H}^{-m}}^{2}
		\right]^2
		\\
		&\le
		C
		\bb E
		\left[
		\sum_{r=a+1}^{b}\tau
		\left(
		1+\norm{\bff B_h^{r-1}}{\bb L^2}^2
		\right)
		+
		\vartheta_h^2
		\sum_{r=a+1}^{b}\tau
		\norm{\bff J_h^{r-1}}{\bb L^2}^2
		\right]^2
		\\
		&\le
		C((b-a)\tau)^2
		+
		C \vartheta_h^4
		\bb E
		\left[
		\sum_{r=1}^{N}\tau
		\norm{\bff J_h^{r-1}}{\bb L^2}^2
		\right]^2
		\\
		&\le
		C\left(((b-a)\tau)^2+ \vartheta_h^4\right).
	\end{align*}
	This proves \eqref{equ:maximal block B}, thus completing the proof of the lemma.
\end{proof}

\begin{lemma}[Stochastic equicontinuity in negative norms]
	\label{lem:stochastic equicontinuity negative}
	Let \(m\ge3\). Then, for every \(\eta>0\),
	\begin{align}
		\label{equ:stoch equicont u negative}
		\lim_{\delta\downarrow0}
		\limsup_{h,\tau\to0}\,
		\bb P\left(
		\mathfrak m_{(\bb V^m_\sigma)'}
		(\bff u_{h,\tau},\delta)>\eta
		\right)
		&=0,
		\\
		\label{equ:stoch equicont B negative}
		\lim_{\delta\downarrow0}
		\limsup_{h,\tau\to0}\,
		\bb P\left(
		\mathfrak m_{\widetilde{\bb H}^{-m}}
		(\bff B_{h,\tau},\delta)>\eta
		\right)
		&=0.
	\end{align}
\end{lemma}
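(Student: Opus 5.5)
The plan is to derive the stochastic equicontinuity from the maximal block increment estimates of Lemma~\ref{lem:conv maximal block increments}. We cover $[0,T]$ by grid-aligned blocks and apply a union bound with Markov's inequality. We use that the affine interpolant $\bff u_{h,\tau}$ has its increments in time controlled by the nodal increments: for $s<t$ with $|t-s|\le\delta$, the points $s,t$ lie in grid intervals whose nodes are contained in at most two consecutive blocks. Since the interpolant is piecewise affine, $\norm{\bff u_{h,\tau}(t)-\bff u_{h,\tau}(s)}{(\bb V^m_\sigma)'}$ is bounded by a maximum of nodal differences $\norm{\bff u_h^\ell-\bff u_h^n}{(\bb V^m_\sigma)'}$ over nodes in the union of those blocks.

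Concretely, fix $\delta\ge\tau$ and let $K:=\lceil \delta/\tau\rceil$, so that $K\tau\le 2\delta$. Partition the indices $\{0,\ldots,N\}$ into consecutive blocks $I_j=\{jK,\ldots,(j+1)K\}\cap\{0,\ldots,N\}$, for $j=0,\ldots,J-1$ with $J\le \lceil N/K\rceil\le CT/\delta$. Any pair $s,t$ with $|t-s|\le\delta$ involves nodes in $I_j\cup I_{j+1}$ for some $j$. Hence
\[
\mathfrak m_{(\bb V^m_\sigma)'}(\bff u_{h,\tau},\delta)\le 2\max_{j}\max_{n,\ell\in I_j\cup I_{j+1}}\norm{\bff u_h^\ell-\bff u_h^n}{(\bb V^m_\sigma)'}.
\]
Applying the union bound, Markov's inequality with fourth moments, and \eqref{equ:maximal block u} with $b-a\le 2K$ yields
\[
\bb P\bigl(\mathfrak m(\bff u_{h,\tau},\delta)>\eta\bigr)\le \sum_j \frac{C}{\eta^4}\bigl((4\delta)^2+\vartheta_h^4\bigr)\le \frac{C T}{\eta^4\delta}\bigl(\delta^2+\vartheta_h^4\bigr).
\]
Taking $\limsup_{h,\tau\to0}$ uses $\vartheta_h\to0$, which holds by Lemma~\ref{lem:Maxwell reconstruction L2 consistency}. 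This leaves $CT\delta/\eta^4$, which vanishes as $\delta\downarrow0$. The case $\delta<\tau$ does not arise in the limit since $\tau\to0$ first. Alternatively, for $\delta<\tau$ the modulus is bounded by a single-step increment, which is covered by the block estimate with $b-a=1$. The magnetic statement \eqref{equ:stoch equicont B negative} follows identically from \eqref{equ:maximal block B}.

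The main obstacle is the exponent bookkeeping. The union bound costs a factor $J\sim T/\delta$, so we need a block estimate of order $((b-a)\tau)^{1+\epsilon}$; this is exactly why fourth moments, giving $((b-a)\tau)^2$, are used rather than second moments. The $\vartheta_h^4$ term also carries the factor $1/\delta$. This is harmless because $h\to0$ is taken before $\delta\downarrow0$, but it is the reason the order of limits in the statement matters. A minor point to check is that the affine interpolation between nodes does not enlarge the supremum beyond nodal differences. This holds because the difference of convex combinations of nodal values is bounded by the maximal pairwise nodal difference.
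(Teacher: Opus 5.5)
Your proposal is correct and follows essentially the same route as the paper: cover $[0,T]$ by $O(T/\delta)$ overlapping blocks, bound the modulus of the affine interpolant by maximal nodal differences via convex combinations, and then combine the union bound, a fourth-moment Chebyshev inequality, and the maximal block estimate to get a bound of order $(\delta^2+\vartheta_h^4)/\delta$, taking $\limsup_{h,\tau\to0}$ before $\delta\downarrow0$. The only difference is cosmetic. You use grid-aligned index blocks of $\lceil\delta/\tau\rceil$ steps, which needs $\tau\le\delta$ (harmless, since $\tau\to0$ first). The paper instead covers by time intervals $[(q-1)\delta,(q+2)\delta]$ and carries a $(\delta+\tau)^2$ term, so it needs no such restriction.
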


\begin{proof}
	We prove \eqref{equ:stoch equicont u negative}. The proof of \eqref{equ:stoch equicont B negative} is analogous.
	
	Fix $0<\delta\le T$, and set $M_\delta:=\left\lceil T/\delta \right\rceil$.
	For $q=0,\ldots,M_\delta-1$, define
	\[
	I_q:=[q\delta,(q+1)\delta]\cap[0,T],
	\]
	so that $\{I_q\}_{q=0}^{M_\delta-1}$ covers $[0,T]$. Moreover, $M_\delta\le \frac{T}{\delta}+1\le \frac{2T}{\delta}$.
	Let
	\[
	I_q^\ast:=[(q-1)\delta,(q+2)\delta]\cap[0,T].
	\]
	If $s,t\in[0,T]$ and $|t-s|\le\delta$, then there exists
	$q\in\{0,\ldots,M_\delta-1\}$ such that $s,t\in I_q^\ast$.
	Now, write $I_q^\ast=[\alpha_q,\beta_q]$ and define
	\[
	a_q:=\max\left\{0,\left\lfloor\frac{\alpha_q}{\tau}\right\rfloor-1\right\},
	\qquad
	b_q:=\min\left\{N,\left\lceil\frac{\beta_q}{\tau}\right\rceil+1\right\}.
	\]
	Then every value $\bff u_{h,\tau}(t)$ with $t\in I_q^\ast$ is a convex
	combination of $\bff u_h^r$ with
	$r\in\{a_q,\ldots,b_q\}$. Moreover,
	\begin{equation}\label{equ:bq aq tau}
		(b_q-a_q)\tau
		\le
		(\beta_q-\alpha_q)+4\tau
		\le
		3\delta+4\tau
		\le
		4(\delta+\tau).
	\end{equation}
	Since $\bff u_{h,\tau}$ is affine on every interval $[t_{r-1},t_r]$, for
	$s,t\in I_q^\ast$ we may write
	\[
	\bff u_{h,\tau}(t)
	=
	\sum_{i=a_q}^{b_q}\alpha_i\bff u_h^i,
	\qquad
	\bff u_{h,\tau}(s)
	=
	\sum_{j=a_q}^{b_q}\beta_j\bff u_h^j,
	\]
	where $\alpha_i,\beta_j\ge0$ and $\sum_{i=a_q}^{b_q}\alpha_i= \sum_{j=a_q}^{b_q}\beta_j= 1$.
	Consequently,
	\[
	\sup_{\substack{s,t\in I_q^\ast\\ |t-s|\le\delta}}
	\norm{\bff u_{h,\tau}(t)-\bff u_{h,\tau}(s)}{(\bb V^m_\sigma)'}
	\le
	\max_{a_q\le n\le \ell\le b_q}
	\norm{\bff u_h^\ell-\bff u_h^n}{(\bb V^m_\sigma)'}.
	\]
	It follows by the union bound that
	\begin{align*}
		\bb P\left(
		\mathfrak m_{(\bb V^m_\sigma)'}(\bff u_{h,\tau},\delta)>\eta
		\right)
		&\le
		\sum_{q=0}^{M_\delta-1}
		\bb P\left(
		\max_{a_q\le n\le \ell\le b_q}
		\norm{\bff u_h^\ell-\bff u_h^n}{(\bb V^m_\sigma)'}
		>\eta
		\right).
	\end{align*}
	By the Chebyshev inequality, \eqref{equ:maximal block u}, and~\eqref{equ:bq aq tau}, we obtain
	\begin{align*}
		\bb P\left(
		\max_{a_q\le n\le \ell\le b_q}
		\norm{\bff u_h^\ell-\bff u_h^n}{(\bb V^m_\sigma)'}
		>\eta
		\right)
		&\le
		\frac{1}{\eta^4}\, \bb E
		\left[
		\max_{a_q\le n\le \ell\le b_q}
		\norm{\bff u_h^\ell-\bff u_h^n}{(\bb V^m_\sigma)'}^{4}
		\right]
		\\
		&\le
		C_\eta
		\left(((b_q-a_q)\tau)^2+\vartheta_h^4\right)
		\\
		&\le
		C_\eta
		\left((\delta+\tau)^2+\vartheta_h^4\right).
	\end{align*}
	Since $M_\delta\le 2T/\delta$, we have
	\[
	\bb P\left(
	\mathfrak m_{(\bb V^m_\sigma)'}(\bff u_{h,\tau},\delta)>\eta
	\right)
	\le
	C_{\eta,T}
	\frac{(\delta+\tau)^2+\vartheta_h^4}{\delta}.
	\]
	Taking first $\limsup_{h,\tau\to0}$, and using $\vartheta_h\to0$, then letting $\delta\downarrow0$ yields the claim \eqref{equ:stoch equicont u negative}.
	
	For the magnetic field, one uses \eqref{equ:maximal block B} and obtains
	\[
	\bb P\left(
	\mathfrak m_{\widetilde{\bb H}^{-m}}
	(\bff B_{h,\tau},\delta)>\eta
	\right)
	\le
	C_{\eta,T}
	\frac{(\delta+\tau)^2+\vartheta_h^4}{\delta}.
	\]
	The same limiting argument then proves
	\eqref{equ:stoch equicont B negative}.
\end{proof}

Tightness of laws can now be shown in the following lemma.

\begin{lemma}\label{lem:tightness negative path spaces}
	Let $m\ge3$ and suppose that \eqref{equ:stoch stability revised main} holds. For every sequence $(h_k,\tau_k)\to(0,0)$, the laws of
	$\{\bff u_{h_k,\tau_k}\}_{k\ge1}$ are tight on $C([0,T];(\bb{V}^m_\sigma)')$, and the laws of $\{\bff B_{h_k,\tau_k}\}_{k\ge1}$ are tight on $C([0,T];\widetilde{\bb H}^{-m})$.
\end{lemma}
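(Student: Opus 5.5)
The plan is to apply the standard Arzelà--Ascoli-type tightness criterion for laws on $C([0,T];E)$ with $E$ a separable Hilbert space. A family of laws $\{\mu_k\}$ on $C([0,T];E)$ is tight provided two conditions hold. First, for each $t$ in a dense subset of $[0,T]$, the laws of the evaluations $v(t)$ are tight in $E$. Second, the modulus-of-continuity condition holds:
\[
\lim_{\delta\downarrow0}\limsup_{k\to\infty}\bb P\bigl(\mathfrak m_E(v_k,\delta)>\eta\bigr)=0
\]
for every $\eta>0$. Since finitely many members of a sequence are individually tight, the $\limsup$ version suffices (one may shrink $\delta$ for the first finitely many indices). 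We take $E=(\bb V^m_\sigma)'$ for the velocity and $E=\widetilde{\bb H}^{-m}$ for the magnetic field. The second condition is exactly Lemma~\ref{lem:stochastic equicontinuity negative}, so what remains is pointwise tightness.

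For pointwise tightness, we use the uniform bound on the affine interpolants in $L^{2p}(\Omega;L^\infty(0,T;\bb L^2))$ recorded after \eqref{equ:conv shifted uniform bounds}, which follows from Proposition~\ref{prop:stoch stability revised}. It gives $\sup_k\bb E\bigl[\sup_{t}\norm{\bff u_{h_k,\tau_k}(t)}{\bb L^2}^2\bigr]\le C$, and likewise for $\bff B_{h_k,\tau_k}$. Here $\bb L^2$ acts on $(\bb V^m_\sigma)'$ through the pairing $\bff v\mapsto\inpro{\bff v}{\cdot}$. The embeddings $\bb V^m_\sigma\hookrightarrow\bb L^2$ and $\bb H^m\hookrightarrow\bb L^2$ are compact by Rellich, since $m\ge3$ on a bounded Lipschitz domain. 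By Schauder's theorem the adjoint maps from $\bb L^2$ into $(\bb V^m_\sigma)'$ and into $\widetilde{\bb H}^{-m}$ are therefore compact. Hence the closed ball $\{\norm{\bff v}{\bb L^2}\le R\}$ has compact image $K_R$ in $E$. Markov's inequality then gives $\bb P(\bff u_{h_k,\tau_k}(t)\notin K_R)\le C/R^2$ uniformly in $k$ and $t$, which proves pointwise tightness for every $t\in[0,T]$.

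To assemble the compact set explicitly, fix $\varepsilon>0$ and choose $R$ with $C/R^2<\varepsilon/2$. Then choose $\delta_j\downarrow0$ and $k_j$ such that $\bb P(\mathfrak m_E(v_k,\delta_j)>1/j)\le\varepsilon 2^{-j-1}$ for all $k\ge k_j$; the finitely many $k<k_j$ are handled by further shrinking $\delta_j$, since each continuous path has a modulus tending to zero almost surely. The set
\[
\mathcal K:=\bigl\{v\in C([0,T];E): v(t)\in K_R\ \forall t,\ \mathfrak m_E(v,\delta_j)\le 1/j\ \forall j\bigr\}
\]
is relatively compact in $C([0,T];E)$ by Arzelà--Ascoli and satisfies $\bb P(v_k\notin\mathcal K)\le\varepsilon$ for all $k$.

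The main obstacle is not in this assembly but in the equicontinuity input, which has already been obtained. The only delicate point is that Lemma~\ref{lem:stochastic equicontinuity negative} is a $\limsup$ statement, and the vanishing of $\vartheta_h$ is only asymptotic, coming from Lemma~\ref{lem:Maxwell reconstruction L2 consistency}. So the finite-index adjustment above must be made carefully, using the almost sure continuity of each individual path $\bff u_{h_k,\tau_k}$ and $\bff B_{h_k,\tau_k}$, which are piecewise affine.
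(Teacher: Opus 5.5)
Your proposal is correct and follows the paper's route. You get compact containment from the compactness of $\bb L^2\hookrightarrow(\bb V^m_\sigma)'$ and $\bb L^2\hookrightarrow\widetilde{\bb H}^{-m}$, combined with Chebyshev's inequality and the uniform $L^\infty_T(\bb L^2)$ moment bound, and you take stochastic equicontinuity from Lemma~\ref{lem:stochastic equicontinuity negative}. The only difference is that you carry out explicitly the Arzel\`a--Ascoli/Prokhorov assembly of the compact set, including the adjustment for finitely many indices, whereas the paper delegates this step to Billingsley's criterion.
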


\begin{proof}
	We use the compact containment and stochastic equicontinuity criterion for
	tightness in $C([0,T];E)$, where $E$ is a separable Hilbert
	space. In the real-valued case this is~\cite[Theorem~7.2--7.4]{Bil99}, where the proof is based on the Arzel\`a--Ascoli and Prokhorov theorems. It extends verbatim to separable Hilbert-valued paths once the scalar boundedness condition is replaced by compact containment. More precisely, for a family of $E$-valued continuous processes $\{X_{h,\tau}\}$, compact containment
	means that, for every $\varepsilon>0$, there exists a compact set
	$\mathcal K_\varepsilon\Subset E$ such that
	\[
	\liminf_{h,\tau\to0}
	\bb P\left(
	X_{h,\tau}(t)\in\mathcal K_\varepsilon
	\text{ for every }t\in[0,T]
	\right)
	\ge 1-\varepsilon .
	\]
	We verify compact containment for the velocity with
	$E=(\bb{V}^m_\sigma)'$. Since $\bb L^2 \Subset(\bb{V}^m_\sigma)'$,
	for every $R>0$ the set
	\[
	\mathcal K_R^u
	:=
	\overline{
		\left\{
		\bff v\in\bb L^2(\mathscr D):
		\norm{\bff v}{\bb L^2}\le R
		\right\}
	}^{\,(\bb{V}^m_\sigma)'}
	\]
	is compact in $(\bb{V}^m_\sigma)'$. By the stability estimate \eqref{equ:stoch stability revised main} and the Chebyshev inequality,
	\[
	\bb P\left(
	\sup_{t\in[0,T]}
	\norm{\bff u_{h,\tau}(t)}{\bb L^2}>R
	\right)
	\le \frac{C}{R^{2p}}.
	\]
	Choosing $R$ sufficiently large gives
	\[
	\bb P\left(
	\bff u_{h,\tau}(t)\in\mathcal K_R^u
	\text{ for every }t\in[0,T]
	\right)
	\ge 1-\frac{\varepsilon}{2},
	\]
	uniformly in $h$ and $\tau$. Hence, compact containment holds in
	$(\bb{V}^m_\sigma)'$. The stochastic equicontinuity condition in $(\bb{V}^m_\sigma)'$ is precisely~\eqref{equ:stoch equicont u negative}. Therefore the laws of
	$\{\bff u_{h,\tau}\}$ are tight in
	$C([0,T];(\bb{V}^m_\sigma)')$.
	The magnetic field is treated in an analogous manner.
\end{proof}



\end{document}